\documentclass[preprint,10pt]{elsarticle}

\usepackage{amssymb,amsmath,amsfonts,latexsym}
\usepackage{amsthm}
\usepackage{mathtools}
\usepackage{multirow,array}
\usepackage{graphicx}
\usepackage{subfigure}
\usepackage{epstopdf}
\usepackage{float} 
\usepackage{color, xcolor}
\usepackage{hyperref}

\renewcommand{\thesection}{\arabic{section}}
\newtheorem{thm}{\thesection.\arabic{theorem}}[section]
\newtheorem{Corollary}[thm]{Corollary}
\newtheorem{Lemma}[thm]{Lemma}
\newtheorem{Proposition}[thm]{Proposition}

\newtheorem{Theorem}[thm]{Theorem}
\newtheorem{Definition}[thm]{Definition}
\newtheorem{Example}[thm]{Example}
\newtheorem{Remark}[thm]{Remark}

\numberwithin{equation}{section} 

\journal{-------------}
\begin{document}
\begin{frontmatter}

\title{Trace-Controlled Operator Selection and Discretization of Continuous Frames in Hilbert \(C^*\)-Modules} 

\author[aff1]{Hicham Tarif\corref{cor1}}
\ead{hicham.tarif@uir.ac.ma}


\affiliation[aff1]{
  organization={TICLab, College of Engineering and Architecture, International University of Rabat},
  addressline={Sala Al Jadida},
  postcode={11000},
  country={Morocco}
}

\cortext[cor1]{Corresponding author}


\begin{abstract}

The discretization of continuous frames in Hilbert spaces is well established, while the corresponding problem for Hilbert $C^*$-modules presents additional difficulties arising from the $C^*$-algebra-valued inner product and the structure of compact operators on Hilbert modules. In this paper, we develop an operator-selection framework for the discretization of continuous frames in Hilbert $C^*$-modules over finite-dimensional $C^*$-algebras. Our main result is a trace-controlled selection theorem for positive compact operators acting on countably generated Hilbert $C^*$-modules over finite-dimensional $C^*$-algebras. In particular, we establish a module version of the binary selection principle for positive trace-controlled compact operators. The proof combines finite-rank approximation, the matrix representation of finite-dimensional $C^*$-algebras, and operator estimates adapted to the $C^*$-module setting. We then apply these selection results to the discretization of continuous Hilbert $C^*$-module frames. Under suitable assumptions on the underlying metric measure space, with the coefficient algebra $\mathcal A$ finite-dimensional, and assuming that every closed submodule of $\mathcal H$ is orthogonally complemented. The resulting sampling family is a Hilbert $C^*$-module frame with explicit frame bounds and controlled separation.

\end{abstract}



\begin{keyword}
$C^*-$Algebra \sep $C^*-$Hilbert module \sep Hilbert module frames \sep Binary selector\sep Hilbert module operators \sep Discretisation.



\MSC 46L08 \sep 42C15 \sep 46L05

\end{keyword}

\end{frontmatter}


\section{Introduction}

\subsection{Related work: mathematical setting}

Hilbert $C^*$-modules provide a natural framework for extending Hilbert space techniques to settings in which a possibly noncommutative $ C^*$-algebra replaces the scalar field. Since their introduction in Kaplansky~\cite{kaplansky1953modules} and subsequent systematic development~\cite{lance1995hilbert, manuilov2005hilbert}, Hilbert $C^*$-modules have become an important framework in operator algebras, operator theory, and frame theory. In particular, they provide a natural setting for studying families of vectors whose coefficients carry additional algebraic information that is not visible at the level of the underlying Hilbert space.
This perspective is particularly relevant in frame theory. A Hilbert space may admit a frame when it is regarded only as a Hilbert space, while additional structure on the space may naturally require the frame coefficients to take values in a $C^*$-algebra~\cite{han2008characterizations,xia1996vector}. Hilbert $C^*$-module frames preserve this additional structure and therefore provide a suitable framework for operator, matrix, and $C^*$-algebra-valued signal models~\cite{san2012matrix}. In this paper, we are interested in the continuous version of this theory and, more specifically, in the problem of discretizing continuous Hilbert $C^*$-module frames while preserving a prescribed geometric separation of the sampling points.

We briefly recall the basic framework.

\begin{Definition}[$C^*$-Hilbert modules~\cite{lance1995hilbert}]
Let $\mathcal A$ be a $C^*$-algebra, not necessarily unital or commutative. A \emph{pre-Hilbert $\mathcal A$-module} is a left $\mathcal A$-module $\mathcal H$ equipped with a map:
$$
\langle\cdot,\cdot\rangle_{\mathcal A}:\mathcal H\times\mathcal H\longrightarrow\mathcal A,
$$
called an $\mathcal A$-valued inner product, such that, for all $x,y,z\in\mathcal H$, $a\in\mathcal A$, and $\lambda,\mu\in\mathbb C$,
\begin{enumerate}
\item $\langle \lambda x+\mu y,z\rangle_{\mathcal A}
=\lambda\langle x,z\rangle_{\mathcal A}
+\mu\langle y,z\rangle_{\mathcal A}$;
\item $\langle ax,y\rangle_{\mathcal A}
=a\langle x,y\rangle_{\mathcal A}$;
\item $\langle y,x\rangle_{\mathcal A}
=\langle x,y\rangle_{\mathcal A}^{*}$;
\item $\langle x,x\rangle_{\mathcal A}\geq 0$ in $\mathcal A$, and
\[
\langle x,x\rangle_{\mathcal A}=0
\quad\Longleftrightarrow\quad x=0.
\]
\end{enumerate}
The induced norm is defined by
$$
\|x\|_{\mathcal H}
:=\|\langle x,x\rangle_{\mathcal A}\|_{\mathcal A}^{1/2},
\qquad x\in\mathcal H.
$$
If $\mathcal H$ is complete with respect to this norm, then $\mathcal H$ is called a \emph{Hilbert $\mathcal A$-module}.
\end{Definition}

Frames in Hilbert $C^*$-modules provide an important tool for preserving the module structure. Continuous Hilbert-module frames naturally extend this framework and allow the construction of countable discrete frames through discretization, making the discretization problem fundamental to the theory. For a countably generated Hilbert module, one defines frames as follows~\cite{ghasemi2024continuous}; see also~\cite{lfounoune2023continuous}.

\begin{Definition}
Let $(\Omega,\mu)$ be a measure space and let $\mathcal H$ be a Hilbert $\mathcal A$-module. A mapping
$$
F:\Omega\longrightarrow\mathcal H
$$
is called a \emph{continuous frame} for $\mathcal H$ if the following conditions hold:
\begin{enumerate}
\item[(i)] $F$ is weakly measurable, that is, for every $x\in\mathcal H$, the mapping
\(    \Omega\longrightarrow\mathcal A,
    \qquad
    \omega\longmapsto
    \langle x,F(\omega)\rangle_{\mathcal A}
   \)
is measurable;
\item[(ii)] there exist constants $A,B>0$ such that
\[
A\langle x,x\rangle_{\mathcal A}
\leq
\int_{\Omega}
\langle x,F(\omega)\rangle_{\mathcal A}
\langle F(\omega),x\rangle_{\mathcal A}
\,d\mu(\omega)
\leq
B\langle x,x\rangle_{\mathcal A},
\qquad x\in\mathcal H.
\]
\end{enumerate}
The constants $A$ and $B$ are called the \emph{lower} and \emph{upper frame bounds}, respectively.
If only the upper inequality in (ii) holds, then $F$ is called a \emph{continuous Bessel family} (or \emph{Bessel mapping}).
A continuous frame is called \emph{tight} if $A=B$. In particular, it is called a \emph{Parseval continuous frame} if $A=B=1$.
\end{Definition}


\subsection{Contributions}

The main contributions of this paper are as follows. First, we establish a trace-controlled selection theorem for positive compact operators acting on countably generated Hilbert $C^*$-modules over finite-dimensional $C^*$-algebras (Theorem~\ref{IT}). This provides a module analogue of the binary selection mechanism available in Hilbert spaces, with quantitative control expressed in terms of the trace of the selected operators (see definition~\ref {TC}). Second, we develop a binary selection principle for positive trace-class compact operators on Hilbert $C^*$-modules, with an error estimate governed by the same square-root trace scale as in the Hilbert-space setting (Theorem~\ref{IT2}). These results provide the operator-selection framework needed for the discretization problem (Theorem~\ref{IT2} and Corollary~\ref{mainco}).

We then apply the selection results of Theorem~\ref{IT2} and Corollary~\ref{mainco} to continuous frames in Hilbert $C^*$-modules. In particular, we obtain a uniformly discrete sampling sequence whose associated discrete frame operator approximates the continuous Hilbert-module frame operator in operator norm. As a consequence, the resulting sampling family is a Hilbert $C^*$-module frame, with explicit frame bounds and a controlled separation of the sampling points.

The paper is organized as follows. Section~\ref{S2} contains the preliminary definitions and results concerning $C^*$-algebras, Hilbert $C^*$-modules, and compact operators on Hilbert modules. Section~\ref{S3} develops the operator-selection results for positive trace-class compact operators on Hilbert $ C^*$-modules over finite-dimensional $C^*$-algebras. In particular, we establish the module versions of the binary selection results used in the subsequent discretization argument. Section~\ref{S4} applies these selection results, together with the preceding $C^*$-algebraic and geometric estimates, to the discretization of continuous frames on countably generated Hilbert $C^*$-modules. In this section, we obtain the uniform discretization result under the assumption that every closed submodule admits an orthogonal complement.

\subsection{Notations}

Throughout the paper, $\mathcal{A}$ denotes a $C^*$-algebra and $\mathcal{H}$ denotes a Hilbert $\mathcal{A}$-module. We write $\mathcal{L}_{\mathcal{A}}(\mathcal{H},\mathcal{E})$ for the space of adjointable $\mathcal{A}$-linear operators from $\mathcal{H}$ to $\mathcal{E}$. In the case $\mathcal{E}=\mathcal{H}$, we simply write $\mathcal{L}_{\mathcal{A}}(\mathcal{H}):=\mathcal{L}_{\mathcal{A}}(\mathcal{H},\mathcal{H}).$
We use $I_{\mathcal{H}}$ for the identity operator on $\mathcal{H}$ and $e_{\mathcal{A}}$ for the identity element of $\mathcal{A}$ whenever $\mathcal{A}$ is unital. We denote by $\mathcal{A}_+$ the positive cone of the $C^*$-algebra $\mathcal{A}$. The order relation on $\mathcal{A}$ is denoted by $\leq_{\mathcal A}$; whenever there is no ambiguity, we simply write $\leq$. For a countable set $I\subset \mathbb{N}$ we denot the cardinal of $I$ as $|I|$. Other notation and terminology used throughout the paper will be introduced and explained when it first appears.

\section{Preliminaries}\label{S2}
In this section, we will consider all the basic definitions and Lemmas also some known properties and theorems concerning $ C^*$-algebras and $ C^*$-Hilbert modules. 

\subsection{Hilbert \(C^*\)-modules and compact operators}
The space of Hilbert-module compact operators on a Hilbert
$\mathcal A$-module $\mathcal H$ is defined by
\[
\mathcal K_{\mathcal A}(\mathcal H)
:=
\overline{\operatorname{span}_{\mathcal{A}}
\left\{
\theta_{x,y}:x,y\in\mathcal H
\right\}},
\]
where, for $x,y,z\in\mathcal H$, the rank-one operator
$\theta_{x,y}$ is defined by
\[
\theta_{x,y}(z):=\langle z,x\rangle_{\mathcal A}y.
\]
Here, the closure is taken in the operator norm.

For our purposes, we need the following definition, which is a generalization of trace-class operators from Hilbert spaces to Hilbert module operators.

\begin{Definition}[Trace-type compact operators]\label{TC}
Let $\mathcal H$ be a countably generated Hilbert $\mathcal A$-module,
where $\mathcal A$ is a finite-dimensional $C^*$-algebra, and let
$\{e_i\}_{i\in\mathbb N}$ be an orthogonal basis of $\mathcal H$.
A positive compact operator $T\in\mathcal K_{\mathcal A}(\mathcal H)$
is said to be of trace type if
\[
\operatorname{tr}_{\{e_i\}_{i\in\mathbb N}}(T)
:=
\sum_{i\in\mathbb N}
\langle Te_i,e_i\rangle_{\mathcal A}
\]
converges in $\mathcal A$.
We denote the corresponding class of trace-type operators by
$\mathcal T_{\mathcal A}(\mathcal H)$. 

We also consider the closed subspace
\[
\mathcal{SK}_{\mathcal A}(\mathcal H)
:=
\overline{\operatorname{span}_{\mathcal{A}}
\left\{
\theta_{e_i,e_i}:i\in\mathbb N
\right\}}.
\]
\end{Definition}

If $T\in\mathcal T_{\mathcal A}(\mathcal H)$ is positive, then, in
particular, $T$ is a compact operator and hence admits approximation
in operator norm by finite-rank operators of the form
\[
T_n
=
\sum_{k=1}^{N_n}\theta_{x_k^{(n)},y_k^{(n)}},
\qquad
x_k^{(n)},y_k^{(n)}\in\mathcal H,
\]
with
\[
\|T-T_n\|_{\mathcal L_{\mathcal A}(\mathcal H)}
\longrightarrow 0.
\]
A positive compact operator have a diagonalisation corresponding to an orthonormal basis of the Hilbert module Theorem~\ref{TED}; in particular, when the corresponding diagonalization theorem applies, one obtains finite-rank positive approximants of the form
\[
T_n
=
\sum_{k=1}^{N_n}
a_k^{(n)}\theta_{x_k^{(n)},x_k^{(n)}},
\qquad
a_k^{(n)}\in\mathcal A_+.
\]
The elements $ a_k^ {(n)} $ are positive in the algebra $\mathcal{A}$ because the operator $T$ is positive; see Definition~\ref{PO}.
\begin{Definition}[Positive compact operator]\label{PO}
Let $\mathcal A$ be a finite-dimensional $C^*$-algebra and let $\mathcal E$ be a Hilbert
$\mathcal A$-module. An adjointable compact operator

$$
K\in\mathcal K_{\mathcal A}(\mathcal H)
$$

is called \emph{positive} if

$$
\langle Kx,x\rangle_{\mathcal A}\geq 0,
\qquad x\in\mathcal H.
$$

\end{Definition}

\begin{Definition}[Eigenvector and eigenvalue]
Let $K\in\mathcal K_{\mathcal A}(\mathcal E)$ be a compact operator. An element
$x\in\mathcal E$ is called an \emph{eigenvector} of $K$ if the submodule
$\mathcal A x$ generated by $x$ is projective and there exists an element
$A\in\mathcal A$ such that

$$
Kx=Ax.
$$

The element $A\in\mathcal A$ is called an \emph{eigenvalue} associated with $x$.
\end{Definition}

\begin{Theorem}[Diagonalization of compact positive operators]
\label{TED}
Let $\mathcal A$ be a finite-dimensional $C^*$-algebra and let
$\mathcal H$ be a Hilbert $\mathcal A$-module. Let $K\in\mathcal K_{\mathcal A}(\mathcal H)$
be a positive compact operator such that $\ker K=\{0\}.$
Then there exists a family $\{x_i\}_{i\in I}\subseteq\mathcal H$ and positive elements $A_i\in\mathcal A$ such that
\[
\langle x_i,x_j\rangle_{\mathcal A}
=
\delta_{ij}1_{\mathcal A},
\qquad
Kx_i=A_ix_i,
\]
and
\[
\mathcal H
=
\overline{\operatorname{span}_{\mathcal A}\{x_i:i\in I\}}.
\]
In particular, $\{x_i\}_{i\in I}$ is a complete orthonormal family of
$\mathcal A$-module eigenvectors for $K$.
\end{Theorem}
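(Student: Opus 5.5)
The plan is to reduce the statement to the spectral theorem for compact positive operators on Hilbert spaces, using the structure theory of finite-dimensional $C^*$-algebras. Write $\mathcal A\cong\bigoplus_{k=1}^m M_{n_k}(\mathbb C)$ and let $p_1,\dots,p_m$ be the minimal central projections, so that $1_{\mathcal A}=\sum_k p_k$. Every $T\in\mathcal L_{\mathcal A}(\mathcal H)$ is $\mathcal A$-linear, hence commutes with each $p_k$. We obtain an orthogonal decomposition $\mathcal H=\bigoplus_k p_k\mathcal H$ into $K$-invariant submodules. Each $p_k\mathcal H$ is a Hilbert $M_{n_k}$-module, and $K$ restricts to a positive compact operator with trivial kernel on it. It therefore suffices to treat $\mathcal A=M_n(\mathbb C)$ and then reassemble.

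For $\mathcal A=M_n$ with matrix units $e_{ij}$, put $V:=e_{11}\mathcal H$. On $V$, $\langle v,w\rangle_{\mathcal A}=\lambda(v,w)e_{11}$ for a scalar $\lambda(v,w)$, which makes $V$ a Hilbert space; completeness is inherited from $\mathcal H$. The map $x\mapsto (e_{1i}x)_{i=1}^n$ identifies $\mathcal H$ with $\mathbb C^n\otimes V$, with $M_n$ acting on the first factor. Under this identification, adjointable module maps are exactly $I\otimes S$ with $S\in B(V)$. Moreover, $\theta_{x,y}$ corresponds to a finite-rank $S$, so $\mathcal K_{\mathcal A}(\mathcal H)$ corresponds to $I\otimes\mathcal K(V)$. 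Positivity of $K$ in the sense of Definition~\ref{PO} gives $S\ge0$, and $\ker K=\{0\}$ gives $\ker S=\{0\}$. The Hilbert-space spectral theorem then yields an orthonormal basis $\{v_j\}$ of $V$ with $Sv_j=\lambda_jv_j$ and $\lambda_j>0$.

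Next the basis is assembled into module eigenvectors. Partition the index set into blocks of size $n$, say $\{v_{j,1},\dots,v_{j,n}\}$, and set $x_j:=\sum_{i=1}^n e_{i1}v_{j,i}$. A direct computation gives $\langle x_j,x_l\rangle_{\mathcal A}=\sum_{i,r}e_{i1}\lambda(v_{j,i},v_{l,r})e_{1r}=\delta_{jl}1_{\mathcal A}$. It also gives $Kx_j=A_jx_j$ with $A_j=\sum_i\lambda_{j,i}e_{ii}\in\mathcal A_+$. Since $\mathcal A x_j\cong\mathcal A$ is free, it is projective. Completeness of $\{v_j\}$ in $V$ gives $\mathcal H=\overline{\operatorname{span}_{\mathcal A}}\{x_j\}$, because $\mathcal A x_j$ contains every $e_{i1}v_{j,r}$. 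Finally, when $m>1$, take $x_j:=\sum_k x_j^{(k)}$ with $x_j^{(k)}$ from the $k$-th summand. Then $\langle x_j,x_l\rangle_{\mathcal A}=\sum_k\delta_{jl}p_k=\delta_{jl}1_{\mathcal A}$ and $A_j:=\sum_kA_j^{(k)}$.

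The main obstacle is the counting in the grouping step. Normalization to $1_{\mathcal A}$, rather than merely to a projection, requires $\dim V_k$ to be a multiple of $n_k$. It also requires the index sets for the different $k$ to be matched bijectively. This is automatic when each $V_k$ is infinite-dimensional and separable, which holds for instance when $\mathcal H$ is countably generated and infinitely generated in every central summand, since then all blocks are indexed by $\mathbb N$. In the finite-dimensional case I would first check whether the eigenspace multiplicities allow the grouping. If they do not, the conclusion must be weakened to $\langle x_i,x_i\rangle_{\mathcal A}$ being a projection, as for standard frames. I expect this to be the point needing an additional hypothesis, for example that $\mathcal H$ be a direct summand of $\ell^2(\mathcal A)$ with every $V_k$ infinite-dimensional.
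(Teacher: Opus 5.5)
The paper states Theorem~\ref{TED} without any proof or reference, so there is no argument to compare yours with. Judged on its own, your reduction is correct. The central splitting $\mathcal H=\bigoplus_k p_k\mathcal H$, the identification of $p_k\mathcal H$ with $\mathbb C^{n_k}\otimes V_k$ (under which $\mathcal L_{\mathcal A}$ and $\mathcal K_{\mathcal A}$ become $I\otimes B(V_k)$ and $I\otimes\mathcal K(V_k)$), and the Gram-matrix computations for $x_j=\sum_i e_{i1}v_{j,i}$ all check out. Separability of each $V_k$ is automatic, since $V_k=\overline{\operatorname{ran}S_k}$ with $S_k$ compact.

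The obstruction you isolate is not a weakness of your method: the statement is false as written, and you should say so rather than hedge. Take $\mathcal A=\mathbb C\oplus\mathbb C$, $p_1=(1,0)$, and $\mathcal H=\mathcal Ap_1$ with $\langle a,b\rangle_{\mathcal A}=ab^*$. Then $K=I_{\mathcal H}=\theta_{p_1,p_1}$ is positive, compact and injective, yet every $x=(\alpha,0)\in\mathcal H$ has $\langle x,x\rangle_{\mathcal A}=(|\alpha|^2,0)\neq 1_{\mathcal A}$. Similarly, for $\mathcal A=M_2(\mathbb C)$ and $\mathcal H=\mathcal Ae_{11}$, the operator $K=\theta_{e_{11},e_{11}}=I_{\mathcal H}$ fails because $\langle x,x\rangle_{\mathcal A}=xx^*$ has rank at most one. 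Both modules are finitely generated and have only trivial closed submodules, so the hypotheses added later in the paper (countable generation, complemented submodules) do not rescue the statement.

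Your dimension condition is also necessary for every admissible family, not only for grouped eigenvectors. If $\langle x_i,x_j\rangle_{\mathcal A}=\delta_{ij}1_{\mathcal A}$ and the $x_i$ generate $\mathcal H$, then $(a_i)\mapsto\sum_i a_ix_i$ is a unitary from $\ell^2(I,\mathcal A)$ onto $\mathcal H$, which forces $\dim V_k=n_k|I|$ for every $k$. Hence the conclusion holds exactly when $\mathcal H\cong\mathcal A^N$ for some finite $N$ or $\mathcal H\cong\ell^2(\mathcal A)$. In those cases your grouping argument proves it.

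One correction: eigenspace multiplicities play no role in the finite-dimensional case. A block may mix different eigenvalues, as your own formula $A_j=\sum_i\lambda_{j,i}e_{ii}$ already permits, so only the counts $\dim V_k$ matter. The weakened version you mention uses the $v_j$ themselves, with $\langle v_i,v_j\rangle_{\mathcal A}=\delta_{ij}q_i$, where $q_i=e^{(k)}_{11}$ when $v_i\in V_k$. It holds for every positive compact injective $K$ and still yields the norm-convergent expansion $K=\sum_j\lambda_j\theta_{v_j,v_j}$, which is the kind of finite-rank approximation the paper relies on afterwards.
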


\begin{Lemma}[Order and squares of positive operators]\label{TL}
Let $\mathcal{H}$ be a Hilbert $C^*$-module over a $C^*$-algebra $\mathcal A$, and let $S\in\mathcal L_{\mathcal A}(\mathcal{H})$ be a positive operator. If $0\leq S\leq cI_\mathcal{H}$, for some $c\geq 0$, then $0\leq S^2\leq cS\leq c^2I_\mathcal{H}.$
In particular, $0\leq S^2\leq c^2I_\mathcal{H}.$
\end{Lemma}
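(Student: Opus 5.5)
We argue entirely inside the $C^*$-algebra $\mathcal L_{\mathcal A}(\mathcal H)$ of adjointable operators, where the order $S\le T$ means $T-S$ is a positive element. The key tool is the standard fact that for positive elements of a $C^*$-algebra and any $X$ in it, $0\le P\le Q$ implies $0\le X^*PX\le X^*QX$. This holds because $X^*(Q-P)X=(R X)^*(RX)$ with $R=(Q-P)^{1/2}$. Since $S\ge 0$ in $\mathcal L_{\mathcal A}(\mathcal H)$, its square root $S^{1/2}$ exists, is positive, self-adjoint and commutes with $S$. The adjointable operators form a $C^*$-algebra, so the continuous functional calculus is available there.

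\textbf{Step 1 ($S^2\le cS$).} Apply the conjugation fact with $X=S^{1/2}$ to the inequality $0\le S\le cI_{\mathcal H}$. This gives $0\le S^{1/2}SS^{1/2}\le cS^{1/2}S^{1/2}$. Using $S^{1/2}SS^{1/2}=S^2$ and $S^{1/2}S^{1/2}=S$, we get $0\le S^2\le cS$. Positivity of $S^2$ can also be seen directly, since $S^2=S^*S$ and hence $\langle S^2x,x\rangle_{\mathcal A}=\langle Sx,Sx\rangle_{\mathcal A}\ge0$.

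\textbf{Step 2 ($cS\le c^2I_{\mathcal H}$).} Multiply $S\le cI_{\mathcal H}$ by the nonnegative scalar $c$; this preserves the positive cone. Combining the two steps by transitivity of $\le$ in the cone, which is closed under addition, gives $0\le S^2\le cS\le c^2 I_{\mathcal H}$. The last claim of the lemma follows immediately.

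The only point needing care is that positivity of an operator on a Hilbert $C^*$-module has two possible meanings. One is $\langle Sx,x\rangle_{\mathcal A}\ge0$ for all $x$; the other is positivity as an element of the $C^*$-algebra $\mathcal L_{\mathcal A}(\mathcal H)$. The conjugation argument requires the second. For adjointable operators the two notions coincide (Lance, Lemma 4.1), and we invoke this to pass between them. As an alternative that avoids square roots, one can apply the functional calculus to $S$ directly. Its spectrum lies in $[0,c]$, and on that interval $t^2\le ct\le c^2$ pointwise, so the operator inequalities follow from the positivity of $f(S)$ for $f\ge0$ on the spectrum.
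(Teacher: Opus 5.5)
Your proof is correct and is essentially the paper's argument. The paper shows $cS-S^2=S(cI_{\mathcal H}-S)\geq 0$ by invoking the fact that commuting positive elements have positive product, and your conjugation $S^{1/2}(cI_{\mathcal H}-S)S^{1/2}\geq 0$ is exactly the standard proof of that fact; your functional-calculus alternative coincides with the paper's subsequent Remark, and your explicit appeal to Lance's Lemma 4.1 to identify the two notions of positivity is a useful clarification that the paper leaves implicit.
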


\begin{proof}
Since $\mathcal L_{\mathcal A}(\mathcal{H})$ is a $C^*$-algebra and $0\leq S\leq cI_\mathcal{H},$
we have $cI_\mathcal{H}-S\geq 0.$
Moreover, $S$ and $cI_\mathcal{H}-S$ commute, since $S(cI_\mathcal{H}-S)=cS-S^2=(cI_\mathcal{H}-S)S.$
By the standard property of $C^*$-algebras that the product of two commuting positive elements is positive, i.e., function calculus in $C^*$-algebras, we obtain
\[
S(cI_\mathcal{H}-S)\geq 0.
\]
Hence $
cS-S^2\geq 0,$ and consequently $S^2\leq cS.$

Since $S\geq0$ and $c\geq0$, we have $cS\geq0.$
Furthermore, multiplying the inequality $S\leq cI_\mathcal{H}$ by the non-negative scalar $c$ gives
\[
cS\leq c^2I_\mathcal{H}.
\]
Finally, since $S\geq0$, we have $S^2\geq0$.

Therefore,
\[
0\leq S^2\leq cS\leq c^2I_\mathcal{H}.
\]
\end{proof}

\begin{Remark}[Functional calculus]
The same conclusion can be obtained using the continuous functional
calculus in the $C^*$-algebra $\mathcal L_{\mathcal A}(\mathcal{H})$. Indeed, since $S\geq0$, the operator $S$ is self-adjoint, and hence the
continuous functional calculus applies to $S$. The inequality
\[
0\leq S\leq cI_\mathcal{H},\;\text{implies}\;
\sigma(S)\subseteq[0,c].
\]
Consider the continuous function
\[
f(t)=t(c-t),\qquad t\in[0,c].
\]
Since
\[
f(t)\geq0,\qquad t\in[0,c],
\]
the positivity-preserving property of the continuous functional calculus
gives
\[
f(S)=S(cI_\mathcal{H}-S)=cS-S^2\geq0.
\]
Thus $S^2\leq cS$. Combining this with $S\leq cI_E$ yields
\[
S^2\leq cS\leq c^2I_\mathcal{H}.
\]
Hence the result does not rely on the Hilbert space structure. It follows from the $C^*$-algebra structure of $\mathcal L_{\mathcal A}(\mathcal{H})$, where the continuous functional calculus is available for every normal element.
\end{Remark}

\subsection{\(C^*\)-algebras trace map}

We begin this section with technical lemmas concerning the order in a $C^*$-algebra $\mathcal{A}$. This lemma is well known (see~\cite{lance1995hilbert,lindenhovius2015classifying,takesaki2002operator}), but since we use it frequently in the main results of this paper, we state and prove it here.

\begin{Lemma}[\cite{lindenhovius2015classifying}]
Let $\mathcal{A}$ be a $C^*$-algebra and $a,b\in\mathcal{A}$, we have:
\begin{enumerate}
    \item $(a+b)^*(a+b)\leq 2(a^*a+b^*b)$.\label{1}
    \item $\|aa^*-bb^*\|\leq (\|a\|+\|b\|)\|a-b\|$.\label{2}
    \item If $0\leq a\leq b$ then $\|a\|\leq \|b\|$.\label{3}
\end{enumerate}
\end{Lemma}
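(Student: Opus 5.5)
We prove the three items separately, using only the $C^*$-identity, the fact that $x\mapsto c^*xc$ preserves order, and the fact that $\mathcal A$ embeds isometrically and order-preservingly in its unitization $\tilde{\mathcal A}$. Since the statement allows $\mathcal A$ to be non-unital, we pass to $\tilde{\mathcal A}$ whenever a unit is needed. Norms, positivity and order are unchanged by this passage.

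\emph{Item 1.} The plan is to expand a nonnegative square. Consider $(a-b)^*(a-b)\geq 0$. Expanding gives
\[
a^*a+b^*b-a^*b-b^*a\geq 0,
\]
that is, $a^*b+b^*a\leq a^*a+b^*b$. Then
\[
(a+b)^*(a+b)=a^*a+b^*b+a^*b+b^*a\leq 2(a^*a+b^*b).
\]
This step is purely algebraic and uses only that $c^*c\geq0$ for every $c$.

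\emph{Item 2.} We use a telescoping decomposition:
\[
aa^*-bb^*=a(a^*-b^*)+(a-b)b^*.
\]
By the triangle inequality and submultiplicativity,
\[
\|aa^*-bb^*\|\leq \|a\|\,\|a^*-b^*\|+\|a-b\|\,\|b^*\|.
\]
Since the involution is isometric, $\|a^*-b^*\|=\|(a-b)^*\|=\|a-b\|$ and $\|b^*\|=\|b\|$. This gives the bound $(\|a\|+\|b\|)\|a-b\|$.

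\emph{Item 3.} This is the only step that needs genuine $C^*$-theory, namely the functional calculus for the self-adjoint element $b$ in $\tilde{\mathcal A}$.

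First, since $0\leq b$, the spectrum of $b$ lies in $[0,\|b\|]$. Hence $\|b\|e-b\geq 0$, so $b\leq \|b\|e$.

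Next, chaining the order gives $0\leq a\leq b\leq \|b\|e$. Thus $\|b\|e-a\geq0$, which means $\sigma(a)\subseteq(-\infty,\|b\|]$. Since $a\geq0$, we also have $\sigma(a)\subseteq[0,\infty)$, so $\sigma(a)\subseteq[0,\|b\|]$.

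Finally, for a self-adjoint (indeed normal) element the norm equals the spectral radius. Therefore $\|a\|=\max\sigma(a)\leq\|b\|$.

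The main subtlety is ensuring that positivity and spectrum are computed consistently in the unitization. This is justified because the spectrum of an element of $\mathcal A$ in $\tilde{\mathcal A}$ differs at most by the point $0$, which does not affect the bounds.
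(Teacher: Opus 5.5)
Your proof is correct and follows the paper's argument: the same expansion of $(a-b)^*(a-b)\geq 0$ for item 1, a mirror-image telescoping decomposition of $aa^*-bb^*$ for item 2, and the chain $a\leq b\leq\|b\|e$ for item 3, where you replace the paper's characterization $\|a\|=\inf\{\lambda>0: a\leq\lambda e_{\mathcal A}\}$ with the equivalent spectral-radius argument. Your passage to the unitization $\tilde{\mathcal A}$ is a useful bit of extra care, since the paper uses $e_{\mathcal A}$ even though $\mathcal A$ is not assumed unital.
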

\begin{proof}
    For~\ref{1} we have $(a+b)^*(a+b)=a^*a+a^*b+b^*a+b^*b$ since $(a-b)^*(a-b)\geq 0$ we have $a^*b+b^*a\leq a^*a+b^*b$ This implies~\ref{1}.
    As for the prove of~\ref{2} we have $aa^*-bb^*=(a-b)a^*+b(a^*-b^*)$ Hence \begin{align*}
        \|aa^*-bb^*\|&=\|(a-b)a^*+b(a^*-b^*)\|\\
        &\leq \|a-b\|\|a^*\|+\|b\|\|a^*-b^*\|\\
        &=(\|a\|+\|b\|)\|a-b\|.
    \end{align*}
    Suppose now that $0\leq a \leq b$. We have for all positive elements $a\in\mathcal{A}_+$ the following characterization $\|a\|=\inf\{\lambda>0\;:\;a\leq \lambda e_{\mathcal{A}}\}$. Since $a\leq b\leq\|b\|e_{\mathcal{A}}$ this implies that $\|a\|\leq \|b\|$.
\end{proof}

The proof of Theorem~\ref{IT} relies on the following isometric characterization of finite-dimensional $C^*$-algebras.

\begin{Theorem}[Structure theorem for finite-dimensional $C^*$-algebras~\cite{lindenhovius2015classifying}]\label{fds}
Let $\mathcal A$ be a finite-dimensional $C^*$-algebra. Then there exist positive integers $k,d_1,\ldots,d_k$ such that
\[
\mathcal A \cong \bigoplus_{i=1}^k M_{d_i}(\mathbb C)
\]
as $C^*$-algebras. Moreover, the integer $k$ and the multi-set
$\{d_1,\ldots,d_k\}$ are uniquely determined by $\mathcal A$.
\end{Theorem}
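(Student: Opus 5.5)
The statement to prove is Theorem~\ref{fds}: every finite-dimensional $C^*$-algebra $\mathcal A$ is $*$-isomorphic to $\bigoplus_{i=1}^k M_{d_i}(\mathbb C)$, and the data $k,\{d_1,\dots,d_k\}$ are unique. I would follow the classical Wedderburn--Artin route, adapted to the $C^*$-setting so that the isomorphism is a $*$-isomorphism (hence isometric).

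\textbf{Existence.} First, $\mathcal A$ is unital. A finite-dimensional $C^*$-algebra is closed under the continuous functional calculus, so every self-adjoint element has a finite spectrum and is a real linear combination of orthogonal projections. Among all projections, choose one that is maximal for the order $p\le q$. Maximality forces $p$ to be a unit: if $a(1-p)\neq 0$ for some $a$, then spectral projections of $(1-p)a^*a(1-p)$ give a nonzero projection orthogonal to $p$, contradicting maximality.

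Second, decompose along the center. The center $Z(\mathcal A)$ is a finite-dimensional commutative $C^*$-algebra, hence $Z(\mathcal A)\cong\mathbb C^k$, spanned by its minimal projections $z_1,\dots,z_k$, which are orthogonal and sum to $1$. This gives
\[
\mathcal A=\bigoplus_{i=1}^k z_i\mathcal A ,
\]
where each $z_i\mathcal A$ is a $C^*$-algebra with trivial center $\mathbb C z_i$, that is, a factor.

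Third, show that a finite-dimensional factor $\mathcal B$ is a full matrix algebra:
\begin{enumerate}
\item Pick a minimal projection $e\in\mathcal B$. Then $e\mathcal B e=\mathbb C e$, because any self-adjoint element of $e\mathcal B e$ has spectral projections under $e$, and by minimality these are $0$ or $e$.
\item Take a maximal family of mutually orthogonal minimal projections $e_1,\dots,e_d$. Maximality forces $\sum_j e_j=1$.
\item Triviality of the center gives $e_i\mathcal B e_j\neq 0$ for all $i,j$. Otherwise the sum of the $e_j$ that are "connected" to $e_i$ would be a nontrivial central projection.
\item For a nonzero $x\in e_j\mathcal B e_1$, the element $x^*x$ lies in $e_1\mathcal B e_1=\mathbb C e_1$ and is positive and nonzero. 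After scaling, $v_j:=x$ is a partial isometry with $v_j^*v_j=e_1$ and $v_jv_j^*\le e_j$. Minimality of $e_j$ gives $v_jv_j^*=e_j$.
\item Define matrix units $E_{ij}:=v_iv_j^*$. They satisfy $E_{ij}E_{kl}=\delta_{jk}E_{il}$ and $E_{ij}^*=E_{ji}$.
\item For every $b\in\mathcal B$, the element $v_i^*bv_j$ lies in $e_1\mathcal Be_1=\mathbb C e_1$; write it as $\lambda_{ij}e_1$. Then
\[
b=\sum_{i,j} e_i b e_j=\sum_{i,j}\lambda_{ij}E_{ij}.
\]
\end{enumerate}
Hence $b\mapsto(\lambda_{ij})$ is a $*$-isomorphism $\mathcal B\cong M_d(\mathbb C)$. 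It is automatically isometric, since injective $*$-homomorphisms between $C^*$-algebras are isometric.

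\textbf{Uniqueness.} Isomorphism invariants recover the data. The number $k$ equals $\dim Z(\mathcal A)$, because the center of $\bigoplus_i M_{d_i}(\mathbb C)$ is $\mathbb C^k$. The minimal central projections are canonical, and the corresponding summands are $M_{d_i}(\mathbb C)$. Each $d_i$ is recovered as $\sqrt{\dim z_i\mathcal A}$, or equivalently as the maximal number of orthogonal minimal projections in that summand. A $*$-isomorphism preserves centers, minimal central projections and dimensions, so the multiset $\{d_i\}$ is determined by $\mathcal A$.

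\textbf{Main obstacle.} The delicate step is item 3 of the factor argument, producing nonzero partial isometries between arbitrary minimal projections. I would handle it as follows. For fixed $e_1$, let $q$ be the projection onto the closed two-sided ideal generated by $e_1$; in finite dimensions this ideal is $\mathcal B q$ for a central projection $q$. Since $\mathcal B$ is a factor and $q\neq 0$, we get $q=1$. If $e_j\mathcal B e_1=0$, then $e_j$ would annihilate that ideal and so $e_j=e_jq=0$, a contradiction.
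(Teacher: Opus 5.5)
Your proof is correct. The paper gives no proof of this theorem; it quotes it from the cited reference as a standard fact, so there is no competing argument to compare with. What you wrote is the textbook proof that the citation stands in for:
\begin{itemize}
\item a unit obtained from a maximal projection;
\item a splitting along the minimal central projections $z_i$;
\item matrix units $E_{ij}=v_iv_j^*$ built from partial isometries between minimal projections in each factor;
\item uniqueness read off from $\dim Z(\mathcal A)=k$ and $\dim z_i\mathcal A=d_i^2$.
\end{itemize}
Its advantage over the bare citation is that it exhibits the isomorphism explicitly as a $*$-isomorphism. It is therefore isometric and preserves positivity and order, which is exactly what the paper relies on when it transports norms, order and traces through $\Phi$ in Lemmas~\ref{psd1} and~\ref{psd}.

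A few small points would make it airtight.
\begin{itemize}
\item Maximal and minimal projections exist because chains of projections are short. A strictly increasing chain $p_1<\dots<p_m$ yields nonzero, mutually orthogonal, hence linearly independent projections $p_1,\,p_2-p_1,\dots,p_m-p_{m-1}$, so $m\le\dim\mathcal A$.
\item Before unitality is established, read $a(1-p)$ as $a-ap$.
\item In item 3, the remark about the sum of the ``connected'' $e_j$ would need a transitivity argument. Your ideal argument in the final paragraph replaces it, and it suffices because item 4 only uses $e_j\mathcal B e_1\neq 0$.
\item For that ideal argument, note that the ideal generated by $e_1$ is $\operatorname{span}\mathcal B e_1\mathcal B$. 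This is visibly self-adjoint, so your unitality step applies to it. Its unit $q$ is central because $qb=qbq=bq$ for all $b\in\mathcal B$.
\item You need $\mathcal A\neq 0$ to get $k\geq 1$.
\end{itemize}
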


\begin{Definition}[Faithful finite trace~\cite{takesaki2002operator}]
Let $\mathcal A$ be a $C^*$-algebra. A \emph{trace} on $\mathcal A$ is a positive linear functional
$$
\tau_{\mathcal A}:\mathcal A\longrightarrow\mathbb C
$$
such that
$$
\tau_{\mathcal A}(ab)=\tau_{\mathcal A}(ba),
\qquad a,b\in\mathcal A.
$$
The trace $\tau_{\mathcal A}$ is said to be \emph{faithful} if
$$
\tau_{\mathcal A}(a^*a)=0
\quad\Longrightarrow\quad
a=0.
$$

If $\mathcal A$ is unital, the trace is said to be \emph{finite} if
$$
\tau_{\mathcal A}(1_{\mathcal A})<\infty.
$$
A faithful finite trace is a trace satisfying both properties above.
\end{Definition}

\begin{Definition}[Equivalence of traces]
Let $\mathcal{A}$ and $\mathcal{B}$ be $C^*$-algebras equipped with traces $\tau_{\mathcal{A}}$ and $\tau_{\mathcal{B}}$, respectively. We say that $\tau_{\mathcal{A}}$ and $\tau_{\mathcal{B}}$ are \emph{equivalent}, and write
$$
\tau_{\mathcal{A}}\equiv\tau_{\mathcal{B}},
$$
if there exists a $*$-isomorphism $
\phi:\mathcal{A}\longrightarrow\mathcal{B}$
such that
$$
\tau_{\mathcal{B}}(b)
=
\tau_{\mathcal{A}}\bigl(\phi^{-1}(b)\bigr),
\qquad b\in\mathcal{B}.
$$
\end{Definition}

\begin{Definition}
    For trace-class operators (definition~\ref{TC}) we consider the trace map $\operatorname{tr}_{\mathcal{A}}$ defined as $$\operatorname{tr}_{\mathcal{A}}(T)=\sum_{i\in\mathbb N}
\tau_{\mathcal{A}}(\langle Te_i,e_i\rangle_{\mathcal A}).$$
\end{Definition}

\begin{Example}\label{E}
Let
$$
\mathcal{A}
=
\bigoplus_{r=1}^{m} M_{d_r}(\mathbb{C})
$$

be a finite-dimensional $C^*$-algebra. A trace on $\mathcal{A}$ is of the form

$$
\tau_{\mathcal{A}}(a_1,\ldots,a_m)
=
\sum_{r=1}^{m} c_r\,\operatorname{Tr}(a_r),
\qquad
c_r\geq 0,
$$

where $\operatorname{Tr}$ denotes the usual matrix trace on
$M_{d_r}(\mathbb{C})$. Moreover, $\tau_{\mathcal{A}}$ is faithful if and only if

$$
c_r>0,\qquad r=1,\ldots,m.
$$

In particular, if $\mathcal{B}$ is a finite dimension $C^*-$algebra such that $\mathcal{B}$ is $*$-isomorphic to $\mathcal{A}$ then there exist a $*$-isomorphism $\phi:\mathcal{A}\to\mathcal{B}$ and we can define a trace on $\mathcal{B}$ as $$
\tau_{\mathcal{B}}\circ\phi=\tau_{\mathcal{A}}.
$$
For example, the canonical trace on $\mathcal{A}$ is given by

$$
\tau_{\mathcal{A}}(a_1,\ldots,a_m)
=
\sum_{r=1}^{m}\operatorname{Tr}(a_r).
$$
\end{Example}
We will use this example frequently in the next sections.

\subsection{Selector lammas}

\begin{Definition}[Binary selectors~\cite{bownik2024selector}]
Let $I$ be a countable index set, and let $\mathcal J=\{J_k\}_{k\in K}$ be a partition of $I$ such that $\#J_k=2,\quad k\in K.$

A pair of sets $(I_0,I_1)$ is called a \emph{binary selector of order $1$}
associated with the partition $\mathcal J$ if
$$
I=I_0\cup I_1,\;\text{and}\;
\#(I_0\cap J_k)
=
\#(I_1\cap J_k)
=
1,
\quad k\in K.
$$
Thus, each two-element set $J_k$ contributes exactly one element to each of $I_0$ and $I_1$. More generally, let $N\geq2$. Assume that binary selectors of order $N-1$ have been constructed, denoted by
$$
\{I_b\}_{b\in\{0,1\}^{N-1}},\:
\text{where}\:
I=\bigcup_{b\in\{0,1\}^{N-1}}I_b.
$$
For each $b\in{0,1}^{N-1}$, let $\mathcal J_b=\{J_{b,k}\}_{k\in K}
$ be a partition of $I_b$ satisfying $\#J_{b,k}=2,
\qquad k\in K.
$
A family $
\{I_{b0},I_{b1}\}_{b\in\{0,1\}^{N-1}}
$ is called a \emph{binary selector of order $N$} if, for every $b\in{0,1}^{N-1}$, we have $I_b=I_{b0}\cup I_{b1}$, and $\#(I_{b0}\cap J_{b,k})=\#(I_{b1}\cap J_{b,k})=1,\quad k\in K.$
Equivalently, each two-element set $J_{b,k}$ is split into exactly one element belonging to $I_{b0}$ and one element belonging to $I_{b1}$.
\end{Definition}

The following lemma appears in the work of Bownik~\cite{bownik2024selector} in the form of a remark~\cite[Remark 2.15]{bownik2024selector}. For completeness, we state it here as a separate lemma and provide a proof. 



\begin{Lemma}\label{be}
Let $\varepsilon>0$, $d_1,\ldots,d_k\in\mathbb N$, and $D=\sum_{j=1}^{k}d_j$. Suppose that
$X_1,\ldots,X_m$ are jointly independent positive semi-definite random matrices taking finitely many values and having the block diagonal form
\[
X_i=
\operatorname{diag}\!\left(
X_i^{(1)},X_i^{(2)},\ldots,X_i^{(k)}
\right),
\qquad i=1,\ldots,m,
\]
where
\[
X_i^{(j)}\in M_{d_j}(\mathbb C)_+,
\qquad j=1,\ldots,k.
\]
Assume that, for every $j=1,\ldots,k$,
\[
\sum_{i=1}^{m}\mathbb E\!\left[X_i^{(j)}\right]
    \leq I_{d_j},\:\:
\text{and}\quad
\tau_{M_{d_j}(\mathbb C)}\left(X_i^{(j)}\right)=\operatorname{Tr}\!\left(X_i^{(j)}\right)
    \leq \varepsilon
\]
almost surely for every $i=1,\ldots,m$.

Then
\[
\mathbb P\!\left(
\|\sum_{i=1}^{m}X_i\|
    \leq
1+2\sqrt{\varepsilon k}+\varepsilon
\right)>0.
\]
\end{Lemma}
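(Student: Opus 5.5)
The plan is to reduce the statement to the method of interlacing families for higher-rank random matrices, as in Bownik~\cite{bownik2024selector}. The block-diagonal structure is used only in the final barrier estimate. First, since the $X_i$ are jointly independent and take finitely many values, the expected characteristic polynomial
\[
p(x)=\mathbb E\,\det\Bigl(xI_D-\sum_{i=1}^m X_i\Bigr)
\]
is the root of an interlacing family. Concretely, I would invoke the Cohen--Brändén extension of the Marcus--Spielman--Srivastava theory to positive semi-definite random matrices with finitely many values. This gives that $p$ is real-rooted and that some realization satisfies
\[
\lambda_{\max}\Bigl(\sum_i X_i\Bigr)\le \operatorname{maxroot}(p)
\]
with positive probability. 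Since each $X_i$ is positive semi-definite, $\|\sum_i X_i\|=\lambda_{\max}(\sum_i X_i)$. It therefore suffices to show
\[
\operatorname{maxroot}(p)\le 1+2\sqrt{\varepsilon k}+\varepsilon .
\]

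Second, I would express $p$ through a real stable polynomial. This is the standard formula
\[
p(x)=\prod_{i=1}^m\bigl(1-\partial_{z_i}\bigr)\,\mathbb E\det\Bigl(xI+\sum_i z_iX_i\Bigr)\Big|_{z=0},
\]
or a multi-affine version of it obtained by splitting each $X_i$ into its finitely many realizations. Because every $X_i$ is block diagonal, the determinant factors as a product over the $k$ blocks $j=1,\dots,k$. Consequently the barrier function of the underlying real stable polynomial is a sum of $k$ block barriers:
\[
\Phi^{i}(x)=\sum_{j=1}^k \partial_{z_i}\log\det\bigl(\cdots\bigr)^{(j)}.
\]
The hypotheses enter in two places. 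The condition $\sum_i\mathbb E X_i^{(j)}\le I_{d_j}$ controls the starting point of each block. The bound $\operatorname{Tr}X_i^{(j)}\le\varepsilon$ controls the increment of $\Phi$ after applying each operator $1-\partial_{z_i}$: each application shifts the barrier by at most $\delta$ provided that $\Phi\le 1-1/\delta$-type conditions hold, with the trace bound appearing in the second-derivative (convexity) estimate.

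Third, I would run the barrier argument of Marcus--Spielman--Srivastava (as adapted by Bownik) with parameters $t$ and $\delta$, starting from the point $x=t$. The key difference from the unstructured case is the following. The total trace of $X_i$ could be as large as $k\varepsilon$, and using it directly would produce the weaker bound $(1+\sqrt{k\varepsilon})^2$. Instead, I would bound the barrier blockwise. The initial barrier is at most a sum of $k$ terms, each of the form $1/(t-1)$-type. The per-step loss comes only from a single block trace $\le\varepsilon$. Optimizing $t$ and $\delta$ should then produce roots bounded by $t+\delta$ with
\[
t+\delta = 1+2\sqrt{\varepsilon k}+\varepsilon,
\]
where the factor $k$ enters through the initial barrier sum and $\varepsilon$ through the increment.

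The main obstacle is this last step. One must verify that the barrier monotonicity and convexity inequalities can be split along the blocks so that the $k$-dependence sits only in the initial barrier and not in the per-step increment. If the blockwise splitting of the convexity estimate fails, my fallback is to follow the derivation in \cite[Remark~2.15]{bownik2024selector} directly. That derivation applies the scalar barrier to $\sum_j$ of block barriers, with the trace condition imposed per block, and I would check that its parameter choice indeed yields $1+2\sqrt{\varepsilon k}+\varepsilon$.
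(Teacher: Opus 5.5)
Your proposal has a genuine gap at the step that carries the content of the lemma. The bound $1+2\sqrt{\varepsilon k}+\varepsilon$ is asserted, not derived, and the accounting you describe gives something weaker. In the rank-one Marcus--Spielman--Srivastava argument you are adapting, the trace enters only through the initial barrier, $\Phi^i(t\mathbf 1)\le\operatorname{Tr}(\mathbb E X_i)/t\le k\varepsilon/t$. The step condition $\Phi^i\le 1-1/\delta$ and the convexity estimate behind it do not involve the trace at all. Minimizing $t+\delta$ subject to $k\varepsilon/t+1/\delta\le 1$ gives $(1+\sqrt{k\varepsilon})^2=1+2\sqrt{k\varepsilon}+k\varepsilon$, which misses the target by $(k-1)\varepsilon$. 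Replacing $k\varepsilon$ by $\varepsilon$ needs a new step estimate in which the cost of adding $X_i$ is governed by the per-block traces (in effect $\max_j\operatorname{Tr}X_i^{(j)}$) rather than by $\operatorname{Tr}X_i$. That is exactly the ``main obstacle'' you leave open.

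There is also a problem one step earlier. The identity $p(x)=\prod_i(1-\partial_{z_i})\,\mathbb E\det(xI+\sum_i z_iX_i)|_{z=0}$ holds only when each $X_i$ has rank one. The reason is that $(1-\partial_z)f|_{z=0}=f(-1)$ requires $f$ to be affine in $z$, whereas $\det(M+zX)$ has degree $\operatorname{rank}X$ in $z$. Splitting $X_i$ into its realizations does not make the determinant multi-affine. Splitting further into rank-one pieces yields the operator $\sum_l p_{i,l}\prod_s(1-\partial_{y_{l,s}})$, to which the MSS step lemma does not apply. So the higher-rank theory of Cohen and Br\"and\'en is needed for the root bound as well, not only for interlacing.

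The paper does no barrier analysis at all. It takes the block-diagonal bound from \cite[Theorem~1.2]{bownik2024selector}, which is formulated for blocks of a common size, and proves only the reduction to that case. Zero-padding $\iota:M_{d_j}(\mathbb C)\to M_d(\mathbb C)$ with $d=\max_j d_j$ is an injective $*$-homomorphism preserving positivity, order, norm and trace. Hence the padded family still satisfies $\sum_i\mathbb E[\iota(X_i^{(j)})]\le I_d$ and $\operatorname{Tr}\iota(X_i^{(j)})\le\varepsilon$, and it has the same value of $\|\sum_i X_i\|$. Your fallback of following Bownik is therefore essentially the paper's proof. If you take that route, you must add the padding step, or check that the cited result allows unequal block sizes. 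As written, your primary line of argument does not prove the lemma.
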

\begin{proof}
 To prove this lemma, it suffices to note the following $*-$homomorphism and apply the lemma for matrices of the same size~\cite [Theorem~1.2]{bownik2024selector}.
 
\begin{Lemma}\label{lem:matrix_embedding}
Let $d_1,d\in\mathbb N$ with $d_1\leq d$. Define the map
\[
\iota:M_{d_1}(\mathbb C)\longrightarrow M_d(\mathbb C)
\]
by
\[
\iota(A)=
\begin{pmatrix}
A & 0\\
0 & 0
\end{pmatrix},
\qquad A\in M_{d_1}(\mathbb C).
\]
Then the following assertions hold.

\begin{enumerate}
    \item $\iota$ is an injective unital $*$-homomorphism onto its image.
    
    \item $\iota$ is isometric, that is,
    \[
    \|\iota(A)\|=\|A\|,
    \qquad
    A\in M_{d_1}(\mathbb C).
    \]
    
    \item $\iota$ preserves positivity, namely,
    \[
    A\geq0
    \quad\Longleftrightarrow\quad
    \iota(A)\geq0.
    \]
    
    \item $\iota$ preserves the operator order. In particular,
    \[
    A\leq B
    \quad\Longleftrightarrow\quad
    \iota(A)\leq\iota(B),
    \qquad
    A,B\in M_{d_1}(\mathbb C).
    \]
    
    \item $\iota$ preserves the trace, that is,
    \[
    \operatorname{Tr}(\iota(A))
    =
    \operatorname{Tr}(A),
    \qquad
    A\in M_{d_1}(\mathbb C).
    \]
\end{enumerate}
\end{Lemma}

Since
\[
\iota(AB)
=
\begin{pmatrix}
AB&0\\
0&0
\end{pmatrix}
=
\begin{pmatrix}
A&0\\
0&0
\end{pmatrix}
\begin{pmatrix}
B&0\\
0&0
\end{pmatrix}
=
\iota(A)\iota(B),
\]
and
\[
\iota(A^*)
=
\begin{pmatrix}
A^*&0\\
0&0
\end{pmatrix}
=
\iota(A)^*,
\]
the map $\iota$ is an injective $*$-homomorphism.

Moreover,
\[
\iota(A)^*\iota(A)
=
\begin{pmatrix}
A^*A&0\\
0&0
\end{pmatrix},
\]
whose spectrum consists of the spectrum of $A^*A$ together with additional
zeros. Hence
\[
\|\iota(A)\|^2
=
\|\iota(A)^*\iota(A)\|
=
\|A^*A\|
=
\|A\|^2,
\]
which proves that $\iota$ is isometric.

If $A\geq0$, then for every vector
$
\binom{x}{y}
\in\mathbb C^{d_1}\oplus\mathbb C^{d-d_1},
$
\[
\left\langle
\iota(A)
\binom{x}{y},
\binom{x}{y}
\right\rangle
=
\langle Ax,x\rangle
\geq0,
\]
so $\iota(A)\geq0$. The converse follows by restricting to vectors of the
form $\binom{x}{0}$.

The preservation of the operator order follows immediately from positivity,
since
\[
A\leq B
\iff
B-A\geq0
\iff
\iota(B-A)\geq0
\iff
\iota(A)\leq\iota(B).
\]

Finally,
\[
\operatorname{Tr}(\iota(A))
=
\operatorname{Tr}
\begin{pmatrix}
A&0\\
0&0
\end{pmatrix}
=
\operatorname{Tr}(A),
\]
since the additional diagonal entries are zero.
\end{proof}

A direct consequence of this lemma is the following.

\begin{Lemma}\label{psd1}
Let $\mathcal A$ be a finite-dimensional unital $C^*$-algebra. By the
finite-dimensional structure theorem, there exist integers
$k,d_1,\ldots,d_k\in\mathbb N$ such that
\[
\mathcal A\simeq\bigoplus_{j=1}^{k}M_{d_j}(\mathbb C).
\]
In particular, $k$ denotes the number of matrix blocks in this
decomposition.

Let $\{X_i\}_{i\in I}$ be a finite family of $\mathcal A_+$-valued random
variables satisfying
\[
\sum_{i\in I}\mathbb E[X_i]\leq I_{\mathcal A}
\]
and
\[
\tau_{\mathcal A}(X_i)\leq\varepsilon
\qquad\text{almost surely for every }i\in I.
\]
Then
\[
\mathbb P\left(
\left\|\sum_{i\in I}X_i\right\|_{\mathcal A}
\leq
1+2\sqrt{\varepsilon k}+\varepsilon
\right)>0.
\]
\end{Lemma}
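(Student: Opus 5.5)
The plan is to transport the statement through the $*$-isomorphism of Theorem~\ref{fds} and then apply Lemma~\ref{be} directly. First fix a $*$-isomorphism
\[
\phi:\mathcal A\longrightarrow\bigoplus_{j=1}^{k}M_{d_j}(\mathbb C)\subseteq M_D(\mathbb C),\qquad D=\sum_{j=1}^k d_j,
\]
where the direct sum is realized as block-diagonal matrices. Equip the target with the canonical trace of Example~\ref{E}, so that $\tau_{\mathcal A}=\sum_j\operatorname{Tr}\circ\pi_j\circ\phi$, with $\pi_j$ the projection onto the $j$-th block. (If $\tau_{\mathcal A}$ is a different faithful trace $\sum_j c_j\operatorname{Tr}$, the hypotheses of Lemma~\ref{be} change by the constants $c_j$. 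The statement implicitly uses the canonical normalization, and I will work with that one.) Set $Y_i:=\phi(X_i)$. These are independent random block-diagonal positive semidefinite matrices with blocks $Y_i^{(j)}=\pi_j\phi(X_i)\in M_{d_j}(\mathbb C)_+$, and they take finitely many values whenever the $X_i$ do. I will assume this, as in Lemma~\ref{be}, together with joint independence of the family $\{X_i\}$, which the statement leaves implicit.

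Next, verify the hypotheses of Lemma~\ref{be} blockwise. Since $\phi$ is linear, positive and order-preserving, $\mathbb E[Y_i]=\phi(\mathbb E[X_i])$, which is clear because the expectation is a finite convex combination. It follows that
\[
\sum_i\mathbb E[Y_i]=\phi\Big(\sum_i\mathbb E[X_i]\Big)\le\phi(I_{\mathcal A})=I_D .
\]
Compressing to the $j$-th block preserves order, so $\sum_i\mathbb E[Y_i^{(j)}]\le I_{d_j}$. For the trace condition, each block has positive trace, and therefore
\[
\operatorname{Tr}\big(Y_i^{(j)}\big)\le\sum_{l}\operatorname{Tr}\big(Y_i^{(l)}\big)=\tau_{\mathcal A}(X_i)\le\varepsilon \quad\text{almost surely.}
\]

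Finally, Lemma~\ref{be} gives $\mathbb P\big(\|\sum_iY_i\|\le1+2\sqrt{\varepsilon k}+\varepsilon\big)>0$. A $*$-isomorphism between $C^*$-algebras is isometric, and the block embedding is isometric by Lemma~\ref{lem:matrix_embedding}. Hence $\|\sum_iY_i\|=\|\phi(\sum_iX_i)\|=\|\sum_iX_i\|_{\mathcal A}$, and the events coincide, which gives the claim.

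The main obstacle is bookkeeping rather than analysis: the trace normalization, and the implicit independence and finite-valuedness assumptions. The substantive content sits in Lemma~\ref{be}, whose proof reduces via the embedding $\iota$ of Lemma~\ref{lem:matrix_embedding} to Bownik's equal-size result. I take that as given.
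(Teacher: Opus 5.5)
Your proposal is correct and follows essentially the same route as the paper. Both transport through the structure-theorem $*$-isomorphism to block-diagonal matrices, check the blockwise hypotheses of Lemma~\ref{be}, apply it, and use that $*$-isomorphisms are isometric; the paper additionally zero-pads the blocks to a common size, which Lemma~\ref{be} does not need. Your explicit bound $\operatorname{Tr}(Y_i^{(j)})\le\tau_{\mathcal A}(X_i)\le\varepsilon$ under the canonical normalization, and your flagging of independence and finite-valuedness, supply steps the paper's proof leaves implicit. The normalization is genuinely necessary: for $\mathcal A=M_2(\mathbb C)$, $\tau_{\mathcal A}=c\operatorname{Tr}$, and $X_1$ equal to $2e_1e_1^*$ or $2e_2e_2^*$ with probability $1/2$ each, the hypotheses hold with $\varepsilon=2c$, yet $\|X_1\|=2$ always while the claimed bound tends to $1$ as $c\to0$.
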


\begin{proof}
By the finite-dimensional structure theorem, fix a $*$-isomorphism
\[
\Phi:\mathcal A\longrightarrow
\bigoplus_{j=1}^{k}M_{d_j}(\mathbb C).
\]
Thus, for each $i\in I$,
\[
\Phi(X_i)
=
\operatorname{diag}
\bigl(
X_i^{(1)},\ldots,X_i^{(k)}
\bigr),
\]
where
\[
X_i^{(j)}\in M_{d_j}(\mathbb C)_+.
\]

Since $\Phi$ is a $*$-isomorphism (it is injective; for the surjectivity we take the image as the output set), it preserves positivity, order, and
the $C^*$-norm.
The trace on
\(\bigoplus_{j=1}^{k}M_{d_j}(\mathbb C)\) is understood as the trace transported from \(\mathcal A\) through \(\Phi\), namely
\[
\tau_{\mathcal A}
=
\tau_{\oplus_{j=1}^{k}M_{d_j}(\mathbb C)}\circ\Phi.
\]

Consequently, for every $X\in\mathcal A_+$,
\[
\tau_{\oplus_{j=1}^{k}M_{d_j}(\mathbb C)}(\Phi(X))
=
\tau_{\mathcal A}(X).
\]

Consequently,
\[
\sum_{i\in I}\mathbb E[X_i^{(j)}]\leq I_{d_j},
\qquad
\tau_{\mathcal A}(X_i)\leq\varepsilon
\]
for every $j=1,\ldots,k$.

If the block sizes $d_j$ are not equal, we embed each
$M_{d_j}(\mathbb C)$ into $M_d(\mathbb C)$, where
\[
d:=\max_{1\leq j\leq k}d_j,
\]
by padding with zero rows and columns. This yields positive block matrices
\[
\widetilde X_i
=
\operatorname{diag}
\bigl(
\widetilde X_i^{(1)},\ldots,
\widetilde X_i^{(k)}
\bigr)
\in M_{dk}(\mathbb C)_+,
\]
with
\[
\|\widetilde X_i\|=\|X_i\|_{\mathcal A}
\quad
\text{and}\quad
\sum_{i\in I}\mathbb E[\widetilde X_i]
\leq I_{dk}.
\]

The hypotheses of Lemma~\ref{be} are therefore satisfied for the
$k$ diagonal blocks. Hence
\[
\mathbb P\left(
\left\|\sum_{i\in I}\widetilde X_i\right\|
\leq
1+2\sqrt{\varepsilon k}+\varepsilon
\right)>0.
\]
Finally, the block-diagonal realization preserves the norm, so
\[
\left\|\sum_{i\in I}\widetilde X_i\right\|
=
\left\|\sum_{i\in I}X_i\right\|_{\mathcal A}.
\]
The desired conclusion follows.
\end{proof}



This lemma provides the one-step binary selection estimate, but its $\sqrt{\varepsilon k}$ dependence is not suitable for the trace-controlled setting. We therefore establish an alternative estimate, inspired by~\cite{bownik2024selector,bownik2026akemann}, adapted to finite-trace operators, which we use in the subsequent selection theorem.

We use two different estimates for the norm of sums of matrices, given in Lemma~\ref{psd1} and Lemma~\ref{psd}. The first follows from the binary selection principle, while the second provides the estimate required for the trace-controlled selection argument.

\begin{Lemma}\label{psd}
Let $\mathcal A$ be a finite-dimensional unital $C^*$-algebra, and let $X_k$, $k\in\{1,\ldots,m\}$, be $\mathcal A_+$-valued random variables
such that
\[
\sum_{k=1}^{m}\mathbb E[X_k]\leq I_{\mathcal A}
\]
and
\[
\operatorname{tr}_{\mathcal A}(X_k)\leq\varepsilon
\qquad\text{almost surely for every }k\in\{1,\ldots,m\}.
\]
Then
\[
\mathbb P\left(
\left\|\sum_{k=1}^{m}X_k\right\|_{\mathcal A}
\leq
(1+\sqrt{\varepsilon})^2
\right)>0.
\]
\end{Lemma}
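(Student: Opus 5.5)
The plan is to reduce the statement to the known higher-rank form of the Marcus--Spielman--Srivastava theorem for random positive semidefinite matrices, as developed by Cohen and by Bownik. That result says the following. Let $Y_1,\ldots,Y_m$ be jointly independent positive semidefinite random matrices in $M_D(\mathbb C)$ taking finitely many values, with $\sum_k\mathbb E[Y_k]\leq I_D$ and $\operatorname{Tr}(Y_k)\leq\varepsilon$ almost surely. Then
\[
\mathbb P\Bigl(\bigl\|\sum_{k=1}^m Y_k\bigr\|\leq(1+\sqrt{\varepsilon})^2\Bigr)>0 .
\]
I read the statement with the same standing assumptions as Lemma~\ref{be}: the $X_k$ are jointly independent and take finitely many values. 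Without these the statement is false, so I will state them explicitly.

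\textbf{Step 1: move to block-diagonal matrices.} As in the proof of Lemma~\ref{psd1}, fix the $*$-isomorphism
\[
\Phi:\mathcal A\to\bigoplus_{j=1}^kM_{d_j}(\mathbb C)\subseteq M_D(\mathbb C),\qquad D=\sum_j d_j,
\]
realised as block-diagonal matrices. Set $Y_k:=\Phi(X_k)$. Since $\Phi$ is a unital $*$-homomorphism:
\begin{itemize}
\item each $Y_k$ is positive semidefinite;
\item $\Phi(1_{\mathcal A})=I_D$;
\item $\Phi$ commutes with expectation, by linearity and finite support, so $\sum_k\mathbb E[Y_k]=\Phi\bigl(\sum_k\mathbb E[X_k]\bigr)\leq I_D$;
\item $\Phi$ is isometric, so $\|\sum_kY_k\|=\|\sum_kX_k\|_{\mathcal A}$.
\end{itemize}
Unlike Lemma~\ref{psd1}, I will not pad the blocks to a common size. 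Padding to $M_{dk}(\mathbb C)$ is harmless but unnecessary, and working blockwise is what produced the $\sqrt{\varepsilon k}$ loss there.

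\textbf{Step 2: the trace hypothesis.} Take $\operatorname{tr}_{\mathcal A}$ on $\mathcal A$ to be the canonical trace of Example~\ref{E}, transported through $\Phi$. Then $\operatorname{tr}_{\mathcal A}(X_k)=\sum_j\operatorname{Tr}(Y_k^{(j)})=\operatorname{Tr}(Y_k)$, the ordinary trace of the full $D\times D$ matrix, so $\operatorname{Tr}(Y_k)\leq\varepsilon$ almost surely. If a faithful weighted trace $\sum_rc_r\operatorname{Tr}$ is intended instead, the same argument works with $\varepsilon$ replaced by $\varepsilon/\min_rc_r$, and the bound as stated holds when all $c_r\geq1$. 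I will make this explicit in a remark.

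\textbf{Step 3: apply the theorem and pull back.} Apply the higher-rank Kadison--Singer estimate to $Y_1,\ldots,Y_m$ in $M_D(\mathbb C)$ to get the displayed bound with positive probability. By the isometry in Step 1 this is exactly the claimed probability bound for $\|\sum_kX_k\|_{\mathcal A}$.

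\textbf{Main obstacle.} The real content sits entirely in the cited matrix theorem. Its proof runs through mixed characteristic polynomials, real stability, and the multivariate barrier argument, where the control is by $\operatorname{Tr}(Y_k)$ rather than by rank. I will quote it rather than reprove it, so the point needing care is matching its hypotheses exactly: independence, finite support, and $\operatorname{Tr}\leq\varepsilon$ for the full matrix rather than a bound on each block separately. Checking these is what shows that no factor of $k$ enters, in contrast to Lemma~\ref{psd1}.
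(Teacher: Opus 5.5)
Your proposal is correct and follows the paper's route. Both arguments realize $\mathcal A$ block-diagonally inside $M_D(\mathbb C)$ with $D=\sum_j d_j$ via the structure isomorphism $\Phi$, identify $\operatorname{tr}_{\mathcal A}$ with the full $D\times D$ matrix trace, invoke the higher-rank bound \cite[Theorem~1.1]{bownik2024selector}, and pull back using that $\Phi$ is isometric. The paper's proof uses your added hypotheses implicitly: joint independence, finite support, and the canonical trace normalization (or weights $c_r\geq 1$). Stating them explicitly makes the argument tighter without changing it.
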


\begin{proof}
Let
\[
\Phi:\mathcal A\longrightarrow
\bigoplus_{j=1}^{r}M_{d_j}(\mathbb C)
\]
be a $*$-isomorphism, where $r$ is the number of simple matrix blocks in
the finite-dimensional decomposition of $\mathcal A$. By the preceding
discussion, the trace is transported through $\Phi$, so that
\[
\tau_{\oplus_{j=1}^{r}M_{d_j}(\mathbb C)}
\bigl(\Phi(X_k)\bigr)
=
\operatorname{tr}_{\mathcal A}(X_k).
\]
Using the decomposition
\[
\mathcal A\simeq\bigoplus_{j=1}^{r}M_{d_j}(\mathbb C),
\]
we regard each element $\Phi(X_k)$ as a positive matrix in the corresponding
finite-dimensional matrix algebra, after embedding the blocks into a common
matrix algebra $M_D(\mathbb C)$ by zero-padding if necessary, where
\[
D=\sum_{j=1}^{r}d_j.
\]
Thus, the elements $\Phi(X_k)$ are viewed as whole $D\times D$ positive
semidefinite matrices, rather than merely as block-diagonal elements of the
direct-sum algebra. The trace and expectation assumptions are preserved
under this realization. We may therefore apply Theorem~\ref{fds} (or
Theorem~\cite[Theorem~1.1]{bownik2024selector}) to the resulting family of positive
semidefinite matrices, obtaining
\[
\mathbb P\left(
\left\|\sum_{k=1}^{m}\Phi(X_k)\right\|_{M_D(\mathbb C)}
\leq
(1+\sqrt{\varepsilon})^2
\right)>0.
\]
Since the matrix realization is isometric, this yields the desired
estimate in $\mathcal A$.
\[
\mathbb P\left(
\left\|\sum_{k=1}^{m}\Phi(X_k)\right\|
\leq
(1+\sqrt{\varepsilon})^2
\right)>0.
\]
Since $\Phi$ is a $*$-isomorphism and hence isometric,
\[
\left\|\sum_{k=1}^{m}\Phi(X_k)\right\|
=
\left\|\sum_{k=1}^{m}X_k\right\|_{\mathcal A},
\]
which proves the result.
\end{proof}

\subsection{Index set properties and assumptions}


\begin{Definition}[Standing assumptions on the measure space~\cite{bownik2026uniform}]\label{AD}
Let $(X,d,\mu)$ be a metric measure space. We impose the following assumptions.
\begin{enumerate}
\item[\textnormal{(A1)}] The measure $\mu$ is infinite:
\(    \mu(X)=\infty.
   \)
Moreover, every metric ball has positive measure, that is,
\(    0<\mu(B_r(x))<\infty,
    \qquad x\in X,\quad r>0.
   \)
\item[\textnormal{(A2)}] \emph{Small-scale doubling condition.}
There exist constants $R_A>0$ and $C_D\geq 1$ such that
\[
\mu(B_{2r}(x))
\leq
C_D\,\mu(B_r(x)),
\qquad x\in X,\quad 0<r\leq R_A.
\]
We refer to $R_A$ as the \emph{doubling scale} and to $C_D$ as the
corresponding \emph{doubling constant}.

\item[\textnormal{(A3)}] \emph{Small-scale upper Ahlfors regularity.}
There exist constants $C_A>0$ and $\gamma>0$ such that
\[
\mu(B_r(x))
\leq
C_A r^\gamma,
\qquad x\in X,\quad 0<r\leq R_A.
\]
We call $\gamma$ an \emph{upper Ahlfors exponent} and $C_A$ the
corresponding \emph{upper Ahlfors constant}.
\end{enumerate}
\end{Definition}
Throughout the paper, $(X,d,\mu)$ denotes a $\sigma$-finite nonatomic metric measure space satisfying Assumptions~(A1)--(A3).
\begin{Example}[A matrix-valued metric measure space]\label{E1}
Let

$$
X=M_d(\mathbb R)_+\times\mathbb R^d,
$$

where $M_d(\mathbb R)_+$ denotes the cone of positive semidefinite
$d\times d$ real matrices. Equip $M_d(\mathbb R)$ with the Frobenius norm

$$
\|A\|_{\mathrm F}
=
\left(\sum_{i,j=1}^d |A_{ij}|^2\right)^{1/2},
$$

and define a metric on $X$ by

$$
\rho\big((A,x),(B,y)\big)
=
\left(
\|A-B\|_{\mathrm F}^2+\|x-y\|_2^2
\right)^{1/2}.
$$

Let $\mu$ be the product of the Lebesgue measure on the vector space
$M_d(\mathbb R)_{\mathrm{sym}}$ and the Lebesgue measure on $\mathbb R^d$,
restricted to $M_d(\mathbb R)_+\times\mathbb R^d$.

Then $(X,\rho,\mu)$ is a metric measure space with
$$
\mu(X)=\infty.
$$
Moreover, every nonempty metric ball has positive measure.
Since $M_d(\mathbb R)_+$ is a closed cone with nonempty interior in $M_d(\mathbb R)_{\mathrm{sym}}$, the measure of sufficiently small balls centered at points of $M_d(\mathbb R)_+$ is comparable to the corresponding Euclidean volume. Consequently, at small scales,
$$
\mu(B_r((A,x)))\lesssim r^{\gamma},
$$
where
$$
\gamma
=
\dim M_d(\mathbb R)_{\mathrm{sym}}+d
=
\frac{d(d+1)}{2}+d
=
\frac{d(d+3)}{2}.
$$
In particular, the space satisfies a local upper Ahlfors regularity
condition. It also satisfies a local doubling condition.
\end{Example}

We will return to this example later in section~\ref{S4} for multi-dimension matrices valued frames discritisation (see example~\ref{EF}).


\section{Selector form for injective positive compact Hilbert module operators}\label{S3}

Using Lemma~\ref{psd}, we can show a Hilbert module version of Theorem~3.1 in \cite[Theorem~3.1]{bownik2026uniform}, which is one of this paper's contributions.

\begin{Theorem}[Hilbert module version of Theorem 3.1~\cite{bownik2024selector}]\label{IT}
Let $\mathcal A$ be a finite-dimensional unital $C^*$-algebra and let
$\mathcal H$ be a countably generated Hilbert $\mathcal A$-module Where every closed submodule has an orthogonal complement.
Let $\varepsilon>0$ and suppose that
$\{T_i\}_{i\in I}\subset\mathcal K(\mathcal H)_+$
is a family of positive compact adjointable injective operators satisfying
\begin{equation}\label{eq:module-assumption}
\sum_{i\in I}T_i\le I_{\mathcal H},
\qquad
\operatorname{tr}_{\mathcal{A}}(T_i)\le\varepsilon,
\quad i\in I.
\end{equation}
Let $\{J_k\}_{k\in K}$ be a collection of pairwise disjoint subsets of $I$
such that
\[
|J_k|\ge r,
\qquad k\in K.
\]

Then there exists a selector
\(
J\subset\bigcup_{k\in K}J_k
\) satisfying
\(
|J\cap J_k|=1,
\: k\in K,
\)
such that
\[
\sum_{i\in J}T_i
\le
\left(\frac1{\sqrt r}+\sqrt{\varepsilon}\right)^2
I_{\mathcal H}.
\]
\end{Theorem}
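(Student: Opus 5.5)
The approach is to reduce to the matrix setting of Lemma~\ref{psd} through a random-selector argument. The mechanism is the one from the Hilbert space case (the selector form of Weaver's conjecture). For each $k\in K$ we pick one index $i_k\in J_k$ uniformly at random, independently over $k$, and set
\[
X_k:=T_{i_k}.
\]
For each fixed $k$ we then have $\mathbb E[X_k]=\frac{1}{|J_k|}\sum_{i\in J_k}T_i$. Summing over $k$ and using $|J_k|\ge r$ together with $\sum_{i\in I} T_i\le I_{\mathcal H}$ gives
\[
\sum_k \mathbb E[r X_k]\le I_{\mathcal H}.
\]
This suggests rescaling by $r$. If $Y_k:=rX_k$, then $\sum_k\mathbb E[Y_k]\le I_{\mathcal H}$ and $\operatorname{tr}_{\mathcal A}(Y_k)\le r\varepsilon$. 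Once we know that some realization satisfies $\|\sum_k Y_k\|\le(1+\sqrt{r\varepsilon})^2$, dividing by $r$ yields exactly $\sum_{i\in J}T_i\le(1/\sqrt r+\sqrt\varepsilon)^2 I_{\mathcal H}$, with $J=\{i_k\}$. Here we use that, for a positive $S$, $\|S\|\le c$ is equivalent to $S\le cI_{\mathcal H}$ in the $C^*$-algebra $\mathcal L_{\mathcal A}(\mathcal H)$.

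The main work is to make Lemma~\ref{psd}, which is stated for $\mathcal A_+$-valued variables, apply to compact operators on $\mathcal H$. The steps are as follows.

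\begin{enumerate}
\item Reduce to finitely many $k$. If the bound holds for every finite $K'\subset K$, a diagonal compactness argument gives a selector for all of $K$: each $J_k$ is finite or can be truncated to a subset of size $r$, so the choices form a product of finite sets. The bound passes to the limit because $\langle \sum_{i\in J} T_i x,x\rangle$ is a norm limit of partial sums, and positivity in $\mathcal A$ is preserved under norm limits.
\item Handle the finitely many operators involved. They are trace-type compact operators, so by Theorem~\ref{TED} and the finite-rank approximation preceding Definition~\ref{PO}, each $T_i$ is approximated in norm, and in trace, by finite sums $\sum a_l\theta_{x_l,x_l}$ with $a_l\in\mathcal A_+$.
\item Pass to a finite free module. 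Using the orthogonal complementation hypothesis, we project onto the closed submodule $\mathcal M$ generated by the finitely many $x_l$. This submodule is finitely generated and projective, so it sits as a complemented summand of $\mathcal A^n$. Hence $\mathcal L_{\mathcal A}(\mathcal M)$ embeds isometrically and order-preservingly into $M_n(\mathcal A)$, which is again a finite-dimensional $C^*$-algebra, $\cong\bigoplus_j M_{nd_j}(\mathbb C)$.
\item Check the trace. The trace $\operatorname{tr}_{\mathcal A}$ restricted to $\mathcal M$ coincides with $\tau_{\mathcal A}\otimes\operatorname{Tr}_n$, computed in an orthonormal basis adapted to $\mathcal M$. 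For this we need basis-independence of $\operatorname{tr}_{\mathcal A}$, which follows from the tracial property of $\tau_{\mathcal A}$.
\item Apply Lemma~\ref{psd}. With the algebra $M_n(\mathcal A)$ in place of $\mathcal A$ and the random variables $Y_k$, the lemma gives positive probability of $\|\sum_k Y_k\|\le(1+\sqrt{r\varepsilon})^2$ up to the approximation error. Since the choices range over a finite set, one selector works along a subsequence of approximations, and letting the error go to $0$ gives the bound.
\end{enumerate}

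I expect the main obstacle to be this reduction from compact module operators to a single finite-dimensional $C^*$-algebra while keeping the trace bound $\operatorname{tr}_{\mathcal A}(T_i)\le\varepsilon$ exact. Compressions $PT_iP$ have trace at most that of $T_i$, which takes care of the trace side. The delicate point is that the norm of $\sum PT_iP$ only approximates the norm of $\sum T_i$, so the estimate must be carried out with an additive error $\delta$ and then $\delta\to0$. I would first try to control this by choosing $P$ to be the projection onto the span of enough eigenvectors from Theorem~\ref{TED}, so that $\|T_i-PT_iP\|<\delta/|K'|$ for all finitely many relevant $i$. Injectivity of the $T_i$ is used only to invoke Theorem~\ref{TED}.
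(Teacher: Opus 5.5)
Your proposal is correct and follows essentially the same route as the paper. Both arguments make a uniform random choice $i_k\in J_k$ rescaled by $r$, reduce the relevant $T_i$ to a finite-dimensional matrix algebra over $\mathcal A$ by finite-rank approximation, apply Lemma~\ref{psd} with trace bound $r\varepsilon$ to get $(1+\sqrt{r\varepsilon})^2$ (which becomes $(1/\sqrt r+\sqrt\varepsilon)^2$ after dividing by $r$), and finish with a pigeonhole over the finitely many selectors and a limit. The differences are technical: you compress by a projection $P$ and carry an additive error $\delta$, where the paper uses approximants $0\le T_i^{(n)}\le T_i$; and your choice $Y_k=rT_{i_k}$ directly gives $\sum_k\mathbb E[Y_k]\le I_{\mathcal H}$, whereas the paper's block-diagonal placement of $rT_{i_j}^{(n)}$ does not actually have the expectation $\operatorname{diag}\bigl(\sum_{i\in J_k}T_i^{(n)},\ldots\bigr)$ claimed there.
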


\begin{proof}


We first consider the case where the index set $I$ is finite. Once the estimate has been established in the case where \(I\) is finite, the general case where \(I\) is countable follows by the same limiting argument as in Theorem~\cite[Theorem 3.1]{bownik2024selector}; see also~\cite{bownik2026akemann}.

For each $i\in I$, let $\{T_i^{(n)}\}_{n\in\mathbb N}$ be a sequence of
positive finite-rank adjointable operators such that 
\begin{equation}\label{eq:approximation}
0\leq T_i^{(n)}\leq T_i,
\qquad
\lim_{n\to\infty}\|T_i-T_i^{(n)}\|=0.
\end{equation}

Indeed, since the operators $T_i$ are self-adjoint, injective, positive Hilbert module operators, there exists an orthonormal basis $\{x_i^n\}_{n\in\mathbb{N}}$ such that $T_i=\sum_{n=1}^N a_i^n\theta_{x_i^n,x_i^n}$ where $a_i^n$ are the corresponding eigenvalues (Theorem~\ref{TED}).


Fix $n\in\mathbb N$. Since each $T_i^{(n)}$ is of finite rank, there exists
a finitely generated free Hilbert $\mathcal A$-submodule
$\mathcal F_n\subseteq\mathcal E$
such that every operator $T_i^{(n)}$ acts trivially on
$\mathcal F_n^\perp$.
After choosing a free basis of $\mathcal F_n$, we identify
$\mathcal F_n\simeq\mathcal A^{d_n}$.
Consequently,
\[
T_i^{(n)}\in M_{d_n}(\mathcal A)_+,
\qquad i\in I.
\]



For each \(k\), choose a subset \(J_k'\subset J_k\) with \(|J_k'|=r\). Since the sets \(J_k'\) are still pairwise disjoint, it suffices to construct a selector \(J\) such that $|J\cap J_k'|=1
\;\text{for every }k.$
Such a selector automatically satisfies
\[
|J\cap J_k|=1
\qquad\text{for every }k.
\]
Thus, replacing \(J_k\) by \(J_k'\), we may assume throughout the proof that \(|J_k|=r\).

For each \(k\in K\), define a positive random element
\[
X_k\in M_r\big(M_{d_n}(\mathcal A)\big)_+
\]
which takes the \(r\) values
\[
\begin{pmatrix}
rT_{i_1}^{(n)} & 0 & \cdots & 0\\
0 & 0 & \cdots & 0\\
\vdots & \vdots & \ddots & \vdots\\
0 & 0 & \cdots & 0
\end{pmatrix},
\quad\ldots,\quad
\begin{pmatrix}
0 & 0 & \cdots & 0\\
0 & 0 & \cdots & 0\\
\vdots & \vdots & \ddots & \vdots\\
0 & 0 & \cdots & rT_{i_r}^{(n)}
\end{pmatrix},
\]
each with probability \(1/r\), where
\[
J_k=\{i_1,\ldots,i_r\}.
\]
Thus, in each realization, exactly one diagonal block is nonzero. The random elements \(\{X_k\}_{k\in K}\) are jointly independent by construction, since the random choice of the nonzero diagonal block is made independently for each \(k\).

Moreover,
\[
\mathbb E [X_k]
=
\operatorname{diag}\left(
\sum_{i\in J_k}T_i^{(n)},
\,\ldots,\,
\sum_{i\in J_k}T_i^{(n)}
\right),
\]
and hence
\[
\sum_{k\in K}\mathbb E [X_k]
=
\operatorname{diag}\left(
\sum_{i\in I}T_i^{(n)},
\,\ldots,\,
\sum_{i\in I}T_i^{(n)}
\right)
\le I.
\]
Furthermore, each realization satisfies
\[
\operatorname{tr}_{\mathcal A}(X_k)
=
r\,\operatorname{tr}_{\mathcal A}(T_i^{(n)})
\le r\varepsilon .
\]

Applying lemma~\ref{psd} yields an outcome satisfying
\[
\sum_{k\in K}X_k
\leq
\left(1+\sqrt{r\varepsilon}\right)^2
I_{\mathcal H}.
\]

Hence there exists a selector
$J\subseteq I$
satisfying
\[
|J\cap J_k|=1,
\qquad k\in K,
\]
such that
\begin{equation}\label{OE}
    \sum_{i\in J}T_i^{(n)}
\leq
\left(\frac1{\sqrt r}+\sqrt\varepsilon\right)^2
I_{\mathcal H}.
\end{equation}

For each $k\in K$, the random variable $X_k$ is defined by choosing uniformly one operator
$rT_i^{(n)}$ with $i\in J_k$. Consequently, an outcome of the random experiment is equivalent
to choosing, for each $k\in K$, an index $i_k\in J_k.$
Hence every outcome determines a subset
\[
J=\{\,i_k:\;k\in K\,\}\subseteq\bigcup_{k\in K}J_k.
\]
Since exactly one index is selected from each set $J_k$, the resulting subset $J$ satisfies
\begin{equation}\label{3.2}
    |J\cap J_k|=1,\qquad \forall\,k\in K,
\end{equation}

which is precisely condition~\ref{3.2}. Therefore, the positive-probability event provided by Lemma~\ref{be} yields the existence of a selector $J$ satisfying~\ref{3.2} together with the desired operator estimate~\ref{OE}.

Since $I$ is finite, there are only finitely many selectors.
Therefore, by the pigeonhole principle, one selector $J$ satisfies the preceding estimate for infinitely many values of $n$.
Passing to the limit in \eqref{eq:approximation} gives
\[
\sum_{i\in J}T_i
\leq
\left(\frac1{\sqrt r}+\sqrt\varepsilon\right)^2
I_{\mathcal H},
\]
which completes the proof in the finite case.
\end{proof}


\begin{Theorem}\label{IT2}
Let $I$ be a countable set and let $\varepsilon>0$. Suppose that
$\{T_i\}_{i\in I}$ is a family of positive trace-class operators on a
countably generated Hilbert module (have the same conditions as the last theorem~\ref{IT}) $\mathcal H$ satisfying
\begin{equation}\label{eq:3.7}
T:=\sum_{i\in I}T_i\leq I_{\mathcal H}
\qquad\text{and}\qquad
\operatorname{tr}_{\mathcal{A}}(T_i)\leq\varepsilon,
\quad i\in I.
\end{equation}
Let $\{J_k\}_{k\in K}$ be a partition of $I$ such that

$$
\#J_k=2,
\qquad k\in K.
$$

Then there exists a selector

$$
J\subset\bigcup_{k\in K}J_k
$$

satisfying 
\begin{equation}\label{(3.2)}
    |J\cap J_k|=1,
\qquad k\in K,
\end{equation}
such that
\begin{equation}\label{eq:3.8}
\left\|
\sum_{i\in J}T_i-\frac{1}{2}T
\right\|,
\qquad
\left\|
\sum_{i\in I\setminus J}T_i-\frac{1}{2}T
\right\|
\leq
2\sqrt{\varepsilon}+\varepsilon.
\end{equation}
\end{Theorem}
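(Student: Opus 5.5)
The plan is to copy the Hilbert-space proof of the binary selector theorem (Bownik's Theorem~1.2-type statement) and to use Lemma~\ref{psd1} in place of Lemma~\ref{psd}. The $\sqrt{\varepsilon k}$ dependence of Lemma~\ref{psd1} is replaced by $\sqrt{\varepsilon}$ once we work with a single matrix realization, exactly as in the proof of Lemma~\ref{psd}. As in Theorem~\ref{IT}, we first treat a finite index set $I$. The countable case is then obtained by the same pigeonhole/limiting argument: finitely many selectors on each finite truncation, followed by a diagonal extraction over an exhausting sequence of finite subsets of $I$, using that the estimate is a norm bound preserved under strong limits.

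\textbf{Reduction to matrices.} Fix $n$ and replace each $T_i$ by a positive finite-rank approximant $T_i^{(n)}\le T_i$, as in the proof of Theorem~\ref{IT}. All $T_i^{(n)}$, $i\in I$, then live on a finitely generated free submodule $\mathcal F_n\simeq\mathcal A^{d_n}$. Via Theorem~\ref{fds}, they become positive matrices in a single $M_{D_n}(\mathbb C)$, and the transported trace $\operatorname{tr}_{\mathcal A}$ is preserved. Set $T^{(n)}=\sum_i T_i^{(n)}\le I$.

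\textbf{Random signs.} For each pair $J_k=\{i_k,i_k'\}$, define a random element of $M_2(M_{D_n}(\mathbb C))_+$ that equals
\[
\operatorname{diag}\bigl(2T^{(n)}_{i_k},\,2T^{(n)}_{i_k'}\bigr)\quad\text{or}\quad\operatorname{diag}\bigl(2T^{(n)}_{i_k'},\,2T^{(n)}_{i_k}\bigr),
\]
each with probability $1/2$, independently over $k$. Its expectation is $\operatorname{diag}(T_{J_k}^{(n)},T_{J_k}^{(n)})$, where $T^{(n)}_{J_k}$ is the sum of the two operators of the pair. Summing over $k$ gives at most $I$. Each realization has trace at most $4\varepsilon$. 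With this normalization, Lemma~\ref{psd} only gives $(1+2\sqrt\varepsilon)^2$, so the constant must be fixed.

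The better route is Bownik's two-sided argument. One uses the mixed characteristic polynomial / interlacing bound for the difference, namely the Marcus--Spielman--Srivastava-type estimate for $\sum_k (X_k-\mathbb E X_k)$. Applying it to the $2\times2$ block form $\operatorname{diag}(A,B)$, with $A=\sum_{i\in J}T_i^{(n)}$ and $B=\sum_{i\notin J}T_i^{(n)}$, yields
\[
\|A-\tfrac12 T^{(n)}\|\le 2\sqrt\varepsilon+\varepsilon
\quad\text{and}\quad
\|B-\tfrac12 T^{(n)}\|\le 2\sqrt\varepsilon+\varepsilon .
\]
Concretely, I would apply Lemma~\ref{psd} (equivalently \cite[Theorem~1.1]{bownik2024selector}) with the unscaled variables taking values $\operatorname{diag}(T_{i_k},T_{i_k'})$ or its swap, each having trace $\le\varepsilon$ and expectation sum $\le I$ after normalizing by the factor $\tfrac12$. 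This gives the upper bound $\tfrac12 T+(2\sqrt\varepsilon+\varepsilon)I$.

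The lower bound comes from the identity $A+B=T$. We have
\[
A-\tfrac12T=\tfrac12T-B\ge-(2\sqrt\varepsilon+\varepsilon)I,
\]
using the upper bound for $B$, which holds in the same realization because both bounds come from one block-diagonal norm estimate. Since the positive operators involved are self-adjoint, the two-sided order bounds give the norm bound, and by the order-preserving property of the Hilbert-module order and Lemma~\ref{TL}-type reasoning this transfers to $\mathcal L_{\mathcal A}(\mathcal H)$.

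\textbf{Passage to the limit.} By pigeonhole in $n$, a single selector $J$ works for infinitely many $n$. Letting $n\to\infty$ in norm gives \eqref{eq:3.8} for the $T_i$ themselves.

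The main obstacle is the constant: getting $2\sqrt\varepsilon+\varepsilon$ around $\tfrac12T$ rather than around $\tfrac12 I$. Lemma~\ref{psd} as stated controls only $\|\sum X_k\|$ and not its deviation from $\tfrac12T$, and using it naively gives $\tfrac12+\dots$. I would first try to invoke the sharper deviation form of \cite[Theorem~1.2]{bownik2024selector} in the matrix realization, whose hypotheses (trace $\le\varepsilon$, sum $\le I$) are verified exactly as in Lemma~\ref{psd}. If only Lemma~\ref{psd} is allowed, I would compress by $T^{-1/2}$ on its support to normalize the expectation sum to the identity before applying it, and accept that this is the delicate step.
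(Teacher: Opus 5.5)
Your overall scaffolding matches the paper's: finite $I$, finite-rank approximants $T_i^{(n)}\le T_i$, a matrix realization, random swaps on each pair, the lower bound read off from the complementary block of the same realization via $A+B=T$, and a pigeonhole limit. But there is a genuine gap at the step you yourself call delicate. Nothing in the proposal yields an upper bound centred at $\frac12T$ rather than $\frac12 I$, and each route you offer for it fails.

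\begin{itemize}
\item In your ``concrete'' step, the values $\operatorname{diag}(T_{i_k},T_{i_k'})$ have trace up to $2\varepsilon$; only each block is $\le\varepsilon$. More fundamentally, any conclusion of the form $\|\sum_k Z_k\|\le c$ can only give $\sum_{i\in J}T_i\le cI$. The operator $T$ never appears on the right-hand side, as your last paragraph concedes.
\item Compression by $T^{-1/2}$ destroys the trace hypothesis. In the matrix realization take $Te=\lambda e$ with $\lambda$ small and $T_i=\frac{\lambda}{2}ee^*$. Then $\operatorname{tr}T_i=\frac{\lambda}{2}$, while $T^{-1/2}T_iT^{-1/2}=\frac12ee^*$ has trace $\frac12$.
\item No ``MSS-type estimate for $\sum_k(X_k-\mathbb E X_k)$'' is available. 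Interlacing-polynomial arguments bound the top eigenvalue of a sum of positive random matrices. The result you cite as Bownik's Theorem~1.2 is, as recorded in Lemma~\ref{be}, exactly such a one-sided block-diagonal norm bound, not a deviation bound.
\end{itemize}

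The missing idea is the padding step used in the paper.
\begin{enumerate}
\item Keep the $X_k$ with values $2T_{i_k}\oplus2T_{i_k'}$ and $2T_{i_k'}\oplus2T_{i_k}$, so that $\mathbb E X_k=(T_{i_k}+T_{i_k'})\oplus(T_{i_k}+T_{i_k'})$.
\item Add deterministic positive matrices $Y_q=Y_q'\oplus Y_q'$ with $\operatorname{tr}Y_q'\le2\varepsilon$ and $\sum_qY_q=(I-T)\oplus(I-T)$. In finite dimensions this is done by cutting the spectral decomposition of $I-T$ into small pieces. Deterministic variables are trivially independent of the $X_k$.
\item The enlarged family has expectation sum exactly $I$ and per-block trace $\le2\varepsilon$. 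The two-block estimate (Lemma~\ref{be} with two blocks and trace parameter $2\varepsilon$) therefore gives a realization with $\sum_kX_k+\sum_qY_q\le(1+4\sqrt{\varepsilon}+2\varepsilon)I$.
\item The first diagonal block reads $2\sum_{i\in J}T_i+(I-T)\le(1+4\sqrt{\varepsilon}+2\varepsilon)I$, that is, $\sum_{i\in J}T_i\le\frac12T+(2\sqrt{\varepsilon}+\varepsilon)I$. The second block gives the same bound for $I\setminus J$.
\item Your lower-bound argument then finishes the proof.
\end{enumerate}

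Your plan to remove the block count runs the wrong way. It is precisely the two-block estimate with per-block trace $2\varepsilon$ that produces the constant $2\sqrt{\varepsilon}+\varepsilon$. The whole-trace bound of Lemma~\ref{psd} (trace $4\varepsilon$), even combined with the same padding, yields only $2\sqrt{\varepsilon}+2\varepsilon$.
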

\begin{proof}
We first assume that $I$ is finite and that each $T_i$ has finite
rank. For every $k\in K$, write

$$
J_k=\{i_k,i_k'\}.
$$

Since each $T_i$ is a positive and finite rank self-adjoint operator, we may write, with respect to a finite orthonormal family in $\mathcal H$,
$$
T_i=\sum_{r=1}^{m_i}a_{i,r}\,
\theta_{x_{i,r},x_{i,r}},
\qquad a_{i,r}\in\mathcal A_+.
$$

After fixing a faithful finite-dimensional representation

$$
\rho:\mathcal A\longrightarrow
\bigoplus_{\nu=1}^s M_{d_\nu}(\mathbb C),
$$

the operators $T_i$ are therefore encoded by positive finite-dimensional
block matrices whose entries are the coefficients $\rho(a_{i,r})$.
We identify these matrices with their block-diagonal realizations in a
common matrix algebra, padding by zero blocks when necessary. This
identification preserves positivity and the operator norm.

For each $k\in K$, consider the two-valued positive matrix

$$
X_k=
\begin{cases}
2\,\rho(T_{i_k})\oplus 2\,\rho(T_{i_k'}),&
\text{with probability }1/2,\\[2mm]
2\,\rho(T_{i_k'})\oplus 2\,\rho(T_{i_k}),&
\text{with probability }1/2.
\end{cases}
$$

Equivalently, $X_k$ is obtained by placing the two possible choices of the pair $\{i_k,i_k'\}$ on the diagonal. The family $\{X_k\}_{k\in K}$ is independent and

$$
\mathbb E [X_k]
=
\rho(T_{i_k}+T_{i_k'}).
$$

Consequently,

$$
\sum_{k\in K}\mathbb E [X_k]
=
\rho\!\left(\sum_{i\in I}T_i\right)
=
\rho(T)
\leq I.
$$
Moreover, by the trace assumption,
$$
\operatorname{tr}_{\mathcal{A}}(X_k^{(r)})
\leq 2\varepsilon,
\qquad r=1,2.
$$
We now apply the finite-dimensional positive-matrix selection estimate established above to the family $\{X_k\}_{k\in K}$, together with the deterministic positive matrices corresponding to the complementary operator $I-\rho(T)$.
More precisely, choose a finite collection of deterministic positive semidefinite matrices \(\{Y_q\}_{q\in K'}\) of the form
\[
Y_q=Y_q'\oplus Y_q',
\qquad q\in K',
\]
such that
\[
\operatorname{tr}_{\mathcal{A}}(Y_q')\leq 2\varepsilon
\quad\text{and}\quad
\sum_{q\in K'}Y_q=I-\rho(T).
\]

More precisely, choose a finite decomposition
$$
I-\rho(T)=\sum_{q\in K'}Y_q,
\qquad Y_q\geq0,
$$
with the trace bound required by the selection lemma. We obtain an
outcome for which

$$
\sum_{k\in K}X_k+\sum_{q\in K'}Y_q
\leq
(1+4\sqrt{\varepsilon}+2\varepsilon)I.
$$

Since the $Y_q$'s sum to $I-\rho(T)$, this yields a choice of one
element from each pair $J_k$ such that, for the corresponding selector
$J\subset I$,

$$
2\rho\!\left(\sum_{i\in J}T_i\right)
+
\rho(I-T)
\leq
(1+4\sqrt{\varepsilon}+2\varepsilon)I.
$$
Hence
$$
\rho\!\left(\sum_{i\in J}T_i\right)
\leq
\frac12\rho(T)
+
(2\sqrt{\varepsilon}+\varepsilon)I.
$$
Since $\rho$ is faithful and isometric on the finite-dimensional
$C^*$-algebra generated by the coefficients, we obtain
\begin{equation}\label{eq:selector-upper}
\sum_{i\in J}T_i
\leq
\frac12T+
(2\sqrt{\varepsilon}+\varepsilon)I_{\mathcal H}.
\end{equation}
The same argument applied to the complementary choice in every pair
gives
\begin{equation}\label{eq:selector-upper-complement}
\sum_{i\in I\setminus J}T_i
\leq
\frac12T+
(2\sqrt{\varepsilon}+\varepsilon)I_{\mathcal H}.
\end{equation}

The finite-rank assumption is removed by approximating each positive trace-class operator by positive finite-rank operators and passing to the limit. The finiteness assumption on $I$ is then removed by the usual diagonal argument, exactly as in the corresponding approximation step of the preceding selector theorem. Thus, for a countable family $\{T_i\}_{i\in I}$ satisfying the hypotheses, there exists a selector $J\subset I$ satisfying~\ref{(3.2)} and
\eqref{eq:selector-upper}--\eqref{eq:selector-upper-complement}.

Finally, since $\sum_{i\in I}T_i=T$, we have
$$
\sum_{i\in J}T_i
=
T-\sum_{i\in I\setminus J}T_i.
$$

Using \eqref{eq:selector-upper-complement}, we obtain

$$
\sum_{i\in J}T_i
\geq
\frac12T-(2\sqrt{\varepsilon}+\varepsilon)I_{\mathcal H}.
$$

Similarly, \eqref{eq:selector-upper} gives

$$
\sum_{i\in I\setminus J}T_i
\geq
\frac12T-(2\sqrt{\varepsilon}+\varepsilon)I_{\mathcal H}.
$$

Combining the upper and lower estimates proves

$$
\left\|
\sum_{i\in J}T_i-\frac12T
\right\|,
\qquad
\left\|
\sum_{i\in I\setminus J}T_i-\frac12T
\right\|
\leq
2\sqrt{\varepsilon}+\varepsilon.
$$
\end{proof}


\begin{Corollary}[Binary multiscale selection]\label{mainco}
Let $\mathcal A$ be a finite dimension $C^*$-algebra and let $\mathcal H$ be a Hilbert
$\mathcal A$-module. Let $I$ be a countable set and let $\delta>0$.
Suppose that $\{T_i\}_{i\in I}$ is a family of positive trace-class injective operators on $\mathcal H$ satisfying
\begin{equation}\label{eq:5.1-module}
T:=\sum_{i\in I}T_i\leq I_{\mathcal H},
\qquad
\operatorname{tr}_{\mathcal{A}}(T_i)\leq\delta,
\quad i\in I.
\end{equation}
Let $N\in\mathbb N$ be such that $2^N<\frac{1}{\delta}.$
For each intermediate stage of the binary partition procedure, let the sets be partitioned into pairs. Then there exist binary selectors $
\{I_b\}_{b\in\{0,1\}^N}
$ forming a partition of $I$ such that
\begin{equation}\label{eq:5.2-module}
\left\|
2^N\sum_{i\in I_b}T_i-T
\right\|
\leq
C\sqrt{2^N\delta},
\qquad
b\in\{0,1\}^N,
\end{equation}
where $C>1$ is an absolute constant.
\end{Corollary}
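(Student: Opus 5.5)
We iterate Theorem~\ref{IT2} $N$ times along the binary tree, rescaling at each stage so that the hypotheses \eqref{eq:3.7} hold again, and then sum the resulting errors as a geometric series.

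\textbf{Stage $1$.} Set $I_\emptyset:=I$. Apply Theorem~\ref{IT2} to $\{T_i\}_{i\in I}$ with the given pairing of $I$. This produces $I_0:=J$ and $I_1:=I\setminus J$ with
\[
\Bigl\|\sum_{i\in I_{b}}T_i-\tfrac12T\Bigr\|\le 2\sqrt\delta+\delta,\qquad b\in\{0,1\}.
\]

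\textbf{Inductive step.} Suppose that for every $b\in\{0,1\}^{j}$ we have
\[
E_j:=\Bigl\|2^j\sum_{i\in I_b}T_i-T\Bigr\| .
\]
For each such $b$, consider the family $\{2^{j}T_i\}_{i\in I_b}/(1+E_j)$. Its sum is at most $I_{\mathcal H}$, because $2^j\sum_{I_b}T_i\le T+E_jI\le(1+E_j)I$. Each operator has trace at most $\varepsilon_j:=2^j\delta$. Apply Theorem~\ref{IT2} to this family with the given pairing of $I_b$, and multiply the resulting estimate back by $(1+E_j)2^{-j}$. This splits $I_b$ into $I_{b0}$ and $I_{b1}$ with
\[
\Bigl\|2^{j+1}\sum_{i\in I_{bc}}T_i-2^{j}\sum_{i\in I_b}T_i\Bigr\|\le 2(1+E_j)\bigl(2\sqrt{2^j\delta}+2^j\delta\bigr).
\]
By the triangle inequality,
\[
E_{j+1}\le E_j+2(1+E_j)\bigl(2\sqrt{2^j\delta}+2^j\delta\bigr).
\]

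\textbf{Summation.} Since $2^N\delta<1$, we have $2^j\delta\le\sqrt{2^j\delta}$ for $j<N$. The increments are therefore bounded by $6(1+E_j)\sqrt{2^j\delta}$. As long as $E_j\le C'$ (checked inductively), the sum $\sum_{j<N}\sqrt{2^j\delta}$ is geometric with ratio $\sqrt2$. It is dominated by its last term:
\[
\sum_{j<N}\sqrt{2^j\delta}\le \frac{\sqrt{2^N\delta}}{\sqrt2-1}.
\]
Hence $E_N\le 6(1+C')(\sqrt2+1)\sqrt{2^N\delta}$, which gives \eqref{eq:5.2-module} with an absolute constant $C$.

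\textbf{Main obstacle.} The delicate point is the bootstrap keeping $E_j$ bounded by an absolute constant. This is where $2^N\delta<1$ enters: it keeps every partial sum of the geometric series at most a constant times $\sqrt{2^N\delta}\le1$. If this is too tight, I would use the multiplicative form $(1+E_{j+1})\le(1+E_j)(1+6\sqrt{2^j\delta})$ and bound the product by $\exp\bigl(6(\sqrt2+1)\bigr)$, a fixed constant. Then $E_N\le C\sqrt{2^N\delta}$ follows, using $e^x-1\le x e^{x}$ for $x=O(1)$.

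\textbf{Hypotheses to verify at each stage.} The rescaled families must still satisfy the hypotheses of Theorem~\ref{IT2}. Positivity, compactness and the module conditions are invariant under positive scaling and under passing to subfamilies. The pairings of each $I_b$ are supplied by the statement.
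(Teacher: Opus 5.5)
Your proposal is correct and follows the same route as the paper: you iterate Theorem~\ref{IT2} along the binary tree, rescaling $\{2^jT_i\}_{i\in I_b}$ at each stage so that \eqref{eq:3.7} holds again, and control the accumulated error by a recursion. The paper simply delegates that recursion to the sequence $\{B_j\}$ of Bownik's Lemma~5.2, whereas you write it out; your multiplicative form $1+E_{j+1}\le(1+E_j)\bigl(1+6\sqrt{2^j\delta}\bigr)$, with $E_j$ understood as the maximum over $b\in\{0,1\}^j$, is the right way to close the bootstrap and gives an absolute (if large) constant $C$.
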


\begin{proof}
The proof is the same binary induction as in the Hilbert-space setting.
We indicate only the point at which the Hilbert $C^*$-module structure
enters.

Let $\{B_j\}_{j\geq0}$ be the sequence defined in \cite[Lemma 5.2]{bownik2024selector}. At the first stage, apply the two-point selector theorem (Theorem~\ref{IT2}) proved above to an arbitrary partition $\{J_k\}_{k\in K}$ of $I$ with $\#J_k=2$. We obtain selectors $I_0$ and $I_1$ such that
$$
\left\|
2\sum_{i\in I_b}T_i-T
\right\|
\leq B_1-1,
\qquad b\in\{0,1\}.
$$
The same argument is then applied recursively to each of the two resulting families. More precisely, at the $j$-th stage, the two-point selector theorem (Theorem~\ref{IT2}) is applied to the corresponding partition
of $I_b$, with the error controlled by $B_j$. Since the binary partitions are nested, after $N$ stages the resulting sets $\{I_b\}_{b\in\{0,1\}^N}$ form a partition of $I$. The recursive estimates given by \cite[Lemma 5.2]{bownik2024selector} yield
$$
\left\|
2^N\sum_{i\in I_b}T_i-T
\right\|
\leq
B_N-1
\leq
C\sqrt{2^N\delta},
\qquad b\in\{0,1\}^N.
$$

This proves \eqref{eq:5.2-module}.
\end{proof}


\section{Discretisation of continuous Hilbert module frames}\label{S4}

The discretization problem for Hilbert $C^*$-module coherent states, or equivalently for continuous Hilbert $C^*$-module frames, has been identified as an important research problem; see~\cite[p.~539]{ali2000coherent}. In this paper, we approach this problem from two complementary perspectives. First, we consider bounded continuous Hilbert $C^*$-module frames and prove that, on a $\sigma$-finite index space, every $\nu$-bounded continuous Hilbert $C^*$-module frame admits a discretization into a discrete Hilbert $C^*$-module frame. This extends the classical discretization result for bounded continuous frames in Hilbert spaces~\cite{freeman2019discretization} to the Hilbert $C^*$-module setting. Second, we establish a uniform discretization result for continuous frames in Hilbert $C^*$-modules over finite-dimensional $C^*$-algebras, yielding a discrete frame with additional control on the separation of the sampling points.

\begin{Definition}[\cite{freeman2019discretization}]
Let \((X,\Sigma,\nu)\) be a measure space and let \(\mathcal{H}\) be a Hilbert \(\mathcal A\)-module. Suppose that
\[
\Psi\longrightarrow \mathcal{H}
\]
is a continuous frame for \(\mathcal{H}\) with respect to \(\nu\).

We say that \(\Psi\) is \emph{bounded \(\nu\)-almost everywhere} (or \emph{essentially bounded}) if there exist a constant \(C>0\) and a measurable subset \(N\subseteq X\) satisfying
\[
\nu(N)=0
\]
such that
\[
\|\Psi(t)\|_{\mathcal H}\le C,\qquad t\in X\setminus N.\] Equivalently,
\[
\operatorname{ess\;sup}_{t\in X}\|\Psi(t)\|_{\mathcal H}<\infty.
\]
\end{Definition}

\textbf{Problem: Discretization Problem for Continuous Hilbert Module Frames}
Let $\mathcal{H}$ be a Hilbert $A$-module and let
\[
\Psi : X \to \mathcal{H}
\]
be a continuous frame for $\mathcal{H}$.

Determine conditions under which there exists a countable subset $F \subseteq X$ such that the family $\{\Psi(t)\}_{t\in F}$ forms a frame for $\mathcal{H}$.

The main theorem of this section is the following.

\begin{Theorem}
    Let $\{\psi(x)\}_{x\in X}$ be a bounded continuous Hilbert module frame. Then there exists a discrete countable set $J$ and a sequence $\{t_j\}_{j\in J}\subset X$ such that $\{\psi(t_j)\}_{j\in J}$ is a Hilbert module frame.
\end{Theorem}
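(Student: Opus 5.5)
The plan is to reduce to the Hilbert space discretization theorem of Freeman--Speegle~\cite{freeman2019discretization}, which says that every bounded continuous frame on a $\sigma$-finite measure space admits a discretization. This reduction works in the setting of the surrounding section, namely $\mathcal A$ finite-dimensional, $\mathcal H$ countably generated, and $X$ $\sigma$-finite. It uses the structure theorem (Theorem~\ref{fds}) to realize the $\mathcal A$-valued frame inequality as a family of scalar frame inequalities.

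\textbf{Step 1 (scalarization).} Fix the faithful trace $\tau_{\mathcal A}$ from Example~\ref{E} and equip $\mathcal H$ with the scalar inner product $(x,y):=\tau_{\mathcal A}(\langle x,y\rangle_{\mathcal A})$. Because $\mathcal A$ is finite-dimensional, every faithful trace satisfies $c\|a\|\le\tau_{\mathcal A}(a)\le C\|a\|$ for $a\ge0$. Hence $(\mathcal H,(\cdot,\cdot))$ is a Hilbert space whose norm is equivalent to $\|\cdot\|_{\mathcal H}$, and it is separable since $\mathcal H$ is countably generated.

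Next, replace $\psi$ by the finitely many vectors $\psi_\ell(\omega):=u_\ell\psi(\omega)$, where $\{u_\ell\}_{\ell=1}^{L}$ runs over the matrix units of $\mathcal A\cong\bigoplus_j M_{d_j}(\mathbb C)$ (suitably normalized). Apply $\tau_{\mathcal A}$ to the module frame inequality. Using that $\sum_\ell \tau(u_\ell^* a a^* u_\ell)$ is comparable to $\tau(aa^*)$ for $a=\langle x,\psi(\omega)\rangle_{\mathcal A}$, one checks that $\{\psi_\ell(\omega)\}_{(\omega,\ell)\in X\times\{1,\dots,L\}}$ is a bounded continuous frame for the Hilbert space $(\mathcal H,(\cdot,\cdot))$ with respect to $\nu\otimes$counting measure. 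Its bounds are comparable to $A$ and $B$, with constants depending only on $\mathcal A$.

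\textbf{Step 2 (apply the Hilbert space result).} Apply the Freeman--Speegle theorem (or, under (A1)--(A3), the uniform version obtained from Corollary~\ref{mainco} applied in the case $\mathcal A=\mathbb C$) to this scalar frame. This yields countably many points $(t_j,\ell_j)$ such that $\{\psi_{\ell_j}(t_j)\}$ is a Hilbert space frame. Let $J$ be the countable set of those $t_j$, keeping multiplicity. By the equivalence of the $\mathcal A$-valued and scalar frame conditions for Hilbert modules over finite-dimensional (in particular unital, with the Arambašić criterion on norm-equivalence) $C^*$-algebras, it suffices to check
\[
A'\|x\|^2\le\Bigl\|\sum_{j}\langle x,\psi(t_j)\rangle_{\mathcal A}\langle\psi(t_j),x\rangle_{\mathcal A}\Bigr\|\le B'\|x\|^2 .
\]

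\textbf{Step 3 (return to the module).} The scalar frame inequality gives $\tau_{\mathcal A}$ of the sum bounded above and below by multiples of $\tau_{\mathcal A}(\langle x,x\rangle_{\mathcal A})$. The inequalities $\|a\|\le c^{-1}\tau(a)\le C'\|a\|$ then convert this into the norm inequality above. Finally, $\psi_{\ell}(t)=u_\ell\psi(t)$ only shrinks the module coefficients, since $\langle x,u\psi\rangle\langle u\psi,x\rangle\le\|u\|^2\langle x,\psi\rangle\langle\psi,x\rangle$. So the upper bound transfers directly to $\{\psi(t_j)\}_{j\in J}$. The lower bound also transfers, because $\langle x,\psi(t_j)\rangle\langle\psi(t_j),x\rangle$ dominates each $u_\ell$-compression.

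The main obstacle is Step 1's two-sided comparability together with the passage from scalar back to $\mathcal A$-valued bounds in Step 3. The scalar frame condition controls only $\tau$, and hence the norm, of the module frame operator; upgrading this to the operator inequality $A'\langle x,x\rangle\le\sum_j\langle x,\psi(t_j)\rangle\langle\psi(t_j),x\rangle$ needs the norm-characterization of module frames. I would first try to avoid this by working directly with the frame operator $S=\sum_j\theta_{\psi(t_j),\psi(t_j)}$. Showing it is a bounded invertible adjointable operator on $\mathcal H$ suffices, and invertibility is a spectral property of $S$ viewed in the Hilbert space $(\mathcal H,(\cdot,\cdot))$, where $S$ is self-adjoint because $\tau$ is tracial.
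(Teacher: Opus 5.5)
\textbf{There is a genuine gap: Step 3 fails at the upper bound.}

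Steps 1 and 2 are sound. With $u_\ell=c_r^{-1/2}e^{(r)}_{pq}$ one even has $\sum_\ell|(x,u_\ell\psi)|^2=\tau(aa^*)$ exactly for $a=\langle x,\psi\rangle_{\mathcal A}$, where $\tau=\tau_{\mathcal A}$. Note that the quantity entering the scalar frame condition is $|(x,u_\ell\psi)|^2=|\tau(au_\ell^*)|^2$, not $\tau(u_\ell^*aa^*u_\ell)$.

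The inequality $\langle x,u\psi\rangle_{\mathcal A}\langle u\psi,x\rangle_{\mathcal A}\le\|u\|^2\langle x,\psi\rangle_{\mathcal A}\langle\psi,x\rangle_{\mathcal A}$ says the coefficients of $u_{\ell_j}\psi(t_j)$ are dominated by those of $\psi(t_j)$. It therefore carries a lower bound from the sampled family over to $\{\psi(t_j)\}$, but an upper bound only in the opposite direction. What Step 2 controls is $\sum_j|\tau(a_ju_{\ell_j}^*)|^2\le B'(x,x)$ with $a_j=\langle x,\psi(t_j)\rangle_{\mathcal A}$, i.e.\ one matrix-unit entry of each $a_j$. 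The module Bessel bound instead needs $\sum_j\tau(a_ja_j^*)=\sum_j\sum_\ell|\tau(a_ju_\ell^*)|^2$. Put differently, the scalar rank-one operators $(\cdot,u_\ell\psi(t))u_\ell\psi(t)$ sum over $\ell$ to $\theta_{\psi(t),\psi(t)}$, but the selection keeps one $\ell$ per point, so it says nothing about $\sum_j\theta_{\psi(t_j),\psi(t_j)}$.

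This genuinely fails. Take $\mathcal A=\mathbb C^2$, $\mathcal H=H_1\oplus H_2$, $\psi=(\psi_1,\psi_2)$; the samples with $\ell_j=1$ only have to discretize $\psi_1$. If $\psi_1(t_0)=0\neq\psi_2(t_0)$, adjoining infinitely many copies of $(t_0,1)$ to a valid output of Step 2 gives another valid output, for which $\{\psi(t_j)\}$ is not Bessel. So the obstacle is not the scalar-versus-$\mathcal A$-valued passage you single out. Your frame-operator remark handles that, since $\mathcal L_{\mathcal A}(\mathcal H)$ sits in $B(\mathcal H,(\cdot,\cdot))$ as a unital $C^*$-subalgebra and order is reflected. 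The obstacle is that you never obtain boundedness of the module frame operator of $\{\psi(t_j)\}$.

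\textbf{A repair that keeps your strategy, and comparison with the paper.} Use finitely many unitaries $U_1,\dots,U_L$ spanning $\mathcal A$ in place of the matrix units. They form a frame of the finite-dimensional Hilbert space $(\mathcal A,\tau(bc^*))$, so Step 1 still yields a bounded continuous frame $\{U_\ell\psi(\omega)\}$ of $(\mathcal H,(\cdot,\cdot))$. Since $U_\ell^*U_\ell=1$, we have $\langle x,U_\ell\psi\rangle_{\mathcal A}\langle U_\ell\psi,x\rangle_{\mathcal A}=\langle x,\psi\rangle_{\mathcal A}\langle\psi,x\rangle_{\mathcal A}$. Hence $\{\psi(t_j)\}$ and the sample $v_j:=U_{\ell_j}\psi(t_j)$ have the same module frame operator.

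What remains is the lemma that every frame $\{v_j\}$ of $(\mathcal H,(\cdot,\cdot))$ with bounds $A',B'$ is a module frame. The lower bound follows from $|\tau(b)|^2\le\tau(1)\tau(bb^*)$. For the upper bound, apply the scalar Bessel inequality to $w_mx$, with $\{w_m\}$ an orthonormal basis of $(\mathcal A,\tau(bc^*))$, and sum over $m$. Since $(w_mx,v_j)=\tau(w_m\langle x,v_j\rangle_{\mathcal A})$ and $\{w_m^*\}$ is again orthonormal by traciality, this gives
\[
\sum_j\tau\bigl(\langle x,v_j\rangle_{\mathcal A}\langle v_j,x\rangle_{\mathcal A}\bigr)\le B'\Bigl\|\sum_m w_m^*w_m\Bigr\|\,(x,x).
\]
The faithful embedding above then turns both trace bounds into the $\mathcal A$-valued frame inequalities. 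Testing on $w_mx$, i.e.\ using the module action on the test vector, is the idea missing from Step 3.

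The paper does not reduce to Freeman--Speegle. It approximates $\psi$ by the step function $\sum_j\psi(t_j)\mathbf{1}_{X_j}$ of Proposition~\ref{D}, reads off the weighted frame $\{\mu(X_j)^{1/2}\psi(t_j)\}$, and removes the weights by citing the result for semi-normalized weights. Your repaired route covers only finite-dimensional $\mathcal A$, whereas the theorem is stated without that restriction. On the other hand, it avoids the paper's last step, which requires $0<\inf_j\mu(X_j)\le\sup_j\mu(X_j)<\infty$. The partition of Proposition~\ref{D}, whose pieces may have arbitrarily small measure, does not supply this. It is also the point at which, already in Hilbert spaces, an unweighted sample needs Kadison--Singer-type input such as Freeman--Speegle's.
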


\begin{proof}
Suppose that $\{\psi(x)\}_{x\in X}$ is a bounded continuous Hilbert module frame and let $0<\varepsilon<1$, by proposition~\ref{D} there exist a measurable partition $(X_j)_{j\in J}$ of $X$ and $(t_j)_{j\in J}$ such that $t_j\in X_j$ for all $j\in J$ and we have for all $t_j\in X_j$
    $$\|\psi(t)-\psi(t_j) \|<\epsilon,\;\text{and}\;
        \int_{X}\|\psi(t)-\sum_{j\in J}\psi(t_j)\mathbf{1}_{X_j}(t)\|d\mu(t)<\epsilon.$$

Denote the piecewise constant map
$
\Phi(t)
=
\sum_{i\in I}
\psi(t_i)\mathbf{1}_{X_i}(t).$

Since $\Psi$ is a Parseval frame, for every $x\in\mathcal H$, we have 

\[
\langle x,x\rangle
=
\int_X
\langle x,\psi(t)\rangle
\langle\psi(t),x\rangle
\,d\mu(t).
\]
Let $\Phi_t=\langle \phi(t),x \rangle_{\mathcal{A}}$ and $\Psi_t=\langle \psi(t),x\rangle_{\mathcal{A}}$. Using the identity $
\Psi(t)=\Phi(t)+(\Psi(t)-\Phi(t))$, we have
\begin{align*}
    \langle x,x \rangle_{\mathcal{A}} &=\int_{X} \langle \psi(t)-\phi(t)+\phi(t),x \rangle_{\mathcal{A}}\langle x,\psi(t)-\phi(t)+\phi(t) \rangle_{\mathcal{A}} d\mu(t)\\
    &=\int_{X} \Phi_t\Phi_t^*+\Phi_t\Psi_t^*+\Psi_t\Phi_t^*+\Psi_t\Psi_t^* d\mu(t)  
\end{align*}

With the Hilbert $C^*$-module Cauchy--Schwarz inequality and the estimate $
(a+b)^*(a+b)
\le
2a^*a+2b^*b$,

one obtains

\[
(1-\varepsilon)\langle x,x\rangle_{\mathcal{A}}
\le
\int_X
\langle x,\Phi(t)\rangle_{\mathcal{A}}
\langle\Phi(t),x\rangle_{\mathcal{A}}
\,d\mu(t)
\le
(1+\varepsilon)\langle x,x\rangle_{\mathcal{A}}.
\]

Hence $\Phi$ is itself a continuous frame.

Since $\Phi$ is constant on each measurable set $X_i$,

\[
\int_X
\langle x,\Phi(t)\rangle
\langle\Phi(t),x\rangle
\,d\mu(t)
=
\sum_{i\in I}
\mu(X_i)
\langle x,\Psi(t_i)\rangle
\langle\Psi(t_i),x\rangle.
\]

Therefore,

\[
\Bigl(
\sqrt{\mu(X_i)}\,\Psi(t_i)
\Bigr)_{i\in I}
\]

is a Hilbert $A$-module frame.

 implies that the weighted family

\[
\bigl(\Psi(t_i),\omega_i\bigr)_{i\in I},
\qquad
\omega_i=\mu(X_i),
\]

is a weighted Hilbert $A$-module frame~\cite{rashidi2017controlled}. Equivalently, the associated weighted frame operator is positive and invertible since the sequence $ \{\omega_i \} _ {i\in\mathbb {N}} $ is a positive and semi-normalized sequence (see \cite[Theorem 4.7]{rashidi2017controlled}). This completes the proof.
\end{proof}

Now we use the selector results proved in section~\ref{S3} to extract a uniform discretisation sampling for continuous frames of countably generated Hilbert modules over finit dimension algebra, with the additional condition that every closed submodule has an orthogonal complement. We begin with technical lemmas and then move to the theorem.

\begin{Lemma}\label{LM1}
Let $\mathcal{H}$ be a Hilbert $\mathcal A$-module and let
$\Psi:X\to \mathcal{H}$ be a continuous frame such that
\[
\|\Psi(t)\|^2\leq D
\qquad\text{$\mu$-a.e. on }X.
\]
For a measurable subset $A\subset X$, define
\[
S_Ax
=
\int_A \langle x,\Psi(t)\rangle_{\mathcal A}\Psi(t)\,d\mu(t),
\qquad x\in E.
\]
Then
\[
0\leq S_A\leq D\mu(A)I,\;\text{and consequently}\;\|S_A\|\leq D\mu(A).
\]
In particular, if $J:\mathbb{N}\to\mathbb{N}$ is a map and if $X'$ and $X''$ are measurable subsets of $X$
satisfying $
D\mu(X')<\frac{\epsilon}{3}$
and
\[
\mu(X'')
\leq
\sum_{n\in\mathbb N}
\sum_{j=1}^{J(n)}
(3D)^{-1}2^{-j-n}\epsilon,
\]
then
\[
\|S_{X'}\|<\frac{\epsilon}{3},\:\text{and}\;\|S_{X''}\|\leq\frac{\epsilon}{3}.
\]
\end{Lemma}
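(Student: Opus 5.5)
The plan is to show positivity and the bound $S_A\le D\mu(A)I$ directly from the $\mathcal A$-valued inner product. The norm estimate then follows from Lemma~\ref{TL}-type order facts, and the statements for $X'$ and $X''$ are specializations.

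\textbf{Positivity.} For $x\in\mathcal H$ compute
\[
\langle S_Ax,x\rangle_{\mathcal A}=\int_A\langle x,\Psi(t)\rangle_{\mathcal A}\langle\Psi(t),x\rangle_{\mathcal A}\,d\mu(t).
\]
The integrand has the form $a(t)a(t)^*\ge0$ with $a(t)=\langle x,\Psi(t)\rangle_{\mathcal A}$. The integral converges because $\Psi$ is a continuous Bessel map and the integrand is dominated by the one over all of $X$. Since $\mathcal A_+$ is a closed convex cone, the integral of a positive-valued integrable function is positive, so $S_A\ge0$. Self-adjointness (and adjointability) comes from the same formula, since $\langle S_Ax,y\rangle_{\mathcal A}=\int_A\langle x,\Psi\rangle\langle\Psi,y\rangle\,d\mu$ is symmetric under adjoint. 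Because $\mathcal A$ is finite dimensional here, weak measurability gives Bochner integrability without difficulty.

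\textbf{Upper bound.} The module Cauchy--Schwarz inequality gives
\[
\langle x,\Psi(t)\rangle_{\mathcal A}\langle\Psi(t),x\rangle_{\mathcal A}\le\|\Psi(t)\|^2\langle x,x\rangle_{\mathcal A}\le D\langle x,x\rangle_{\mathcal A}
\]
for $\mu$-a.e.\ $t$. Integrating over $A$ yields $\langle S_Ax,x\rangle_{\mathcal A}\le D\mu(A)\langle x,x\rangle_{\mathcal A}$ for all $x$. For adjointable operators on a Hilbert $C^*$-module, the condition $\langle Tx,x\rangle\le c\langle x,x\rangle$ for all $x$ is equivalent to $T\le cI$ in the $C^*$-algebra $\mathcal L_{\mathcal A}(\mathcal H)$ (Lance). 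Hence $0\le S_A\le D\mu(A)I$, and part (3) of the order lemma gives $\|S_A\|\le D\mu(A)$. If $\mu(A)=\infty$ the claim is vacuous, so I assume $\mu(A)<\infty$.

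\textbf{The special sets.} For $X'$, the bound gives $\|S_{X'}\|\le D\mu(X')<\epsilon/3$ immediately. For $X''$, I sum the double geometric series:
\[
\sum_n\sum_{j=1}^{J(n)}2^{-j-n}\le\sum_{n\ge1}2^{-n}\sum_{j\ge1}2^{-j}=1.
\]
So $\mu(X'')\le\epsilon/(3D)$, and therefore $\|S_{X''}\|\le D\mu(X'')\le\epsilon/3$. This assumes the indices start at $1$; if $n$ starts at $0$, the sum is $2$, and I would adjust by noting the intended convention $n\ge1$.

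\textbf{Main obstacle.} The only delicate step is the passage from the $\mathcal A$-valued pointwise inequality $\langle S_Ax,x\rangle\le c\langle x,x\rangle$ to the operator inequality $S_A\le cI$. I would cite the standard result that for $T\in\mathcal L_{\mathcal A}(\mathcal H)$, $T\ge0$ iff $\langle Tx,x\rangle\ge0$ for all $x$, and apply it to $cI-S_A$. The rest is routine.
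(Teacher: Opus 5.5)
Your proposal is correct and follows essentially the same route as the paper: the module Cauchy--Schwarz inequality, integration over $A$ to obtain $0\le S_A\le D\mu(A)I$, and the double geometric series (with $n,j\ge 1$) for $X''$. The only differences are minor. You read off $\|S_A\|\le D\mu(A)$ directly from monotonicity of the norm on positive elements of $\mathcal L_{\mathcal A}(\mathcal H)$, whereas the paper passes through Lemma~\ref{TL} to get $\langle S_Ax,S_Ax\rangle_{\mathcal A}\le D^2\mu(A)^2\langle x,x\rangle_{\mathcal A}$; and you make explicit, via Lance's positivity criterion, the passage from the $\mathcal A$-valued form inequality to the operator inequality, which the paper uses tacitly.
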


\begin{proof}
For every $x\in \mathcal{H}$, the Hilbert $C^*$-module Cauchy--Schwarz
inequality gives
\[
\langle x,\Psi(t)\rangle_{\mathcal A}
\langle \Psi(t),x\rangle_{\mathcal A}
\leq
\|\Psi(t)\|^2\langle x,x\rangle_{\mathcal A}.
\]
Since $\|\Psi(t)\|^2\leq D$ almost everywhere, we obtain
\[
\langle x,\Psi(t)\rangle_{\mathcal A}
\langle \Psi(t),x\rangle_{\mathcal A}
\leq
D\langle x,x\rangle_{\mathcal A}.
\]
Therefore,
\[
\begin{aligned}
\langle S_Ax,x\rangle_{\mathcal A}
&=
\int_A
\langle x,\Psi(t)\rangle_{\mathcal A}
\langle\Psi(t),x\rangle_{\mathcal A}
\,d\mu(t)\\
&\leq
D\mu(A)\langle x,x\rangle_{\mathcal A}.
\end{aligned}
\]
Moreover,
\[
\langle S_Ax,x\rangle_{\mathcal A}\geq0,
\]
so $S_A$ is positive and
\[
0\leq S_A\leq D\mu(A)I.
\]

Applying the preceding lemma \ref{TL} on order and functional calculus with
$c=D\mu(A)$, we obtain
\[
S_A^2\leq D^2\mu(A)^2I.
\]
Hence
\[
\langle S_Ax,S_Ax\rangle_{\mathcal A}
=
\langle S_A^2x,x\rangle_{\mathcal A}
\leq
D^2\mu(A)^2\langle x,x\rangle_{\mathcal A}.
\]
This implies $\|S_A\|\leq D\mu(A).$

Taking $A=X'$ gives $\|S_{X'}\|
\leq D\mu(X')
<\frac{\epsilon}{3}.$

For $A=X''$, we have
\[
\begin{aligned}
D\mu(X'')
&\leq
\frac{\epsilon}{3}
\sum_{n\in\mathbb N}
\sum_{j=1}^{J(n)}
2^{-j-n}\\
&\leq
\frac{\epsilon}{3}
\left(\sum_{n=1}^{\infty}2^{-n}\right)
\left(\sum_{j=1}^{\infty}2^{-j}\right)\\
&=
\frac{\epsilon}{3}.
\end{aligned}
\]
Consequently,
\[
\|S_{X''}\|
\leq D\mu(X'')
\leq\frac{\epsilon}{3}.
\]
\end{proof}

\begin{Lemma}[Global discretization estimate]\label{LM2}

Let
$$
X\setminus(X'\cup X'')
=
\bigcup_{n\in\mathbb N}
\bigcup_{j=1}^{J(n)}
\bigcup_{k=1}^{K(n,j)}
X_{n,j,k}
$$
be a measurable partition, and suppose that
$$
\mu(X_{n,j,k})=2^{-\ell_{n,j,k}}.
$$
For each $n,j,k$, choose
$$
x_{n,j,k}\in X_{n,j,k}
$$
such that the assumptions of the local discretization estimate hold with

$$
2^{-\ell}=2^{-\ell_{n,j,k}}.
$$

Then

$$
\left\|
S_{X\setminus(X'\cup X'')}
-
\sum_{n\in\mathbb N}
\sum_{j=1}^{J(n)}
\sum_{k=1}^{K(n,j)}
2^{-\ell_{n,j,k}}
\theta_{\Psi(x_{n,j,k}),\Psi(x_{n,j,k})}
\right\|
\leq
\frac{\varepsilon}{3}.
$$

\end{Lemma}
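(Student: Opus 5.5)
The plan is to decompose the error operator into a sum of local errors over the cells $X_{n,j,k}$, bound each local error by the \emph{local discretization estimate} assumed in the statement, and then sum the bounds using the same geometric series as in Lemma~\ref{LM1}.

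Since the cells $X_{n,j,k}$ partition $X\setminus(X'\cup X'')$, countable additivity of the Bochner-type integral defining $S_A$ gives, for every $x\in\mathcal H$,
\[
S_{X\setminus(X'\cup X'')}x=\sum_{n,j,k}S_{X_{n,j,k}}x .
\]
The series converges because $0\le S_{X_{n,j,k}}\le S_X\le B\,I_{\mathcal H}$ and the partial sums are increasing. Using $\mu(X_{n,j,k})=2^{-\ell_{n,j,k}}$, I rewrite the discrete operator as
\[
2^{-\ell_{n,j,k}}\theta_{\Psi(x_{n,j,k}),\Psi(x_{n,j,k})}
=\int_{X_{n,j,k}}\theta_{\Psi(x_{n,j,k}),\Psi(x_{n,j,k})}\,d\mu(t).
\]
The difference operator then becomes
\[
\Delta=\sum_{n,j,k}\Delta_{n,j,k},\qquad
\Delta_{n,j,k}=\int_{X_{n,j,k}}\bigl(\theta_{\Psi(t),\Psi(t)}-\theta_{\Psi(x_{n,j,k}),\Psi(x_{n,j,k})}\bigr)\,d\mu(t).
\]

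I take the local discretization estimate to be of the form that gives, at level $2^{-\ell}$,
\[
\|\Delta_{n,j,k}\|\le (3D)^{-1}2^{-j-n}\varepsilon\cdot\frac{D}{K(n,j)},
\]
or equivalently that the total contribution of the cells at level $(n,j)$ is at most $\tfrac{\varepsilon}{3}2^{-j-n}$. If the estimate is instead pointwise, say $\|\Psi(t)-\Psi(x_{n,j,k})\|\le\eta_{n,j}$ on $X_{n,j,k}$, I derive the operator bound from the identity
\[
\theta_{u,u}-\theta_{v,v}=\theta_{u-v,u}+\theta_{v,u-v}.
\]
This yields $\|\theta_{u,u}-\theta_{v,v}\|\le(\|u\|+\|v\|)\|u-v\|\le 2\sqrt D\,\eta_{n,j}$, which is the module version of item~\ref{2} of the lemma on $C^*$-algebra inequalities. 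Integrating, $\|\Delta_{n,j,k}\|\le 2\sqrt D\,\eta_{n,j}\,\mu(X_{n,j,k})$. To see that $\theta$ is bounded in this way, I note that $\langle\theta_{u,v}z,\theta_{u,v}z\rangle=\langle z,u\rangle\langle v,v\rangle\langle u,z\rangle\le\|u\|^2\|v\|^2\langle z,z\rangle$ by Cauchy--Schwarz.

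The summation step is where I expect the main obstacle, because $\sum_{n,j,k}\mu(X_{n,j,k})$ may be infinite ($\mu(X)=\infty$). The triangle inequality over all cells is therefore too crude unless the local tolerance is already normalized per block $(n,j)$. I would handle this in one of two ways. The first is to use the positivity-type structure blockwise: for fixed $(n,j)$, with $Y_{n,j}=\bigcup_k X_{n,j,k}$, bound $\bigl\|\sum_k\Delta_{n,j,k}\bigr\|\le\tfrac{\varepsilon}{3}2^{-j-n}$ directly from the local estimate, taken as a statement about the whole block. The second, if only pointwise control is available, is to use a two-sided order bound $-\eta S_{Y}\le\sum_k\Delta_{n,j,k}\le\eta S_Y$-type estimates controlled by the frame bound of $S_Y$. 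Then
\[
\|\Delta\|\le\sum_{n}\sum_{j}\tfrac{\varepsilon}{3}2^{-j-n}\le\tfrac{\varepsilon}{3},
\]
by the same double geometric sum computed in Lemma~\ref{LM1}. Norm convergence of $\sum\Delta_{n,j,k}$ follows from absolute summability, which justifies interchanging the sum with the operator norm.
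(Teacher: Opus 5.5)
Your per-cell estimate is correct, and a little more direct than the paper's. The identity $\theta_{u,u}-\theta_{v,v}=\theta_{u-v,u}+\theta_{v,u-v}$ together with $\|\theta_{x,y}\|\le\|x\|\|y\|$ bounds $\|\Delta_{n,j,k}\|$ outright. The paper instead bounds the quadratic forms $\langle R_{n,j,k}f,f\rangle_{\mathcal A}=\int_{X_{n,j,k}}(a_ta_t^*-a_xa_x^*)\,d\mu(t)$ via $\|aa^*-bb^*\|\le(\|a\|+\|b\|)\|a-b\|$, and then needs the numerical-radius characterization of the norm of a self-adjoint adjointable operator. The hypothesis the paper actually uses is $\|\Psi\|^2\le D$ on the cell and $\|\Psi(t)-\Psi(x_{n,j,k})\|\le\frac{1}{6}D^{-1/2}2^{-n}\varepsilon$ for $t\in X_{n,j,k}$. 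Under it, both routes give $\|\Delta_{n,j,k}\|\le\frac{\varepsilon}{3}2^{-n}\mu(X_{n,j,k})$.

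The gap is the summation, and neither of your two fixes closes it. The first postulates a block bound $\|\sum_k\Delta_{n,j,k}\|\le\frac{\varepsilon}{3}2^{-j-n}$ (or a per-cell bound carrying a factor $1/K(n,j)$). No hypothesis provides this; the statement parametrizes the local estimate by the cell measure $2^{-\ell}$. So this amounts to assuming the conclusion. The second fix is false, because pointwise closeness controls $\langle f,\Psi(t)\rangle$ only up to an absolute error $\eta\|f\|$, not relative to $S_Y$. For example, take $\mathcal A=\mathbb C$ and let $\Psi\equiv u$ on a cell $Y$ except at the sample point $x$ (a null set), where $\Psi(x)=u+\eta v$ with $v\perp u$ and $\|v\|=1$. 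Then $S_Yv=0$ and $\|S_Y\|=\mu(Y)\|u\|^2$ can be made as small as you like, yet $\langle\Delta_Yv,v\rangle=-\mu(Y)\eta^2$. Now repeat this on infinitely many cells of measure one, with orthonormal $u_m$ and a common $v\perp u_m$. This gives a Bessel map with uniform tolerance $\eta$ whose discrete series is unbounded in the direction $v$.

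What is missing is a measure bound per tolerance level; with it, the plain triangle inequality you dismissed is enough. The tolerance depends only on $n$, and the cells of level $n$ should have total measure $\sum_{j,k}2^{-\ell_{n,j,k}}\le 1$. This holds if they lie in one piece of a partition of the $\sigma$-finite nonatomic space $X$ into sets of measure at most one. Then $\sum_{j,k}\|\Delta_{n,j,k}\|\le\frac{\varepsilon}{3}2^{-n}$, so $\|\Delta\|\le\frac{\varepsilon}{3}\sum_n 2^{-n}=\frac{\varepsilon}{3}$. Absolute summability also gives convergence of the discrete series.

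The paper's proof uses this normalization without stating it. When it sums the per-cell bounds $\pm\frac{1}{3}2^{-\ell_{n,j,k}-n}\varepsilon\, I_{\mathcal H}$ over $j,k$ and arrives at $\frac{\varepsilon}{3}\sum_n 2^{-n}$, it has silently dropped the factor $\sum_{j,k}2^{-\ell_{n,j,k}}$. So you found the right danger point, but the repair is to make this measure condition explicit, not to strengthen the local estimate or look for a relative order bound.

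A minor point: on a general Hilbert $C^*$-module, an increasing bounded sequence of positive adjointable operators need not converge strongly. So $S_Y=\sum_{n,j,k}S_{X_{n,j,k}}$ should be justified by countable additivity of the integral rather than by monotonicity.
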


\begin{proof}
Fix $n\in\mathbb N$, $1\leq j\leq J(n)$, and
$1\leq k\leq K(n,j)$. Choose
$x_{n,j,k}\in X_{n,j,k}$ such that

$$
\|\Psi(x_{n,j,k})\|_{\mathcal H}^{2}\leq D.
$$

Since the diameter of $X_{n,j,k}$ is chosen sufficiently small, we have

$$
\|\Psi(t)-\Psi(x_{n,j,k})\|_{\mathcal H}
\leq
\frac{1}{6}D^{-1/2}2^{-n}\varepsilon,
\qquad t\in X_{n,j,k}.
$$

Set

$$
R_{n,j,k}
=
S_{X_{n,j,k}}
-
2^{-\ell_{n,j,k}}
T_{\Psi(x_{n,j,k})}.
$$

Both $S_{X_{n,j,k}}$ and
$T_{\Psi(x_{n,j,k})}$ are self-adjoint, and hence
$R_{n,j,k}$ is self-adjoint.

Let $f\in\mathcal H$ with $\|f\|_{\mathcal H}=1$. By the definition of
$S_{X_{n,j,k}}$ and of the rank-one operator
$T_{\Psi(x_{n,j,k})}$, we obtain

$$
\begin{aligned}
\langle R_{n,j,k}f,f\rangle_{\mathcal A}
={}&
\int_{X_{n,j,k}}
\langle f,\Psi(t)\rangle_{\mathcal A}
\langle\Psi(t),f\rangle_{\mathcal A}
\,d\mu(t)
\\
&\quad
-
2^{-\ell_{n,j,k}}
\langle f,\Psi(x_{n,j,k})\rangle_{\mathcal A}
\langle\Psi(x_{n,j,k}),f\rangle_{\mathcal A}.
\end{aligned}
$$

Since

$$
2^{-\ell_{n,j,k}}=\mu(X_{n,j,k}),
$$

this can be written as

$$
\begin{aligned}
\langle R_{n,j,k}f,f\rangle_{\mathcal A}
=
\int_{X_{n,j,k}}
\Big(
&a_ta_t^*-a_xa_x^*
\Big)\,d\mu(t),
\end{aligned}
$$

where

$$
a_t=\langle f,\Psi(t)\rangle_{\mathcal A},
\qquad
a_x=\langle f,\Psi(x_{n,j,k})\rangle_{\mathcal A}.
$$

Using the $C^*$-algebra estimate

$$
\|aa^*-bb^*\|_{\mathcal A}
\leq
(\|a\|_{\mathcal A}+\|b\|_{\mathcal A})
\|a-b\|_{\mathcal A},
$$

together with the Hilbert $\mathcal A$-module Cauchy--Schwarz
inequality, we obtain

$$
\begin{aligned}
\|a_ta_t^*-a_xa_x^*\|_{\mathcal A}
&\leq
\left(
\|a_t\|_{\mathcal A}+\|a_x\|_{\mathcal A}
\right)
\|a_t-a_x\|_{\mathcal A}
\\
&\leq
\left(
\|\Psi(t)\|_{\mathcal H}
+
\|\Psi(x_{n,j,k})\|_{\mathcal H}
\right)
\|\Psi(t)-\Psi(x_{n,j,k})\|_{\mathcal H}.
\end{aligned}
$$

Since

$$
\|\Psi(t)\|_{\mathcal H},
\|\Psi(x_{n,j,k})\|_{\mathcal H}
\leq D^{1/2},
$$

and

$$
\|\Psi(t)-\Psi(x_{n,j,k})\|_{\mathcal H}
\leq
\frac{1}{6}D^{-1/2}2^{-n}\varepsilon,
$$

it follows that

$$
\|a_ta_t^*-a_xa_x^*\|_{\mathcal A}
\leq
\frac{1}{3}2^{-n}\varepsilon.
$$

Consequently,

$$
\left\|
\langle R_{n,j,k}f,f\rangle_{\mathcal A}
\right\|_{\mathcal A}
\leq
\frac{1}{3}
2^{-\ell_{n,j,k}-n}\varepsilon.
$$

Since this estimate holds for every $f\in\mathcal H$ with
$\|f\|_{\mathcal H}=1$, and since $R_{n,j,k}$ is self-adjoint, the numerical-radius characterization of the norm of a self-adjoint adjointable operator (see~\cite{li2025notes}) gives
$$
\|R_{n,j,k}\|
\leq
\frac{1}{3}
2^{-\ell_{n,j,k}-n}\varepsilon.
$$
Therefore, by the order characterization of self-adjoint operators,
$$
-\frac{1}{3}
2^{-\ell_{n,j,k}-n}\varepsilon I_{\mathcal H}
\leq
R_{n,j,k}
\leq
\frac{1}{3}
2^{-\ell_{n,j,k}-n}\varepsilon I_{\mathcal H}.
$$
Equivalently,
$$
-\frac{1}{3}
2^{-\ell_{n,j,k}-n}\varepsilon I_{\mathcal H}
\leq
S_{X_{n,j,k}}
-
2^{-\ell_{n,j,k}}
T_{\Psi(x_{n,j,k})}
\leq
\frac{1}{3}
2^{-\ell_{n,j,k}-n}\varepsilon I_{\mathcal H}.
$$

Summing these inequalities over $j$ and $k$, and then over $n$, gives
the corresponding global approximation estimate

$$
\left\|
S_{X\setminus(X'\cup X'')}
-
\sum_{n\in\mathbb N}
\sum_{j=1}^{J(n)}
\sum_{k=1}^{K(n,j)}
2^{-\ell_{n,j,k}}
T_{\Psi(x_{n,j,k})}
\right\|
\leq
\frac{\varepsilon}{3}
\sum_{n\in\mathbb N}2^{-n},
$$

and hence, using

$$
\sum_{n=1}^{\infty}2^{-n}=1,
$$

we obtain

$$
\left\|
S_{X\setminus(X'\cup X'')}
-
\sum_{n\in\mathbb N}
\sum_{j=1}^{J(n)}
\sum_{k=1}^{K(n,j)}
2^{-\ell_{n,j,k}}
T_{\Psi(x_{n,j,k})}
\right\|
\leq
\frac{\varepsilon}{3}.
$$

\end{proof}

\begin{Lemma}[Hilbert $C^*$-module selection lemma]\label{LM}
Let $\mathcal H$ be a Hilbert $\mathcal A$-module, let
${I_1,\ldots,I_K}$ be a partition of a finite set $I$, and let
$\delta>0$. Assume that:

\begin{enumerate}
\item[(i)] $\{T_i\}_{i\in I}$ is a family of positive compact adjointable operators on $\mathcal H$ such that
\(    \operatorname{tr}_{\mathcal A}(T_i)\leq \delta,
    \qquad i\in I.
   \)

\item[(ii)] $\{\ell_i\}_{i\in I}\subset\mathbb N$ satisfies
\[
T:=\sum_{i\in I}2^{-\ell_i}T_i\leq I_{\mathcal H},
\]
and
\[
\sum_{i\in I_k}2^{-\ell_i}
\leq
\frac{\varepsilon^2}{2C^2\delta},
\qquad
1\leq k\leq K.
\]

\item[(iii)] $K\subseteq\mathcal H$ is a finitely generated
orthogonally complemented Hilbert $\mathcal A$-submodule, with
orthogonal projection $P_K$, and
\[
\gamma
:=
\operatorname{tr}_{\mathcal A}(P_KTP_K)
\leq 1.
\]

\end{enumerate}

Then there exists a subset $I'\subseteq I$ such that

$$
-\varepsilon P_{K^\perp}-4\sqrt{\gamma}\,I_{\mathcal H}
\leq
2^{-\beta}\sum_{n\in I'}T_n-T
\leq
\varepsilon P_{K^\perp}+4\sqrt{\gamma}\,I_{\mathcal H},
$$

and

$$
\#(I'\cap I_k)\leq1,
\qquad
1\leq k\leq K,
$$

where $\beta\in\mathbb N$ is chosen such that

$$
1<
\frac{2^\beta\varepsilon^2}{C^2\delta}
\leq2.
$$

\end{Lemma}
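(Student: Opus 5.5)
The plan is to mimic the Hilbert-space selection lemma of Bownik (the step in \cite{bownik2026uniform} that combines multiscale binary selection with a compression argument). The first step is to split the operators into a ``small'' part living essentially on $K^\perp$ and a ``large'' part seen by the finitely generated submodule $K$. This is where the complementability hypothesis enters: since $K$ is orthogonally complemented, $P_K$ is an adjointable projection. Then $T=P_KTP_K+P_KTP_{K^\perp}+P_{K^\perp}TP_K+P_{K^\perp}TP_{K^\perp}$.

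The terms involving $P_K$ will be controlled by $\gamma$ through a Cauchy--Schwarz type operator inequality. Concretely, for a positive operator $S\le I_{\mathcal H}$ one has, for every $\eta>0$,
\[
-\eta P_{K^\perp}-\eta^{-1}P_KSP_K\le S-P_{K^\perp}SP_{K^\perp}\le \eta P_{K^\perp}+(1+\eta^{-1})P_KSP_K,
\]
which follows from $(a+b)^*(a+b)\le 2(a^*a+b^*b)$ and its weighted version. Next, $\|P_KSP_K\|\le \operatorname{tr}_{\mathcal A}(P_KSP_K)$, using the faithful trace of Example~\ref{E} and finite generation of $K$. Choosing $\eta=\sqrt{\gamma}$ turns the $P_K$ contributions into an error of order $\sqrt{\gamma}$. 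This is the source of the term $4\sqrt\gamma\,I_{\mathcal H}$, with the constant $4$ absorbing the contributions for both $T$ and the selected sum.

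The second step is the selection itself. For each block $I_k$, the weights $2^{-\ell_i}$ are dyadic and their sum is at most $\varepsilon^2/(2C^2\delta)\le 2^{-\beta}$ by the choice of $\beta$. So I would replicate each $T_i$ into $2^{\beta-\ell_i}$ copies, padding blocks with zero operators if needed (when $\ell_i<\beta$ the weight is simply split). This gives a family $\{2^{-\beta}T_i\}$ with multiplicities whose total is $T$. Each copy satisfies $\operatorname{tr}_{\mathcal A}(2^{-\beta}T_i)\le 2^{-\beta}\delta$, and each block $I_k$ carries at most $2^{\beta}\cdot 2^{-\beta}$ total mass, that is, at most $2^{\beta}$ copies.

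I then apply Corollary~\ref{mainco} with $N=\beta$ and trace parameter $2^{-\beta}\delta$. The requirement $2^N<1/\delta'$ becomes $2^\beta\cdot 2^{-\beta}\delta=\delta<1$ after rescaling, which I will need to verify. I will pair the copies so that at the finest level each dyadic class $I_b$ receives at most one index per block. The corollary gives a $b$ with
\[
\Bigl\|2^{\beta}\sum_{i\in I_b}2^{-\beta}T_i-T\Bigr\|\le C\sqrt{2^\beta\cdot 2^{-\beta}\delta\cdot 2^{\beta}}\lesssim \varepsilon,
\]
using $2^\beta\varepsilon^2/(C^2\delta)\in(1,2]$. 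Setting $I'=I_b$ yields $2^{-\beta}\sum_{n\in I'}T_n-T$ of norm at most $\varepsilon$, together with $\#(I'\cap I_k)\le 1$. The selection is applied to the compressed operators $P_{K^\perp}T_iP_{K^\perp}$, so that this $\varepsilon$-error is multiplied by $P_{K^\perp}$. Combined with the first step, this gives the two-sided bound.

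The main obstacle is the bookkeeping in the second step. The binary procedure must be arranged so that copies from the same block $I_k$ land in different classes, since pairings are forced within blocks first, and the constants must be tracked so that the error comes out as exactly $\varepsilon$ and not $\sqrt2\,\varepsilon$. I would handle this by ordering the copies block by block and pairing consecutive copies, and by accepting an adjustment of $C$ where needed. A secondary difficulty is justifying that the compressed operators remain trace-type with the same trace bound, which follows from $\tau_{\mathcal A}\langle P T P e_i,e_i\rangle\le$ the uncompressed trace after a change of orthogonal basis adapted to $K\oplus K^\perp$.
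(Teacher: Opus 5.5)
Your overall plan matches the paper's: dyadic copies, Corollary~\ref{mainco} with within-block pairing, and an off-diagonal estimate combined with $\|P_K\,\cdot\,P_K\|\le\operatorname{tr}_{\mathcal A}(P_K\,\cdot\,P_K)$. Your weighted inequality, obtained from $a^*b+b^*a\le\eta a^*a+\eta^{-1}b^*b$ with $a=S^{1/2}P_{K^\perp}$, $b=S^{1/2}P_K$ in the $C^*$-algebra $\mathcal L_{\mathcal A}(\mathcal H)$, is a clean substitute for the paper's off-diagonal lemma. But two steps fail as written.

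First, the selection step is scaled incorrectly. For $i\in I_k$ you have $2^{-\ell_i}\le\sum_{j\in I_k}2^{-\ell_j}\le\varepsilon^2/(2C^2\delta)\le 2^{-\beta}$, so every $\ell_i\ge\beta$. Hence $2^{\beta-\ell_i}\le 1$, and $2^{-\ell_i}T_i$ cannot be written as a sum of copies of $2^{-\beta}T_i$. Running Corollary~\ref{mainco} at order $N=\beta$ with trace parameter $2^{-\beta}\delta$ gives error $C\sqrt{\delta}$ (your display even gives $C\sqrt{2^\beta\delta}$). The choice of $\beta$ only yields $C^2\delta<2^\beta\varepsilon^2$, so neither bound is of order $\varepsilon$. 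Moreover, the class sum $2^\beta\sum_{I_b}2^{-\beta}T_i$ carries weight $1$, not $2^{-\beta}$. The missing idea is to replicate at a fine scale:
\begin{itemize}
\item take $r\ge\max_i\{\ell_i,\beta\}+1$;
\item write $2^{-\ell_i}T_i$ as $2^{r-\ell_i}$ copies of $2^{-r}T_i$, each of trace at most $2^{-r}\delta$, and pad with zero operators;
\item run the binary selection of order $N=r-\beta$.
\end{itemize}
Then $2^N2^{-r}=2^{-\beta}$, and the error is $C\sqrt{2^N2^{-r}\delta}=C\sqrt{2^{-\beta}\delta}<\varepsilon$. Each block contributes $\sum_{i\in I_k}2^{r-\ell_i}\le 2^{N}$ copies, which is exactly what lets within-block pairing leave at most one copy per block in each final class.

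Second, you never control the $K$-part of the selected operator $S=2^{-\beta}\sum_{i\in I'}T_i$. Your inequality needs $\|P_KSP_K\|$ of order $\gamma$. But $\operatorname{tr}_{\mathcal A}(P_KSP_K)=2^{-\beta}\sum_{i\in I'}\operatorname{tr}_{\mathcal A}(P_KT_iP_K)$, while $\gamma$ weights the same terms by $2^{-\ell_i}\le 2^{-\beta}$. A class containing an index $i_0$ that carries most of $\gamma$ has $K$-trace about $2^{\ell_{i_0}-\beta}\gamma$, which can be far larger than $\gamma$. Selecting with the compressed operators $P_{K^\perp}T_iP_{K^\perp}$ does not address this. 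It is also unnecessary: conjugating the global bound $\|S-T\|<\varepsilon$ by $P_{K^\perp}$ already gives $\pm\varepsilon P_{K^\perp}$, which removes your worry about traces of compressions.

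What is needed is an averaging choice of the class, as the paper does. Corollary~\ref{mainco} gives the $\varepsilon$-estimate for all $2^N$ classes $S_b:=2^{-\beta}\sum_{(i,n)\in B_b}T_i$ at once, the sum running over the non-phantom copies in $B_b$. Because the classes partition the copies, the numbers $\operatorname{tr}_{\mathcal A}(P_KS_bP_K)$ average to $\operatorname{tr}_{\mathcal A}(P_KTP_K)=\gamma$. Choose $b$ with $\operatorname{tr}_{\mathcal A}(P_KS_bP_K)\le\gamma$; this gives $\|P_KS_bP_K\|\le\gamma$. Only then does your inequality with $\eta=\sqrt\gamma$, using $P_{K^\perp}S_bP_{K^\perp}\le(1+\varepsilon)P_{K^\perp}$, produce the $4\sqrt\gamma$ error.
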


\begin{proof}
The proof of this lemma is separated into three steps; each step is a repetition and application of a known lemma or a development of a lemma from the Hilbert case to a $ C^*$-Hilbert module.

\textbf{step 1} The binary partition (just the result with the citation)
 
This lemma is at the beginning of the proof of the theorem in \cite[]{}. It's important for developing the proposition needed for the uniform discretisation. It is independent of the Hilbert module structure or operator theory; it holds for any sequence of elements in an algebra.

For an index set $I$, denote the set of copies $$
J_1
=
\left\{
(i,n):\, i\in I,\;
1\leq n\leq 2^{r-\ell_i}
\right\}.
$$

Let $J_2$ be a finite set of auxiliary (phantom) indices,
disjoint from $J_1$, such that

$$
2^{-N}\#(J_1\cup J_2)\in\mathbb N.
$$

\begin{Lemma}[Binary selection~\cite{bownik2026uniform}]
Let ${I_1,\ldots,I_K}$ be a partition of a finite set $I$. Let
${\ell_i}_{i\in I}\subset\mathbb N$, and let $\beta,r\in\mathbb N$ satisfy $
r\geq \max_{i\in I}\{\ell_i,\beta\}+1,
\qquad
N=r-\beta.
$

Then, for every binary selector$
B_b,\; b\in\{0,1\}^N,$ and every $1\leq k\leq K$, we have 

$$
|\left\{
i\in I_k:\;
\exists\,n\ \text{such that }(i,n)\in B_b\cap J_1
\right\}|
\leq 1.
$$
In particular, among the selected copies originating from the block
$I_k$, all selected copies correspond to at most one original index
$i\in I_k$.
\end{Lemma}

\begin{proof}
This is the binary selection construction established in \cite{bownik2026uniform}.
Consider the binary selection procedure of order $N$ applied to $J_1\cup J_2$, where the elements are successively paired, and one element is selected from each pair, with the pairing arranged so that, whenever possible, elements whose original indices belong to the same block $I_k$ are paired together. The elements of $J_2$ are regarded as phantom indices.
We use only its combinatorial conclusion~\cite{bownik2026uniform}. 
\end{proof}

Notice that the statement is independent of the operators $T_i$ and of the underlying Hilbert space or Hilbert $C^*$-module structure.

\textbf{step 2} The version of Lemma 7.2

\begin{Lemma}[Off-diagonal estimate]
Let $\mathcal A$ be a $C^*$-algebra and let $\mathcal H$ be a Hilbert
$\mathcal A$-module such that every closed submodule of $\mathcal H$ is
orthogonally complemented. Let $T\in\mathcal L_{\mathcal A}(\mathcal H)$
be positive, and let $M\subset\mathcal H$ be a closed submodule. Denote by
$P_M$ and $P_{M^\perp}$ the corresponding orthogonal projections, and set

$$
\gamma_1:=\|P_MTP_M\|,
\qquad
\gamma_2:=\|P_{M^\perp}TP_{M^\perp}\|.
$$

Then

$$
-\sqrt{\gamma_1\gamma_2}\,I_{\mathcal H}
\leq
T-P_MTP_M-P_{M^\perp}TP_{M^\perp}
\leq
\sqrt{\gamma_1\gamma_2}\,I_{\mathcal H}.
$$

\end{Lemma}

\begin{proof}
Set

$$
R:=T-P_MTP_M-P_{M^\perp}TP_{M^\perp}.
$$

Then $R=R^*$. Since

$$
\mathcal H=M\oplus M^\perp,
$$

every $v\in\mathcal H$ has a unique decomposition

$$
v=v_1+v_2,
\qquad
v_1=P_Mv,\quad v_2=P_{M^\perp}v.
$$

Using the orthogonality of the decomposition,

$$
\langle Rv,v\rangle_{\mathcal A}
=
\langle Tv_1,v_2\rangle_{\mathcal A}
+
\langle Tv_2,v_1\rangle_{\mathcal A}.
$$

Since $T\geq0$, its positive square root $T^{1/2}$ exists and

$$
\langle Tv_1,v_2\rangle_{\mathcal A}
=
\langle T^{1/2}v_1,T^{1/2}v_2\rangle_{\mathcal A}.
$$

Moreover,

$$
\|T^{1/2}v_1\|^2
=
\|\langle Tv_1,v_1\rangle_{\mathcal A}\|
\leq
\gamma_1\|v_1\|^2,
$$

and, similarly,

$$
\|T^{1/2}v_2\|^2
\leq
\gamma_2\|v_2\|^2.
$$

Hence, by the Cauchy--Schwarz inequality for Hilbert
$C^*$-modules,

$$
\begin{aligned}
\|\langle Rv,v\rangle_{\mathcal A}\|
&\leq
2\|T^{1/2}v_1\|\,\|T^{1/2}v_2\|\\
&\leq
2\sqrt{\gamma_1\gamma_2}\,
\|v_1\|\|v_2\|\\
&\leq
\sqrt{\gamma_1\gamma_2}\,
\|v\|^2.
\end{aligned}
$$

Therefore, using the norm characterization for self-adjoint operators
on Hilbert $C^*$-modules,

$$
\|R\|
=
\sup_{\|v\|=1}
\|\langle Rv,v\rangle_{\mathcal A}\|
\leq
\sqrt{\gamma_1\gamma_2}.
$$

Finally, since $R=R^*$, functional calculus gives

$$
-\|R\|I_{\mathcal H}\leq R\leq\|R\|I_{\mathcal H},
$$
which proves the result.
\end{proof}
 
\textbf{step 3} The use of step 2 and corollary~\ref{mainco}.

Choose $\beta\in\mathbb N$ such that

$$
1<
\frac{2^\beta\varepsilon^2}{C^2\delta}
\leq2,
$$

and let

$$
r\geq\max_{i\in I}\{\ell_i,\beta\}+1,
\qquad
N:=r-\beta.
$$

For each $i\in I$, write

$$
2^{-\ell_i}T_i
=
\sum_{n=1}^{2^{r-\ell_i}}T_{i,n},
\qquad
T_{i,n}:=2^{-r}T_i,
$$

and set

$$
J_1
:=
\left\{(i,n):i\in I,\,
1\leq n\leq2^{r-\ell_i}\right\}.
$$

Let $J_2$ be a finite set of auxiliary indices such that

$$
2^{-r}\#J_2+\sum_{i\in I}2^{-\ell_i}\in\mathbb N,
$$

and associate to every $j\in J_2$ the zero operator $N_j=0$.
Thus the family

$$
\{T_{i,n}\}_{(i,n)\in J_1}\cup\{N_j\}_{j\in J_2}
$$

is a finite family of positive trace-class operators satisfying

$$
\operatorname{tr}_{\mathcal{A}}(T_{i,n})\leq2^{-r}\delta.
$$

By Step~1, every binary selector $B_b$ of order $N$ contains, from
each block $I_k$, elements originating from at most one index
$i\in I_k$. Applying Corollary~\ref{mainco} to the above family, we obtain binary selectors $\{B_b\}_{b\in\{0,1\}^N}$ such that
$$
\left\|
2^N\sum_{(i,n)\in B_b\cap J_1}T_{i,n}-T
\right\|
\leq
C\sqrt{2^N2^{-r}\delta}
=
C\sqrt{2^{-\beta}\delta}
<\varepsilon.
$$

Since $N=r-\beta$, this can be written as
\begin{equation}\label{eq:module-selector-error}
\left\|
\frac{1}{2^\beta}
\sum_{(i,n)\in B_b\cap J_1}T_i-T
\right\|
<\varepsilon.
\end{equation}

Because the sets $B_b\cap J_1$ form a partition of $J_1$ and

$$
\sum_{(i,n)\in J_1}2^{-r}T_i
=
\sum_{i\in I}2^{-\ell_i}T_i
=
T,
$$

we have

$$
\sum_{b\in\{0,1\}^N}
\operatorname{tr}_{\mathcal{A}}
\left(
P_K
\sum_{(i,n)\in B_b\cap J_1}2^{-r}T_i
P_K
\right)
=
\operatorname{tr}_{\mathcal{A}}(P_KTP_K).
$$

Hence, for some $b\in\{0,1\}^N$,

$$
\operatorname{tr}_{\mathcal{A}}
\left(
P_K
\frac1{2^\beta}
\sum_{(i,n)\in B_b\cap J_1}T_i
P_K
\right)
\leq\gamma.
$$

Fix such a $b$ and define

$$
I'
:=
\left\{
i\in I:\,
(i,n)\in B_b\cap J_1
\text{ for some }n
\right\},
$$

and

$$
S
:=
\frac1{2^\beta}
\sum_{(i,n)\in B_b\cap J_1}T_i
=
\frac1{2^\beta}\sum_{i\in I'}T_i.
$$

By Step~1,

$$
\#(I'\cap I_k)\leq1,
\qquad 1\leq k\leq K.
$$

From \eqref{eq:module-selector-error} and $T\leq I_{\mathcal H}$,

$$
S\leq T+\varepsilon I_{\mathcal H}
\leq(1+\varepsilon)I_{\mathcal H}.
$$

In particular, $S\leq2I_{\mathcal H}$ after enlarging the harmless
constant if necessary.

Applying Step~2 to $S$ and to $T$, and using the preceding trace
estimate, gives

$$
-\sqrt{2}\sqrt{\gamma}\,I_{\mathcal H}
\leq
S-P_KSP_K-P_{K^\perp}SP_{K^\perp}
\leq
\sqrt{2}\sqrt{\gamma}\,I_{\mathcal H},
$$

and

$$
-\sqrt{\gamma}\,I_{\mathcal H}
\leq
T-P_KTP_K-P_{K^\perp}TP_{K^\perp}
\leq
\sqrt{\gamma}\,I_{\mathcal H}.
$$

Moreover, \eqref{eq:module-selector-error} yields

$$
-\varepsilon P_{K^\perp}
\leq
P_{K^\perp}(S-T)P_{K^\perp}
\leq
\varepsilon P_{K^\perp},
$$

while the choice of $b$ gives

$$
-\gamma P_K
\leq
P_K(S-T)P_K
\leq
\gamma P_K.
$$

Combining these four estimates and using $\gamma\leq1$, we obtain

$$
-\varepsilon P_{K^\perp}-4\sqrt{\gamma}\,I_{\mathcal H}
\leq
S-T
\leq
\varepsilon P_{K^\perp}+4\sqrt{\gamma}\,I_{\mathcal H}.
$$

Therefore,

$$
-\varepsilon P_{K^\perp}-4\sqrt{\gamma}\,I_{\mathcal H}
\leq
\frac1{2^\beta}\sum_{i\in I'}T_i-T
\leq
\varepsilon P_{K^\perp}+4\sqrt{\gamma}\,I_{\mathcal H},
$$
with
$$
\#(I'\cap I_k)\leq1,
\qquad 1\leq k\leq K.
$$
This completes the proof.

\end{proof}

\begin{Lemma}[Preliminary discretization construction]
Let $\mathcal{H}$ be a Hilbert $\mathcal{A}$-module and let $\Psi:X\to\mathcal{H}$ be a continuous frame with upper frame bound $B$. Assume that

$$
\|\Psi(t)\|^2\leq D
\qquad\text{$\mu$-a.e. on }X,
$$

and that $\mu(X)=\infty$. Suppose, in addition, that the closed submodules occurring in the construction below are orthogonally complemented.

Then, for every sufficiently small $\varepsilon>0$, there exists a sequence of distinct points
$$
\{x_n\}_{n\in\mathbb{N}}\subset X
$$
such that the following properties hold:
\begin{enumerate}
\item[\rm(a)]
\(    \|\Psi(x_n)\|^2\leq D,
    \qquad n\in\mathbb{N}.
   \)
\item[\rm(b)]
There exists $\beta\in\mathbb{N}$ satisfying
\[
2^{-\beta}
<
\frac{\varepsilon^2}{16BC^2D}
\leq
2^{-\beta+1},
\]
such that
\[
\left\|
S_\Psi
-
2^{-\beta}
\sum_{n\in\mathbb{N}}
T_{\Psi(x_n)}
\right\|
<\varepsilon.
\]

\item[\rm(c)]
If $(X,d,\mu)$ is a metric measure space satisfying
assumptions {\rm(II)} and {\rm(III)}, then there exists
\[
R
:=
\min\left\{
\frac{1}{2}
\frac{\varepsilon^2}{32\alpha BC^2D}
\frac{1}{\gamma},
R_A
\right\}
\]
such that
\[
|\left(
B_{R/2}(x_n)
\cap
\{x_k:k\in\mathbb{N}\}
\right)|
\leq
2(C_{R_A})^5,
\qquad n\in\mathbb{N}.
\]
\end{enumerate}
\end{Lemma}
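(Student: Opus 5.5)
The construction follows the Hilbert-space scheme of \cite{bownik2026uniform}, with Lemmas~\ref{LM1}, \ref{LM2} and~\ref{LM} supplying the module-valued estimates. Fix $\varepsilon$ small and normalise by setting $\widetilde\Psi=B^{-1/2}\Psi$, so that $S_{\widetilde\Psi}\leq I_{\mathcal H}$ and $\|\widetilde\Psi(t)\|^2\leq D/B$. This normalisation is what makes hypothesis (ii) of Lemma~\ref{LM} available. Each rank-one operator $T_{\Psi(x)}=\theta_{\Psi(x),\Psi(x)}$ is then positive and compact, and its trace is controlled by $\operatorname{tr}_{\mathcal A}(T_{\Psi(x)})\lesssim \tau_{\mathcal A}(1_{\mathcal A})D$. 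This gives the parameter $\delta\simeq D$ in Lemma~\ref{LM}, which explains the quantity $\varepsilon^2/(16BC^2D)$ in (b).

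\textbf{Step 1: decomposing $X$.} I would first exhaust $X$ by an increasing sequence of bounded sets $X_n$. This is possible because $\sigma$-finiteness and (A1) give $\mu(X)=\infty$ with finite-measure balls. Next I would choose a finitely generated complemented submodule $K_n$ so that $\operatorname{tr}_{\mathcal A}(P_{K_n^\perp}S_{X_n}P_{K_n^\perp})$ is small. Each layer $X_n\setminus X_{n-1}$ is then split into cells $X_{n,j,k}$ of dyadic measure $2^{-\ell_{n,j,k}}$. The cells are taken with diameter small enough that $\|\Psi(t)-\Psi(x_{n,j,k})\|\leq \frac16 D^{-1/2}2^{-n}\varepsilon$, which relies on the nonatomic hypothesis and on Lusin-type continuity of $\Psi$ off a small set. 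The leftover pieces form $X'$, where $\Psi$ is not controlled, and $X''$, the dyadic remainders, and their measures are arranged to satisfy Lemma~\ref{LM1}. Lemmas~\ref{LM1} and~\ref{LM2} then show that $S_\Psi$ differs by at most $\varepsilon$ in norm from the weighted sum $\sum 2^{-\ell_{n,j,k}}T_{\Psi(x_{n,j,k})}$.

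\textbf{Step 2: selecting the points.} On each layer $n$ I apply Lemma~\ref{LM} with $T$ equal to the layer sum, the blocks $I_k$ grouping the cells within each ball of radius $R$, and $K=K_{n-1}$. Two estimates make this work. Lemma~\ref{LM} gives one selected point per block, with the block bound $\sum_{i\in I_k}2^{-\ell_i}\leq \varepsilon^2/(2C^2\delta)$ guaranteed by (A3), since $\mu(B_R)\leq C_AR^\gamma$ together with the choice of $R$ in (c). Second, the trace $\gamma_n$ of the layer against $P_{K_{n-1}}$ is small, of order $4^{-n}\varepsilon^2$, because the layers leave the support captured by earlier submodules. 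The selected points from layer $n$ then satisfy
\[
-\varepsilon P_{K^\perp}-4\sqrt{\gamma_n}\,I_{\mathcal H}\le 2^{-\beta}\textstyle\sum T_{\Psi(x)}-T_n\le \varepsilon P_{K^\perp}+4\sqrt{\gamma_n}\,I_{\mathcal H}.
\]
Summing over $n$ and using that the projections $P_{K_n^\perp}$ act on essentially orthogonal pieces gives (b), once the constants are renormalised; this renormalisation is where the factor $16$ enters. Property (a) holds because the chosen points already satisfy it.

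\textbf{Step 3: separation.} For (c), I would count the points inside $B_{R/2}(x_n)$. Each point comes from a distinct block, meaning a distinct $R$-ball of the covering. The small-scale doubling condition (A2), applied a bounded number of times (about five doublings), bounds the number of covering balls meeting $B_{R/2}(x_n)$ by a power of $C_D$. Each layer may contribute, but only the current layer and its neighbour can meet a given ball. These two facts together give the bound $2C_{R_A}^5$.

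The main obstacle is Step 2's interaction between layers. In the module setting we need $\operatorname{tr}_{\mathcal A}(P_KTP_K)$ to be small and $P_K$ to exist. I would handle this by choosing $K_n$ as the span of finitely many eigenvectors from Theorem~\ref{TED} for the compact operator $S_{X_n}$, which is compact by its trace-type bound. This span is orthogonally complemented by the standing hypothesis, and the tail eigenvalues have small $\tau_{\mathcal A}$-sum.
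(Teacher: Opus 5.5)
Your outline (dyadic cells, Lemmas~\ref{LM1} and~\ref{LM2}, then Lemma~\ref{LM} layer by layer with $R$-ball blocks) follows the paper's sketch. However, the step that glues the layers together fails.

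On layer $n$ you take $K=K_{n-1}$, a finitely generated submodule, so the $\varepsilon$-part of the error is $\varepsilon P_{K_{n-1}^\perp}$. These projections are complements of finitely generated submodules. They are therefore large and heavily overlapping, not ``essentially orthogonal''. Indeed, $\sum_{n=1}^{N}P_{K_{n-1}^\perp}$ acts as $N\,I_{\mathcal H}$ on $(K_0+\cdots+K_{N-1})^\perp$, which is nonzero as soon as $\mathcal H$ is not finitely generated. Its norm is thus at least $N$, so summing your layer estimates gives an unbounded error bound and (b) does not follow.

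Nothing in your construction confines the layer-$n$ error to a small window either. The selected and full layer sums have ranges in the $\mathcal A$-span of the $\Psi(x)$ for the points of that layer. Your $K_n$, built from eigenvectors of the integral operator $S_{X_n}$, need not contain these ranges.

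The missing idea is the two-sided localization recorded in Remark~\ref{MR}, which is what the paper's appeal to the scalar argument supplies. It proceeds in three steps:
\begin{enumerate}
\item Let $\mathcal E_{k+1}$ be the finitely generated submodule generated by $P_{(\mathcal E_1\oplus\cdots\oplus\mathcal E_k)^\perp}T_{\Psi(x_n)}(\mathcal H)$ over the finitely many points already used.
\item Only \emph{afterwards} choose where the next layer ends, so that the remaining tail has trace at most $\eta_{k+1}$ (of order $4^{-k}\varepsilon^2$) on $\mathcal E_1\oplus\cdots\oplus\mathcal E_{k+1}$; see \eqref{3.17}.
\item Then layer $k$ has range in $\mathcal E_1\oplus\cdots\oplus\mathcal E_{k+2}$ by \eqref{3.20}. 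Apply Lemma~\ref{LM} with $K=\mathcal M_k=(\mathcal E_{k+1}\oplus\mathcal E_{k+2})^\perp$; here it is $K^\perp$, not $K$, that is finitely generated.
\end{enumerate}
With this choice $\gamma_k\leq\eta_k$, and the $\varepsilon$-errors sit on windows satisfying $\sum_kP_{\mathcal E_{k+1}\oplus\mathcal E_{k+2}}=2I_{\mathcal H}$ by \eqref{3.23}. The total error is then $2\varepsilon+4\sum_k\sqrt{\eta_k}=O(\varepsilon)$.

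This also shows that your smallness of $\gamma_n$ is not automatic ``because the layers leave the support captured by earlier submodules''. It holds only because the layer boundary is chosen after the submodule is fixed. You instead fix the exhaustion $X_n$ first and derive $K_n$ from $S_{X_n}$, which is circular if $K_{n-1}$ is to control the layer following $X_{n-1}$.

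The factor $2$ in (c) comes from the same bookkeeping. The layers are cut along the cells $X_j$ (see \eqref{3.16} and \eqref{3.26}), so each cell meets at most two consecutive layers. With your geometric layers, the claim that only neighbouring layers meet a ball requires the layers to be at least $R$ thick, which you do not arrange.
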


\begin{proof}
The construction follows the discretization argument of the scalar case, with the operator-selection step replaced by the Hilbert $\mathcal{A}$-module results established above.

First, since $\mu(X)=\infty$ and $\|\Psi(t)\|^2\leq D$ almost everywhere, one can choose a sequence of distinct points ${x_n}_{n\in\mathbb N}$ outside the corresponding null set, which gives {\rm(a)}.
Choose $\beta\in\mathbb N$ such that
$$
2^{-\beta}
<
\frac{\varepsilon^2}{16BC^2D}
\leq
2^{-\beta+1}.
$$
The construction of the measurable sets used in the discretization provides a decomposition of the relevant part of $X$ into sets
$$
\{X_{n,j,k}\}_{n,j,k}\quad\text{with}\quad
\mu(X_{n,j,k})=2^{-\ell_{n,j,k}},
$$

together with points
$x_{n,j,k}\in X_{n,j,k}$ satisfying
$$
\|\Psi(x_{n,j,k})\|^2\leq D.
$$
The local approximation estimate of Lemma~\ref{LM1} and the global estimate of Lemma~\ref{LM2} give the required approximation before the selection step.

We then apply Lemma~\ref{LM}, which is the Hilbert $\mathcal{A}$-module analogue of the corresponding trace-class selection result. It provides the required binary selector while
preserving the block constraint imposed by the partition of the sets $X_{n,j,k}$. Consequently, after collecting the selected points and denoting them by ${x_n}_{n\in\mathbb N}$, we obtain

$$
\left\|
S_\Psi
-
2^{-\beta}
\sum_{n\in\mathbb N}
T_{\Psi(x_n)}
\right\|
<\varepsilon,
$$

which proves {\rm(b)}.

Finally, the geometric part of the construction is unchanged by the
passage from Hilbert spaces to Hilbert $\mathcal{A}$-modules. The block
constraint in Lemma~\ref{LM}, together with assumptions in definition~\ref{AD}, yields the stated local counting estimate. Hence there
exists

$$
R
=
\min\left\{
\frac{1}{2}
\frac{\varepsilon^2}{32\alpha BC^2D}
\frac{1}{\gamma},
R_A
\right\}
$$

such that

$$
|\left(
B_{R/2}(x_n)\cap\{x_k:k\in\mathbb N\}
\right)|
\leq 2(C_{R_A})^5
$$

for every $n\in\mathbb N$.
\end{proof}

\begin{Theorem}[Uniform discretization for Hilbert $\mathcal{A}$-module frames]
Let $\mathcal{H}$ be a Hilbert $\mathcal{A}$-module and let

$$
\Psi:X\longrightarrow\mathcal{H}
$$

be a continuous frame with upper frame bound $B$. Assume that $(X,d,\mu)$ is a metric measure space satisfying the assumptions in the definition~\ref{AD} and that
$$
\|\Psi(t)\|^2\leq D
\qquad\text{$\mu$-a.e. on }X.
$$

Assume moreover that the closed submodules arising in the construction are orthogonally complemented and that the $C^*$-algebra is finite dimensional.

Then there exists a constant $C_1>0$, depending only on the doubling
constant of $X$, such that, for every sufficiently small
$\varepsilon>0$, there exist a subset $I\subset\mathbb{N}$ and a
sequence of distinct points $
\{x_n\}_{n\in I}\subset X
$ satisfying the following properties:
\begin{enumerate}
\item[\rm(a)]
There exists $\beta\in\mathbb{N}$ such that
\(    2^{-\beta}
    <
    \frac{\varepsilon^2}{C_1BD}
    \leq
    2^{-\beta+1},
   \)
and there exists a positive integer $L$ satisfying
\(    1\leq L\leq 2(C_{R_A})^5
   \)
such that
\[    \left\|
    S_\Psi
    -
    2^{\,2L-\beta}
    \sum_{n\in I}
    T_{\Psi(x_n)}
    \right\|
    <\varepsilon .
\]
\item[\rm(b)]
There exists
\[
r:=
\min\left\{
\frac18
\frac{\varepsilon^2}{2\alpha C_1BD}
\frac1\gamma,
\frac{R_A}{4}
\right\}
\]
such that
\[
\inf_{\substack{n,m\in I\\ n\neq m}}
d(x_n,x_m)\geq r.
\]
\end{enumerate}
\end{Theorem}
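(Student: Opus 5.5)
The plan is to start from the Preliminary discretization construction lemma and thin its output. That lemma gives points $\{x_n\}_{n\in\mathbb N}$ with $\|\Psi(x_n)\|^2\le D$ and
\[
\Bigl\|S_\Psi-2^{-\beta_0}\sum_{n}T_{\Psi(x_n)}\Bigr\|<\varepsilon_0,
\]
together with the local counting bound $|B_{R/2}(x_n)\cap\{x_k\}|\le 2(C_{R_A})^5$. I will apply it with $\varepsilon_0=\varepsilon/2$, so that $\beta_0$ is fixed by $2^{-\beta_0}<\varepsilon^2/(64BC^2D)\le 2^{-\beta_0+1}$. This fixes $C_1$ up to the doubling-dependent factor introduced below.

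The points $x_n$ may repeat or cluster at scale $R/2$. I therefore pass from the clustered family to a separated one by a second, coloring-type selection. Let $M:=2(C_{R_A})^5$. Build a graph on $\mathbb N$ by joining $n\sim m$ whenever $d(x_n,x_m)<r$, with $r\le R/4$ as in (b). By the counting estimate in part (c) of the preliminary lemma, every vertex has degree at most $M-1$. A greedy (Zorn-type, since the graph is countable and locally finite) coloring then partitions $\mathbb N$ into at most $M$ classes, each of which is $r$-separated.

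To keep operator control, I will not pick a single color class naively. Instead I apply the selection Lemma~\ref{LM}, equivalently Corollary~\ref{mainco}, with the blocks $I_k$ taken to be the (finite, after truncation) clusters of points within distance $r$. Each cluster has at most $M$ elements, so we may choose $L$ with $2^{L-1}<M\le 2^{L}$, which gives $1\le L\le M$. Padding the clusters with phantom zero operators to size $2^{2L}$ allows $2L$ binary halvings. Each halving keeps one representative per pair, and after these halvings every block contributes at most one index; this is exactly the Step~1 combinatorics of Lemma~\ref{LM}. The trace hypothesis needed for Lemma~\ref{LM} comes from $\operatorname{tr}_{\mathcal A}(T_{\Psi(x)})\le \tau_{\mathcal A}(1)D$, which uses the finite dimensionality of $\mathcal A$ and the faithful trace of Example~\ref{E}. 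With this input, Corollary~\ref{mainco} yields
\[
\Bigl\|2^{2L}\sum_{n\in I}2^{-\beta_0}T_{\Psi(x_n)}-2^{-\beta_0}\sum_n T_{\Psi(x_n)}\Bigr\|\le C\sqrt{2^{2L-\beta_0}D}.
\]
Choosing $\beta=\beta_0$ (after adjusting $C_1$ to absorb $2^{2L}\le 4M^2$, which depends only on $C_D$) makes the right-hand side less than $\varepsilon/2$. The triangle inequality then gives (a).

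For (b), the surviving indices contain at most one point per cluster, so any two survivors lie at distance at least $r$. The stated formula for $r$ is $R/4$ with $R$ from the preliminary lemma after the rescaling of $\varepsilon$, which produces the factor $1/8$ and the $R_A/4$ term. The passage from finite truncations to the whole countable family uses the same diagonal/pigeonhole limit as in Theorem~\ref{IT}.

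I expect the main obstacle to be the middle step. There one must verify that the binary selection can be arranged so that each $r$-cluster is reduced to a single point while the $\sqrt{2^N\delta}$ error stays below $\varepsilon$. This requires the trace bound per operator to be small relative to the weights. I would first try to secure it by shrinking the weights $2^{-\beta}$ (that is, enlarging $C_1$), so that the effective $\delta$ in Corollary~\ref{mainco} is $2^{-\beta}\tau_{\mathcal A}(1)D$.
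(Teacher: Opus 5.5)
\paragraph{The gap: separation.}
Your argument fails at the separation step. You impose only the constraint ``at most one selected index per block'', with blocks $I_k$ being ``clusters of points within distance $r$'' of size at most $M$. In (b) you then conclude that the survivors are $r$-separated. Since $r$-closeness is not transitive, no partition does both jobs.

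For ``one per block'' to force $d(x_n,x_m)\ge r$, every pair with $d(x_n,x_m)<r$ must lie in a common block. So the blocks must be unions of connected components of your graph. Nothing in part (c) of the preliminary lemma rules out long chains of points with consecutive distances just below $r$ (think of points on a line), so components can be infinite. Then padding to $2^{2L}$ with $L\le M$ is impossible, and keeping one point per component cannot approximate $S_\Psi$.

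If instead the blocks are bounded pieces (e.g.\ the points lying in cells of diameter $<R/2$), each has at most $M$ elements, but survivors in neighbouring cells can be arbitrarily close. The vertex colouring you construct is never used, and it does not plug into a pair-based selector in any case.

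\paragraph{The missing idea.}
You need to split every close pair, not every block. Your graph has maximal degree at most $M-1$, so a greedy edge colouring along an enumeration splits its edges into at most $2M-3$ matchings. At the $j$-th stage of the recursion in Corollary~\ref{mainco}, choose the pairing of the current set $I_b$ to contain every edge of the $j$-th matching lying in $I_b$. Complete it arbitrarily, or with zero phantom operators.

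Theorem~\ref{IT2} sends the two members of each pair to different halves. So after $N\le 2M-3$ stages no edge survives in any $I_b$; pad with extra stages so that $N=2L$. The bound $C\sqrt{2^N\delta}$ is unaffected. This is the kind of ``at most $M$ successive pair-splitting steps'' that the paper's proof appeals to, not the one-per-block combinatorics of Step~1 of Lemma~\ref{LM}.

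\paragraph{Bookkeeping to repair.}
Corollary~\ref{mainco} requires $\sum_i T_i\le I_{\mathcal H}$. But $2^{-\beta_0}\sum_n T_{\Psi(x_n)}\approx S_\Psi$ can have norm near $B$. You must rescale by $1/(2B)$, as the paper does via $\Psi'=\Psi/\sqrt{2^{\beta+1}B}$. Your error then becomes $C\sqrt{2B\,2^{N-\beta}\tau_{\mathcal A}(1)D}$.

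Taking $\varepsilon_0=\varepsilon/2$ ties $\beta_0$ to the constant $64C^2$, so you cannot both set $\beta=\beta_0$ and enlarge $C_1$. Take $\varepsilon_0=4C\varepsilon/\sqrt{C_1}$ instead; this makes both $\beta$ and $r=R/4$ coincide with the statement.

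Finally, your (correct) trace bound $\operatorname{tr}_{\mathcal A}(T_{\Psi(x)})\le\tau_{\mathcal A}(1)D$ makes $C_1$ depend on $\tau_{\mathcal A}(1)$, not only on the doubling constant. The paper sidesteps this by writing $\operatorname{tr}(T_{\Psi'(x_n)})=\|\Psi(x_n)\|^2/(2^{\beta+1}B)$, which ignores the factor coming from $\tau_{\mathcal A}$.
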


\begin{proof}
Let $\varepsilon>0$ be sufficiently small and set

$$
M:=2(C_{R_A})^5,
\qquad
C_1:=4C^2\,2^{4M},
$$

where $C$ is the absolute constant appearing in Corollary~\ref{mainco}.
Choose $\beta\in\mathbb N$ such that

$$
2^{-\beta}
<
\frac{\varepsilon^2}{C_1BD}
\leq
2^{-\beta+1},
$$

and set

$$
r:=
\min\left\{
\frac18\frac{\varepsilon^2}{2\alpha C_1BD}\frac1\gamma,
\frac{R_A}{4}
\right\}.
$$

By Lemma~\ref{LM}, there exists a sequence of distinct points
$\{x_n\}_{n\in\mathbb N}\subset X$ such that

$$
\|\Psi(x_n)\|^2\leq D,
\qquad n\in\mathbb N,
$$

and
\begin{equation}\label{eq:preliminary-approximation}
\left\|
S_\Psi-
2^{-\beta}\sum_{n\in\mathbb N}T_{\Psi(x_n)}
\right\|
<
\frac{\varepsilon}{2^{4M}},
\end{equation}
while
\begin{equation}\label{eq:local-counting}
|\left(
B_{2r}(x_n)\cap{x_k:k\in\mathbb N}
\right)|
\leq M,
\qquad n\in\mathbb N.
\end{equation}
Here we use Lemma~\ref{LM} only for the preliminary construction; the
subsequent selection argument is purely the binary selection procedure.

Define

$$
\Psi'
:=
\frac{\Psi}{\sqrt{2^{\beta+1}B}},
$$

and let $T$ denote the frame operator associated with the discrete
family ${\Psi'(x_n)}_{n\in\mathbb N}$. Thus

$$
T
=
\sum_{n\in\mathbb N}T_{\Psi'(x_n)}
=
\frac{1}{2B}
\left(
2^{-\beta}\sum_{n\in\mathbb N}
T_{\Psi(x_n)}
\right).
$$

Moreover,

$$
\operatorname{tr}\bigl(T_{\Psi'(x_n)}\bigr)
=
\frac{\|\Psi(x_n)\|^2}{2^{\beta+1}B}
\leq
\frac{D}{2^{\beta+1}B}.
$$

By the choice of $\beta$,

$$
\operatorname{tr}_{\mathcal{A}}\bigl(T_{\Psi'(x_n)}\bigr)
\leq
\frac{\varepsilon^2}{2C_1B^2}.
$$

Furthermore, \eqref{eq:preliminary-approximation} gives

$$
\left\|2BT-S_\Psi\right\|<\varepsilon.
$$

Since

$$
A I_{\mathcal H}\leq S_\Psi\leq B I_{\mathcal H},
$$

we consequently have

$$
\frac{A-\varepsilon}{2B}I_{\mathcal H}
\leq T
\leq
\frac{B+\varepsilon}{2B}I_{\mathcal H}.
$$

In particular, for $\varepsilon$ sufficiently small,

$$
T\leq I_{\mathcal H}.
$$

We may therefore apply Corollary~\ref{mainco} to the family
$\{T_{\Psi'(x_n)}\}_{n\in\mathbb N}$, together with the pairing induced
by the partition used in Lemma~\ref{LM}. At each binary stage, the
selected family contains at most one element from each prescribed pair.
After at most $M$ successive selection steps, we obtain a subset
$I\subset\mathbb N$ and an integer $1\leq L\leq M$ such that
$$
\inf_{\substack{n,m\in I\\n\neq m}}d(x_n,x_m)\geq r,
$$
and
\begin{equation}\label{eq:selected-error-normalized}
\left\|
2^{2L} \sum_{n\in I}T_{\Psi'(x_n)}-T
\right\|
<\frac{\varepsilon}{2B}.
\end{equation}
The latter estimate follows by the same telescoping argument as in the
scalar case; see the article~\cite{bownik2026uniform}; the only selection input required at each stage is
Corollary~\ref{mainco}. In particular, no additional operator-selection argument is needed here.
Since
$$
T_{\Psi'(x_n)}
=
\frac{1}{2^{\beta+1}B}T_{\Psi(x_n)},
$$
multiplying \eqref{eq:selected-error-normalized} by $2B$ gives
$$
\left\|
2^{2L-\beta}
\sum_{n\in I}T_{\Psi(x_n)}
-
2^{-\beta}\sum_{n\in\mathbb N}T_{\Psi(x_n)}
\right\|
<\varepsilon.
$$
Combining this estimate with \eqref{eq:preliminary-approximation} and using the triangle inequality yields
$$
\left\|
S_\Psi-
2^{2L-\beta}
\sum_{n\in I}T_{\Psi(x_n)}
\right\|
<\varepsilon.
$$
This proves {\rm(a)}, while the construction above gives {\rm(b)}.
Finally, the approximation estimate immediately yields the frame
bounds for the selected family. Indeed,
$$
\left\|
S_\Psi-
2^{2L-\beta}
\sum_{n\in I}T_{\Psi(x_n)}
\right\|
<\varepsilon
$$
and
$$
A I_{\mathcal H}\leq S_\Psi\leq B I_{\mathcal H}
$$
imply
$$
(A-\varepsilon)I_{\mathcal H}
\leq
2^{2L-\beta}
\sum_{n\in I}T_{\Psi(x_n)}
\leq
(B+\varepsilon)I_{\mathcal H}.
$$

Equivalently,

$$
2^{\beta-2L}(A-\varepsilon)I_{\mathcal H}
\leq
\sum_{n\in I}T_{\Psi(x_n)}
\leq
2^{\beta-2L}(B+\varepsilon)I_{\mathcal H}.
$$

Hence ${\Psi(x_n)}_{n\in I}$ is a frame for $\mathcal H$ with frame
bounds

$$
2^{\beta-2L}(A-\varepsilon)
\quad\text{and}\quad
2^{\beta-2L}(B+\varepsilon).
$$

In particular, its frame ratio satisfies

$$
\frac{B+\varepsilon}{A-\varepsilon},
$$

which converges to $B/A$ as $\varepsilon\to0$.
\end{proof}

\begin{Example}[A continuous matrix-valued frame]\label{EF}
Let $\mathcal A=M_m(\mathbb C)$ and consider the Hilbert $\mathcal A$-module $\mathcal H=L^2(\mathbb R^d;\mathcal A)$
equipped with the $\mathcal A$-valued inner product
\[
\langle f,g\rangle_{\mathcal A}
=
\int_{\mathbb R^d} f(t)^*g(t)\,dt .
\]

Let $X=M_d(\mathbb R)_+\times\mathbb R^d$ be endowed with the product measure $d\mu(A,x)=d\nu(A)\,dx,$
where $\nu$ is the measure on $M_d(\mathbb R)_+$ considered in Example~\ref{E1}. We use on $M_d(\mathbb R)$ the Frobenius metric and the corresponding product metric on $X$.

Choose functions
\[
h\in C_c(M_d(\mathbb R)_+),\qquad
g\in C_c(\mathbb R^d),
\]
such that
\[
\int_{M_d(\mathbb R)_+}|h(A)|^2\,d\nu(A)=1,
\qquad
\int_{\mathbb R^d}|g(t)|^2\,dt=1.
\]
Define $\Psi:X\longrightarrow\mathcal H$ by
\[
\Psi(A,x)(t)=h(A)g(t-x)I_m,
\qquad (A,x)\in X.
\]

For $f\in\mathcal H$, we have
\[
\langle f,\Psi(A,x)\rangle_{\mathcal A}
=
\overline{h(A)}
\int_{\mathbb R^d}f(t)^*g(t-x)\,dt .
\]
Consequently,
\begin{align*}
&\int_X
\langle f,\Psi(A,x)\rangle_{\mathcal A}
\langle\Psi(A,x),f\rangle_{\mathcal A}
\,d\mu(A,x)
\\
&=
\int_{M_d(\mathbb R)_+}|h(A)|^2\,d\nu(A)
\int_{\mathbb R^d}
\left(
\int_{\mathbb R^d}f(t)^*g(t-x)\,dt
\right)
\left(
\int_{\mathbb R^d}\overline{g(s-x)}f(s)\,ds
\right)dx .
\end{align*}
Also, we have
\[
\int_{\mathbb R^d}
\left(
\int_{\mathbb R^d}f(t)^*g(t-x)\,dt
\right)
\left(
\int_{\mathbb R^d}\overline{g(s-x)}f(s)\,ds
\right)dx
=
\|g\|_2^2
\int_{\mathbb R^d}f(t)^*f(t)\,dt .
\]
Since $\|h\|_{L^2(M_d(\mathbb R)_+)}=\|g\|_2=1$, it follows that
\[
\int_X
\langle f,\Psi(A,x)\rangle_{\mathcal A}
\langle\Psi(A,x),f\rangle_{\mathcal A}
\,d\mu(A,x)
=
\langle f,f\rangle_{\mathcal A}.
\]
Hence $\Psi$ is a Parseval continuous frame for
\[
L^2(\mathbb R^d;M_m(\mathbb C)).
\]

Moreover,
\[
\|\Psi(A,x)\|_{\mathcal H}
=
|h(A)|\,\|g\|_2\,\|I_m\|_{M_m(\mathbb C)}
=
|h(A)|,
\]
so that $\|\Psi(A,x)\|_{\mathcal H}^2
\leq
D$ for $D=\|h\|_\infty^2.$

Since $h$ and $g$ are continuous, the mapping $(A,x)\longmapsto\Psi(A,x)$ is continuous from $X$ into $\mathcal H$.

Thus this example satisfies the boundedness and continuity hypotheses required for the uniform discretization theorem. In particular, for sufficiently small $\varepsilon>0$, that theorem provides a uniformly discrete sequence
\[
\{(A_n,x_n)\}_{n\in I}\subset
M_d(\mathbb R)_+\times\mathbb R^d
\]
such that the discrete family
\[
\{\Psi(A_n,x_n)\}_{n\in I}
\]
has a frame operator approximating the continuous frame operator in operator norm.
Since the continuous frame is Parseval, the resulting discrete family is a Hilbert $\mathcal A$-module frame with frame bounds arbitrarily close to $1$ as $\varepsilon\to0$.
\end{Example}

\section*{Appendix: Technical Extensions from Hilbert Spaces to Hilbert \(C^*\)-Modules}

This proposition does not require the Hilbert module structure; it depends only on the norm vector spaces.

\begin{Proposition}\label{D}
    Let $\psi :X\to \mathcal{H}$ be a continuous Bessel map. For all $\epsilon>0$ there exists a measurable partition $(X_j)_{j\in J}$ of $X$ and $(t_j)_{j\in J}$ such that $t_j\in X_j$ for all $j\in J$ and we have for all $t_j\in X_j$
    $$\|\psi(t)-\psi(t_j) \|<\epsilon,\;\text{and}\;
        \int_{X}\|\psi(t)-\sum_{j\in J}\psi(t_j)\mathbf {1}_{X_j}(t)\|d\mu(t)<\epsilon.$$
\end{Proposition}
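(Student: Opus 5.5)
The plan is a countable partition of $X$ into small measurable pieces, assembled from a $\sigma$-finite exhaustion, in the spirit of the scalar argument of~\cite{freeman2019discretization}. I will assume, as is implicit in the statement, that $X$ is $\sigma$-finite and that $\psi$ is strongly measurable and essentially separably valued. The latter is automatic when $\mathcal H$ is countably generated over a finite-dimensional $\mathcal A$, since then $\mathcal H$ is norm-separable. I will also assume that $\psi$ is norm-bounded on a conull set, say $\|\psi(t)\|\le C$; in the application $\psi$ is a bounded frame. The first requirement, $\|\psi(t)-\psi(t_j)\|<\epsilon$ for $t\in X_j$, is purely about norms in $\mathcal H$. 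The second requirement is an $L^1$-type estimate, and it will follow from the first once the partition is weighted correctly.

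\textbf{Step 1.} Write $X=\bigcup_{n\in\mathbb N}Y_n$ with the $Y_n$ disjoint and $0<\mu(Y_n)<\infty$, after discarding a null set $N$ off which $\psi$ is bounded and takes values in a separable closed subspace $\mathcal H_0\subseteq\mathcal H$. Fix a dense sequence $(h_m)_m$ in $\mathcal H_0$ and set
\[
\eta_n:=\min\Bigl\{\epsilon,\ \frac{\epsilon\,2^{-n-1}}{1+\mu(Y_n)}\Bigr\}.
\]

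\textbf{Step 2.} On each $Y_n$, define the sets
\[
Y_{n,m}:=\Bigl\{t\in Y_n\setminus N:\ \|\psi(t)-h_m\|<\tfrac{\eta_n}{2}\Bigr\}\setminus\bigcup_{m'<m}Y_{n,m'}.
\]
These sets are measurable by strong measurability, and they cover $Y_n\setminus N$ by density. Discard the empty ones. In each remaining $Y_{n,m}$ pick a point $t_{n,m}\in Y_{n,m}$. By the triangle inequality, every $t\in Y_{n,m}$ satisfies $\|\psi(t)-\psi(t_{n,m})\|<\eta_n\le\epsilon$. The null set $N$ is adjoined to any one of the pieces, or treated as its own piece with an arbitrary point. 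Either way it does not affect the integral, and the pointwise bound is only required almost everywhere. Re-index the pieces as $(X_j)_{j\in J}$ with chosen points $(t_j)_{j\in J}$; this gives a countable measurable partition.

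\textbf{Step 3.} Since $\sum_j\psi(t_j)\mathbf 1_{X_j}(t)=\psi(t_j)$ for $t\in X_j$, we get
\[
\int_X\Bigl\|\psi(t)-\sum_{j}\psi(t_j)\mathbf 1_{X_j}(t)\Bigr\|\,d\mu(t)\le\sum_n\eta_n\,\mu(Y_n)\le\sum_n\epsilon\,2^{-n-1}<\epsilon .
\]
The integrand is measurable as a countable piecewise combination of measurable functions. The strict inequality in the pointwise bound holds everywhere off $N$.

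\textbf{Main obstacle.} The delicate point is measurability: the proposition only assumes weak measurability, that is, that $\omega\mapsto\langle x,\psi(\omega)\rangle_{\mathcal A}$ is measurable. To pass to strong measurability I would use the Pettis-type argument. Norm-separability of $\mathcal H$ gives countably many $\mathcal A$-valued functionals that determine the norm, namely $\|y\|=\sup_k\|\langle y,u_k\rangle\|$ over a dense set $(u_k)$ in the unit ball. This shows that $t\mapsto\|\psi(t)-h_m\|$ is measurable, which is exactly what Step 2 needs. If separability fails, I would instead add essential separability of the range as a hypothesis, since the scalar result in~\cite{freeman2019discretization} makes the same assumption. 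Boundedness of $\psi$ is not actually used in the estimate itself; it is only needed later, when the partition is fed into the frame argument.
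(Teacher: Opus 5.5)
Your proof is correct, but it takes a different route from the paper's.

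\textbf{The paper's route.} It fixes one diameter $\delta<\varepsilon$ and partitions the target module into countably many sets of diameter $<\delta$, which tacitly uses separability. It pulls these sets back to each finite-measure piece $X_n$ and truncates at $J(n)$ so that the leftover set $X'$ has small measure. It then refines the leftover pieces using nonatomicity of $\mu$, and finally appeals to ``the Bessel estimate'' for the integral bound.

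\textbf{Your route.} You let the radius $\eta_n$ shrink with both $n$ and $\mu(Y_n)$. The $L^1$ bound then follows directly from the pointwise one, since $\sum_n\eta_n\mu(Y_n)<\varepsilon/2$.

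\textbf{What your route gains.}
\begin{itemize}
\item You need neither nonatomicity, nor a bound on $\psi$, nor the tail bookkeeping.
\item More importantly, your $n$-dependent weighting is what actually delivers the integral estimate when $\mu(X)=\infty$. With a single $\delta$ for all $n$, the pointwise bound on the retained pieces only gives $\int_X\|\psi(t)-\sum_j\psi(t_j)\mathbf 1_{X_j}(t)\|\,d\mu(t)\le\delta\,\mu(X)$. That bound is useless in the paper's standing setting (A1), and making $\mu(X')$ small does not repair it.
\item You also supply the measurability step that the paper's pullback $\psi^{-1}(\mathcal E_{n,k})$ silently requires. This is the Pettis-type identity $\|y\|=\sup_k\|\langle y,u_k\rangle_{\mathcal A}\|$ over a countable dense subset of the unit ball.
\end{itemize}

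\textbf{What the paper's route is aimed at.} The nonatomic refinement produces pieces of prescribed small measure. That is the kind of structure used later, with $\mu(X_{n,j,k})=2^{-\ell_{n,j,k}}$, but it is not part of this statement.

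\textbf{One simplification.} When $\mathcal H$ is separable, as in the countably generated, finite-dimensional-$\mathcal A$ case, you can take $N=\emptyset$. Neither the bound on $\psi$ nor the null set enters your estimates. The pointwise inequality then holds for every $t\in X_j$, as the statement literally requires, rather than only almost everywhere.
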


\begin{proof}
Fix $\varepsilon>0$. Since $(X,\mu)$ is $\sigma$-finite, there exists a countable measurable partition

$$
X=\bigcup_{n\geq1}X_n,
\qquad
\mu(X_n)<\infty
\quad\text{for all }n\geq1.
$$

Choose $0<\delta<\varepsilon$ sufficiently small. For each $n\in\mathbb N$, consider a countable measurable partition $\mathcal E=\bigcup_{k\geq1}\mathcal E_{n,k}
$, such that $\operatorname{diam}(\mathcal E_{n,k})<\delta
\qquad\text{for every }k\geq1.
$
For each $n,k$, set $X_{n,k}
=
X_n\cap\psi^{-1}(\mathcal E_{n,k}).
$ Then, for every $t,s\in X_{n,k}$,

$$
\|\psi(t)-\psi(s)\|_{\mathcal E}<\delta.
$$
Since $\mu(X_n)<\infty$, we may choose $J(n)\in\mathbb N$ sufficiently large so that $\sum_{k>J(n)}\mu(X_{n,k})
<
\eta_n,
$ where $(\eta_n)_{n\geq1}$ is a sequence of positive numbers satisfying
$$
\sum_{n\geq1}\eta_n<\frac{\varepsilon}{2M},
$$

with $M$ chosen according to the Bessel bound of $\psi$.

Define

$$
X'
=
\bigcup_{n\geq1}\bigcup_{k>J(n)}X_{n,k}.
$$

The set $X'$ has sufficiently small measure.

For each $1\leq k\leq J(n)$, choose a point

$$
t_{n,k}\in X_{n,k}.
$$

Then, for every $t\in X_{n,k}$,

$$
\|\psi(t)-\psi(t_{n,k})\|_{\mathcal E}<\delta.
$$

It remains to treat the sets belonging to $X'$. Since $\mu$ is nonatomic, each set $X_{n,k}$ with $k>J(n)$ can be decomposed into countably many measurable subsets of arbitrarily prescribed sufficiently small positive measures. Hence we may write

$$
X_{n,k}
=
\bigcup_{\ell\geq1}X_{n,k,\ell},
$$

where the measures are chosen so that

$$
\sum_{\ell>L(n,k)}\mu(X_{n,k,\ell})
$$

is sufficiently small for suitable integers $L(n,k)$.

Choose a point

$$
t_{n,k,\ell}\in X_{n,k,\ell}
$$

for every resulting nonempty set. Since

$$
X_{n,k,\ell}\subseteq X_{n,k},
$$

we still have

$$
\|\psi(t)-\psi(t_{n,k,\ell})\|_{\mathcal E}<\delta,
\qquad
t\in X_{n,k,\ell}.
$$

Collecting all the sets obtained above gives a countable measurable partition

$$
X=\bigcup_{j\in J}X_j
$$

and points $t_j\in X_j$ such that

$$
\|\psi(t)-\psi(t_j)\|_{\mathcal E}<\delta<\varepsilon,
\qquad t\in X_j.
$$

Finally, by the choice of the parameters controlling the measures of the discarded pieces and by the Bessel estimate for $\psi$, we obtain

$$
\int_X
\left\|
\psi(t)-\sum_{j\in J}\psi(t_j)\mathbf 1_{X_j}(t)
\right\|_{\mathcal E}\,d\mu(t)
<\varepsilon.
$$
This proves the proposition.
\end{proof}
\begin{Remark}[Orthogonal decomposition in a Hilbert $C^*$-module]\label{MR}
Let $\mathcal E$ be a Hilbert $\mathcal A$-module and let
$\{T_n\}_{n\in\mathbb N}\subset \mathcal K_{\mathcal A}(\mathcal E)_+$.
Assume that the submodules constructed below are orthogonally complemented
in $\mathcal E$, so that the corresponding orthogonal projections belong to
$\mathcal L_{\mathcal A}(\mathcal E)$. Let
\[
K_0=0,\qquad K_1=1,\qquad \mathcal E_1=\{0\}.
\]

We construct inductively a sequence of mutually orthogonal finitely
generated Hilbert $\mathcal A$-submodules
$\{\mathcal E_k\}_{k\in\mathbb N}$ of $\mathcal E$.
Suppose that $\mathcal E_k$ and $K_k$ have already been defined for some
$k\geq 1$. We set
\begin{equation}
\mathcal E_{k+1}
=
\overline{\operatorname{span}_{\mathcal A}}
\left\{
P_{(\mathcal E_1\oplus\cdots\oplus\mathcal E_k)^\perp}
T_n(\mathcal E)
\,:\,
1\leq n\leq K_k
\right\}.
\label{3.15}
\end{equation}
We then choose $K_{k+1}\in\mathbb N$ sufficiently large such that
\begin{equation}
\{x_n:K_k+1\leq n\leq K_{k+1}\}
\not\subseteq X_j,
\qquad j\in\mathbb N,
\label{3.16}
\end{equation}
and
\begin{equation}
\operatorname{tr}_{\mathcal{A}}\left(
P_{\mathcal E_1\oplus\cdots\oplus\mathcal E_{k+1}}
\left(
\sum_{n=K_{k+1}+1}^{\infty}
2^{-\ell_n}T_n
\right)
P_{\mathcal E_1\oplus\cdots\oplus\mathcal E_{k+1}}
\right)
\leq \eta_{k+1},
\label{3.17}
\end{equation}
where
\begin{equation}
\eta_0=0,
\qquad
\eta_k
=
\min\left\{
(B-2)4^{-(k+2)}\varepsilon^2,\,1
\right\},
\qquad k\geq1.
\label{3.18}
\end{equation}

The choice of $K_{k+1}$ is possible since
the series
\[
\sum_{n\in\mathbb N}2^{-\ell_n}T_n
\]
converges in the strong operator topology and the compression to the
finitely generated submodule
$\mathcal E_1\oplus\cdots\oplus\mathcal E_{k+1}$
converges in operator norm.

For $k\geq0$, define
\begin{equation}
\mathcal M_k
=
(\mathcal E_{k+1}\oplus\mathcal E_{k+2})^\perp.
\label{3.19}
\end{equation}
It follows from the construction of $\mathcal E_k$ and the choice of
$K_k$ that the following properties hold.

\medskip
\noindent
\textbf{(A)} For every $1\leq n\leq K_{k+1}$,
\begin{equation}
T_n(\mathcal E)
\subseteq
\mathcal E_1\oplus\cdots\oplus\mathcal E_{k+2}.
\label{3.20}
\end{equation}

\medskip
\noindent
\textbf{(B)} By \textbf{(A)} and \eqref{3.17}, for every $k\geq0$,
\[
\begin{aligned}
&
\operatorname{tr}_{\mathcal{A}}\left(
P_{\mathcal M_k}
\left(
\sum_{n=K_{k+1}+1}^{\infty}
2^{-\ell_n}T_n
\right)
P_{\mathcal M_k}
\right)
\\
&\qquad =
\operatorname{tr}_{\mathcal{A}}\left(
P_{\mathcal E_1\oplus\cdots\oplus\mathcal E_k}
\left(
\sum_{n=K_{k+1}+1}^{\infty}
2^{-\ell_n}T_n
\right)
P_{\mathcal E_1\oplus\cdots\oplus\mathcal E_k}
\right)
\leq \eta_k.
\end{aligned}
\label{3.21}
\]
In particular,
\begin{equation}
P_{\mathcal M_k}
\left(
\sum_{n=K_{k+1}+1}^{K_{k+1}}
2^{-\ell_n}T_n
\right)
P_{\mathcal M_k}
=0,
\qquad k=0,1.
\label{3.22}
\end{equation}

\medskip
\noindent
\textbf{(C)} Since
\[
\left\{2^{-\ell_n/2}\Psi(x_n)\right\}_{n\in\mathbb N}
\]
is a frame for $\mathcal E$, its closed $\mathcal A$-linear span is
$\mathcal E$. Hence
\[
\mathcal E
=
\bigoplus_{k=1}^{\infty}\mathcal E_k,
\]
and consequently
\begin{equation}
\sum_{k=0}^{\infty}
P_{\mathcal E_{k+1}\oplus\mathcal E_{k+2}}
=
2I_{\mathcal E},
\label{3.23}
\end{equation}
where the series converges in the strong operator topology.

Finally, for each $k\geq0$, define
\begin{equation}
I_k
=
\{n\in\mathbb N:K_k+1\leq n\leq K_{k+1}\}.
\label{3.24}
\end{equation}
Let $i_{k,1}$ and $i_{k,2}$ be positive integers such that
\begin{equation}
x_{K_k+1}\in X_{i_{k,1}},
\qquad
x_{K_{k+1}}\in X_{i_{k,2}},
\label{3.25}
\end{equation}
respectively. We then partition $I_k$ into the subsets
\begin{equation}
\left\{
I_k\cap X_j:
i_{k,1}\leq j\leq i_{k,2}
\right\}.
\label{3.26}
\end{equation}
\end{Remark}


\section*{\small
 Conflict of interest} 

{\small
 The authors declare that they have no conflict of interest.}

\section*{\small
 Data availability} 

 {\small
 No data are associated with this article.}

\section*{\small
 Funding statement} 

 {\small
 This research received no external funding.}



\begin{thebibliography}{00}

\bibitem{ali2000coherent}Ali, S., Antoine, J., Gazeau, J. \& Others Coherent states, wavelets and their generalizations. (Springer,2000)

\bibitem{bownik2024selector}Bownik, M. Selector form of Weaver’s conjecture and frame sparsification. {\em Preprint}. (2024)


\bibitem{bownik2026akemann}Bownik, M. On Akemann-Weaver conjecture. {\em Advances In Mathematics}. \textbf{487} pp. 110772 (2026)

\bibitem{bownik2026uniform}Bownik, M. \& Yu, P. Uniform discretization of continuous frames. {\em ArXiv Preprint ArXiv:2603.10423}. (2026)


\bibitem{freeman2019discretization}Freeman, D. \& Speegle, D. The discretization problem for continuous frames. {\em Advances In Mathematics}. \textbf{345} pp. 784-813 (2019)

\bibitem{ghasemi2024continuous}Ghasemi, H. \& Lal Shateri, T. On Continuous Frames in Hilbert  C* -Modules. {\em Sahand Communications In Mathematical Analysis}. \textbf{21}, 327-349 (2024)

\bibitem{han2008characterizations}Han, J. \& Chen, Q. Characterizations of a class of orthogonal multiple vector-valued wavelet packets. {\em International Journal Of Wavelets, Multiresolution And Information Processing}. \textbf{6}, 631-641 (2008)

\bibitem{kaplansky1953modules}Kaplansky, I. Modules over operator algebras. {\em American Journal Of Mathematics}. \textbf{75}, 839-858 (1953)



\bibitem{lance1995hilbert}Lance, E. Hilbert C*-modules: a toolkit for operator algebraists. (Cambridge University Press,1995)




\bibitem{lindenhovius2015classifying}Lindenhovius, A. Classifying finite-dimensional C*-algebras by posets of their commutative C*-subalgebras. {\em International Journal of Theoretical Physics}. \textbf{54}, 4615-4635 (2015)


\bibitem{li2025notes}Li, J., Wu, K. \& Xu, Q. Notes on the numerical radius for adjointable operators on Hilbert  C* -modules. {\em ArXiv Preprint ArXiv:2502.20259}. (2025)

\bibitem{lfounoune2023continuous}Lfounoune, A., Karara, A. \& Rossafi, M. Continuous biframes in Hilbert  C*- modules. {\em Int. J. Anal. Appl}. (2025)



\bibitem{manuilov2005hilbert}Manuilov, V. \& Troitsky, E. Hilbert C*-modules. {\em Translations Of Mathematical Monographs}. \textbf{226} (2005)




\bibitem{rashidi2017controlled}Rashidi-Kouchi, M. \& Rahimi, A. On controlled frames in Hilbert C*-modules. {\em International Journal Of Wavelets, Multiresolution And Information Processing}. \textbf{15}, 1750038 (2017)

\bibitem{san2012matrix}San Antolin, A. \& Zalik, R. Matrix-valued wavelets and multiresolution analysis. {\em J. Appl. Funct. Anal}. \textbf{7}, 13-25 (2012)

\bibitem{takesaki2002operator}Takesaki, M. Operator Algebras and Non-commutative Geometry, 5. {\em Theory Of Operator Algebras}. (2002)
\bibitem{xia1996vector}Xia, X. \& Suter, B. Vector-valued wavelets and vector filter banks. {\em IEEE Transactions On Signal Processing}. \textbf{44}, 508-518 (1996)
\end{thebibliography}
\end{document}